\documentclass[12pt]{amsart}

\usepackage{amsmath,amsthm,amssymb,mathrsfs}

\textwidth=155mm
\oddsidemargin=0.5cm
\evensidemargin=0.5cm

\usepackage{mathptmx,amsmath,amssymb,mathrsfs}
\usepackage[mathcal]{eucal}

\newtheorem{theorem}{Theorem}[section]
\newtheorem{lemma}[theorem]{Lemma}
\newtheorem{corollary}[theorem]{Corollary}
\newtheorem{remark}[theorem]{Remark}

\numberwithin{equation}{section}

\newenvironment{mlist}{\list{}{\listparindent 0pt
\itemsep 0pt \parsep 2pt \topsep 1pt
}}{\endlist}

\newcommand{\sF}{\mathscr{F}}
\newcommand{\sH}{\mathscr{H}}
\newcommand{\sK}{\mathscr{K}}

\newcommand{\cK}{\mathcal{K}}
\newcommand{\cR}{\mathcal{R}}

\newcommand{\ad}{\operatorname{ad}}
\newcommand{\Ad}{\operatorname{Ad}}
\newcommand{\tr}{\operatorname{tr}}

\newcommand{\card}{\operatorname{card}}
\newcommand{\diag}{\operatorname{diag}}
\newcommand{\End}{\operatorname{End}}
\newcommand{\Id}{{\operatorname{Id}\kern.4pt}}
\newcommand{\rank}{\operatorname{rank}}

\newcommand{\eqdef}{\stackrel{\mathrm{def}}{=}}
\newcommand{\tith}{{\widetilde\theta}}
\newcommand{\tio}{{\widetilde\omega}}
\newcommand{\tw}{{*}}

\newcommand{\fa}{\mathfrak{a}}
\newcommand{\fg}{\mathfrak{g}}
\newcommand{\fh}{\mathfrak{h}}
\newcommand{\fk}{\mathfrak{k}}
\newcommand{\fm}{\mathfrak{m}}
\newcommand{\ft}{\mathfrak{t}}
\newcommand{\fz}{\mathfrak{z}}

\newcommand{\rd}{\mathrm{d}}

\newcommand{\ri}{\mathrm{i}}
\newcommand{\ai}{{i}}

\newcommand{\bbR}{\mathbb{R}}
\newcommand{\bbZ}{\mathbb{Z}}
\newcommand{\bbC}{\mathbb{C}}

\newcommand{\bfg}{\mathbf{g}}
\newcommand{\bfr}{\mathbf{r}}

\begin{document}

\title[Ricci-flat K\"ahler metrics on tangent bundles of
rank-one symmetric spaces]{Ricci-flat K\"ahler metrics on tangent bundles of
rank-one symmetric spaces of compact type}

\thanks{Research supported by the Ministry of Economy, Industry
and Competitiveness, Spain, under Project MTM2016-77093-P.}

\author[P.~M. Gadea]{P.~M. Gadea}
\address{Instituto de F\'\i sica Fundamental, CSIC,
Serrano 113 bis, 28006-Madrid, Spain.}
\email{p.m.gadea@csic.es }
\author[J.~C.~Gonz{\'a}lez-D{\'a}vila]{J.~C.~Gonz{\'a}lez-D{\'a}vila}
\address{Departamento de Matem\'aticas, Estad\'istica e Investigaci\'on
Ope\-ra\-tiva, University of La Laguna, 38200 La Laguna, Tenerife, Spain.}
\email{jcgonza@ull.es}
\author[I.~V. Mykytyuk]{I.~V. Mykytyuk}
\address{Institute of Mathematics, Cracow University of Technology,
Warszawska 24, 31155, Cracow, Poland. \newline
\indent Institute of Applied Problems of Mathematics and Mechanics,
Naukova Str. 3b, 79601, Lviv, Ukraine.}
\email{mykytyuk{\_\,}i@yahoo.com}

\keywords{Invariant Ricci-flat K\"ahler structures,
rank-one semisimple Riemannian symmetric spaces of
compact type, restricted roots}
\subjclass{53C30,
               53C35,
               53C55,
}

\maketitle

\begin{abstract}
We give an explicit description of
all complete $G$-invariant Ricci-flat K\"ahler metrics
on the tangent bundle $T(G/K)\cong G^\bbC/K^\bbC$ of
rank-one Riemannian symmetric spaces $G/K$ of compact type,
in terms of associated
vector-functions.
\end{abstract}

\section{Introduction}
\label{s.1}

Over the latest decades
there has been considerable interest in Ricci-flat K\"ahler
metrics whose underlying manifold is diffeomorphic to the
tangent bundle
$T(G/K)$ of a Riemannian symmetric space
$G/K$ of compact type. For instance, a remarkable class of Ricci-flat
K\"ahler manifolds of cohomo\-geneity one was discovered by
M.~Stenzel~\cite{St}. This has originated a great deal of papers.
To cite but a few: M.~Cveti\v c,
G.\,W.~Gibbons, H.~L\"u and C.\,N.~Pope~\cite{CGLP} studied
certain harmonic forms on these manifolds and found an
explicit formula for the Stenzel metrics in terms of
hypergeometric functions. Earlier, T.\,C.~Lee~\cite{Le} gave
an explicit formula of the Stenzel metrics for classical spaces
$G/K$ but in another vein, using the approach of G.~Patrizio
and P.~Wong~\cite{PW}. Remark also that in the case of the
standard sphere
${\mathbb S}^2$, the Stenzel metrics coincide with the
well-known Eguchi-Hanson metrics~\cite{EH}.
On the other hand, and as it is well known, Stenzel metrics continue
being a source
of results both
in physics and differential geometry. We cite here only to G.~Oliveira \cite{Oli}
and M.~Ionel and
T.\,A. Ivey \cite{II}.

In the present paper we give an {\it explicit\/} description of
all complete $G$-invariant Ricci-flat K\"ahler metrics
on the tangent bundle $T(G/K)$ of
rank-one Riemannian symmetric spaces $G/K$ of compact type
or, equivalently, on the complexification
$G^{\mathbb C}/K^{\mathbb C}$ of $G/K$. To this end,
reached in our main assertions (Theorem~\ref{th.4.1}
and its Corollary~\ref{co.4.3}), we use the method of our
article~\cite{GGM}, giving the result in terms of associated
vector-functions (see below in this introduction). In this article
it is also shown that
this set of metrics contains a new family of metrics which are not
$\partial\bar\partial$-exact if
$G/K \in \{{\mathbb C}{\mathbf P}^n, n\geqslant 1\}$,
and coincides with the set of $\partial\bar\partial$-exact
Stenzel metrics for any of the latter spaces $G/K$.

Remark here that until now, in the case of the space
${\mathbb C}{\mathbf P}^n$ $(n\geqslant 1)$,
all known Ricci-flat K\"ahler metrics were Calabi metrics,
so being hyper-K\"ahlerian and thus automatically Ricci-flat
(see O.~Biquard and P.~Gauduchon~\cite{BG1,BG2} and
E.~Calabi~\cite{Ca}). Since by A.~Dancer and
M.Y.~Wang~\cite[Theorem 1.1]{DW} any complete
$G$-invariant hyper-K\"ahlerian metric on
$G/K={\mathbb C}{\mathbf P}^n$ $(n\geqslant 2)$
coincides with the Calabi metric, our new metrics are not
hyper-K\"ahlerian.

Note also, that in~\cite{DW} the K\"ahler-Einstein
metrics on manifolds of
$G$-cohomogeneity one were classified but only under one
additional assumption: It is assumed that the isotropy
representation of the space
$G/H$ (see our notation below) splits into pairwise
inequivalent sub-representations. This condition is crucial
for the fact that the Einstein equation can be solved
(see~\cite[Theorem 2.18]{DW}). But this assumption fails,
for instance, for the symmetric space
${\mathbb C}{\mathbf P}^n$ $(n\geqslant2)$.

Let $G/K$ be a rank-one symmetric space of a compact
connected Lie group $G$. The tangent bundle
$T(G/K)$ has a canonical complex structure
$J^K_c$ coming from the $G$-equivariant diffeomorphism
$T(G/K) \to G^\bbC/K^\bbC$. The latter space is the above-mentioned
complexification of $G/K$.
In our paper~\cite{GGM} we
described, for such a $G/K$, all
$G$-invariant K\"ahler structures
$({\mathbf g}, J^K_c)$ which are moreover Ricci-flat on the
punctured  tangent bundle $T^+(G/K)$ of $T(G/K)$. This description
is based on the fact that $T^+(G/K)$ is the image of $G/H \times \bbR^+$
under certain $G$-equivariant diffeomorphism. Here
$H$ denotes the stabilizer of any element of
$T(G/K)$ in general position. Such $G$-invariant
K\"ahler and Ricci-flat K\"ahler structures are determined
completely by a unique vector-function
${\mathbf a}\colon \bbR^+ \to \fg_H$
satisfying certain conditions,
$\fg_H$ being the subalgebra of
$\Ad(H)$-fixed points of the Lie algebra of $G$.

As for the contents, we recall in Section \ref{s.2} some definitions and
results on the canonical complex structure on $T(G/K)$. In Section \ref{s.3}
we recall the general description given in \cite{GGM} of
invariant Ricci-flat K\"ahler metrics on tangent
bundles of Riemannian symmetric spaces of compact type,
especially in Theorems \ref{th.3.2} and \ref{th.3.5} below, given
here without proof. In Section \ref{s.4}, we state and prove
Theorem \ref{th.4.1} and its Corollary~\ref{co.4.3}
giving the invariant Ricci-flat K\"ahler metrics on the punctured
tangent bundles $T^+(G/K)$ of the rank-one Riemannian  symmetric spaces
of compact type and then the complete invariant Ricci-flat
K\"ahler metrics on $T(G/K)$.

\section{The canonical complex structure on $T(G/K)$}
\label{s.2}

Consider a homogeneous manifold $G/K$, where $G$ is a
compact connected Lie group and $K$ is
some closed subgroup of $G$. Let $\fg$ and
$\fk$ be the Lie algebras of $G$ and $K$ respectively.
There exists a positive-definite $\Ad(G)$-invariant form
$\langle\cdot ,\cdot \rangle$ on $\fg$.

Denote by
${\mathfrak m}$ the
$\langle\cdot ,\cdot \rangle$-orthogonal complement to
$\fk$ in $\fg$, that is,
$
\fg= {\mathfrak m} \oplus \fk
$
is the $\Ad(K)$-invariant vector space direct sum decomposition of
$\fg$. Consider the trivial vector bundle
$G\times {\mathfrak m}$ with the two Lie group actions
(which commute) on it:
the left
$G$-action, $l_h \colon(g,w)\mapsto(hg,w)$ and the right
$K$-action $r_k \colon(g,w)\mapsto(gk,\Ad_{k^{-1}}w)$. Let
$$
\pi \colon G\times {\mathfrak m}\to G\times_K {\mathfrak m},
\quad(g,w)\mapsto [(g,w)],
$$
be the natural projection for this right
$K$-action. This projection is
$G$-equivariant. It is well known that
$G\times_K {\mathfrak m}$ and
$T(G/K)$ are diffeomorphic. The corresponding
$G$-equivariant diffeomorphism
\begin{equation*}
\phi \colon G\times_K {\mathfrak m}\to T(G/K),
\quad
[(g,w)]\mapsto \frac{\rd}{\rd t}\Bigr|_0 g\exp(tw)K,
\end{equation*}
and the projection $\pi$ determine the $G$-equivariant submersion
$\Pi = \phi\circ\pi \colon G\times{\mathfrak m}\to T(G/K)$.

Let $G^{\mathbb C}$ and $K^{\mathbb C}$ be the complexifications
of the Lie groups $G$ and $K$.
In particular, $K$ is a maximal compact subgroup of the Lie group
$K^{\mathbb C}$ and the intersection of $K$ with each connected
component of
$K^{\mathbb C}$ is not empty~(cf.\ A.L.~Onishchik and E.V.~Vinberg
\cite[Ch.\ 5, p.\ 221]{OV} and note that
$G^{\mathbb C}$, $K^{\mathbb C}$, $G$ and $K$ are algebraic groups).
Let $\fg^\mathbb{C}=\fg\oplus\ai\fg$ and $\fk^\mathbb{C}=\fk\oplus\ai\fk$
be the complexifications of the compact Lie algebras
$\fg$ and $\fk$.

Since
$G$ and $K$ are maximal compact Lie subgroups of
$G^{\mathbb C}$ and $K^{\mathbb C}$,
respectively, by a result of G.D.~Mostow \cite[Theorem~4]{Mo1}, we have that
$K^{\mathbb C}=K\exp(\ai \fk)$,
$G^{\mathbb C}=G\exp(\ai {\mathfrak m})\exp(\ai \fk)$,
and the mappings
\begin{equation*}
\begin{split}
G\times {\mathfrak m}\times \fk &\to G^{\mathbb C}, \qquad
(g,w,\zeta) \mapsto g\exp(\ai w)\exp(\ai \zeta), \\
K\times \fk &\to K^{\mathbb C}, \qquad
(k,\zeta) \mapsto k\, \exp(\ai \zeta),
\end{split}
\end{equation*}
are diffeomorphisms. Then the map
\begin{equation*}
f^{\times}_K \colon G^{\mathbb C}/K^{\mathbb C}
\to G\times_K {\mathfrak m},
\qquad
g\exp(\ai w)\exp(\ai \zeta)K^{\mathbb C}\mapsto [(g,w)],
\end{equation*}
is a $G$-equivariant diffeomorphism~\cite[Lemma~4.1]{Mo2}.
It is clear that
\begin{equation*}
f_K \colon G^{\mathbb C}/K^{\mathbb C} \to T(G/K),
\qquad
g\exp( \ai w)\exp( \ai \zeta)K^{\mathbb C}\mapsto \Pi(g,w),
\end{equation*}
is also a $G$-equivariant diffeomorphism.
Since $G^{\mathbb C}/K^{\mathbb C}$ is a complex manifold,
the diffeomorphism
$f_K$ supplies the manifold $T(G/K)$ with the
$G$-invariant complex structure which we denote by $J_c^K$.

\section{Invariant Ricci-flat K\"ahler metrics on tangent
bundles of compact Riemannian symmetric spaces. General description}
\label{s.3}

We continue with the previous notations but in this section
and the next one it is assumed in addition that
$G/K$ is a rank-one Riemannian symmetric space of a
connected, compact semisimple Lie group $G$.

\subsection{Root theory of Riemannian symmetric spaces
of rank one}
\label{ss.3.1}

Here we will review a few facts about Riemannian symmetric
spaces of rank one~\cite[Ch.\ VII, \S2, \S11]{He} and
results of our paper~\cite{GGM} adapted to the case of
these (rank one) spaces.

We have then
\begin{equation*}
\fg=\fm\oplus\fk,
\quad\text{where }\quad
[{\mathfrak m},{\mathfrak m}]\subset\fk,
\quad [\fk,\fm]\subset \fm,
\quad [\fk,\fk]\subset \fk,
\quad \text{and} \quad \fk\bot \fm.
\end{equation*}
In other words, there exists an involutive automorphism
$\sigma \colon \fg\to\fg$ such that
\begin{equation*}
\fk=(1+\sigma)\fg
\quad \text{and} \quad
\fm=(1-\sigma)\fg.
\end{equation*}
Moreover, the scalar product $\langle\cdot ,\cdot \rangle$
is $\sigma$-invariant.

Let $\fa\subset\fm$ be some Cartan subspace of
the space ${\mathfrak m}$. There exists a $\sigma$-invariant Cartan
subalgebra $\ft$ of $\fg$ containing the commutative subspace $\fa$,
i.e.
\begin{equation*}
\ft=\fa\oplus\ft_0,
\;\; \text{where}\;\;
\fa=(1-\sigma)\ft, \;\;
\ft_0=(1+\sigma)\ft.
\end{equation*}
Then the complexification $\ft^\bbC$ is a Cartan subalgebra of the
reductive complex Lie algebra $\fg^\bbC$ and we have
the root space decomposition
\begin{equation*}
\fg^{\mathbb C}=\ft^\bbC\oplus\sum_{\alpha\in\Delta}
\tilde\fg_\alpha.
\end{equation*}
Here $\Delta$ is the root system of $\fg^\bbC$ with respect to
the Cartan subalgebra $\ft^\bbC$. For each $\alpha\in\Delta$ we have
\begin{equation*}
\tilde\fg_\alpha=\big\{\tilde\xi\in\fg^{\mathbb C}:
\ad_{\kern1pt \tilde t}\tilde\xi=\alpha(\tilde t)\tilde \xi,\;
\tilde t\in\ft^{\mathbb C}\big\}
\quad\text{and}\;\;
\dim_{\kern1pt \bbC} \tilde\fg_\alpha=1.
\end{equation*}
It is evident that the
centralizer $\tilde\fg_0$ of the space $\fa^\bbC$
in $\fg^\bbC$ is the subalgebra
\begin{equation}\label{eq.3.1}
\tilde\fg_0=\ft^\bbC\oplus\sum_{\alpha\in\Delta_0}
\tilde\fg_\alpha,
\end{equation}
where $\Delta_0=\{\alpha\in\Delta: \alpha|_{\fa^\bbC}=0\}$
is the root system of the reductive Lie algebra
$\tilde\fg_0$ with respect to its Cartan
subalgebra $\ft^\bbC$.

The set
$\Sigma=\{\lambda\in(\fa^\bbC)^*:
\lambda=\alpha|_{\fa^\bbC},\ \alpha\in \Delta\setminus\Delta_0\}$
is the set of restricted roots of the triple $(\fg,\fk, \fa)$, which is
independent of the choice of the $\sigma$-invariant Cartan subalgebra
$\ft$ containing the Cartan subspace $\fa$.

Since $G/K$ is a rank-one Riemannian symmetric space,
$\dim\fa=1$. Then the restricted root system is either
$\Sigma=\{\pm\varepsilon\}$ or
$\Sigma=\{\pm\varepsilon, \pm {\frac 12}\varepsilon\}$,
where $\varepsilon\in(\fa^{\mathbb C})^*$.
There exists a unique (basis) vector
$X\in\fa$ such that $\varepsilon(X)=\ai$, where,
since the algebra $\fg$ is compact,
$\alpha(\ft)\subset\ai\,\bbR$ for each $\alpha\in\Delta$.
It is clear that multiplying our scalar product
$\langle\cdot ,\cdot \rangle$ by a positive
constant we can suppose that $\langle X,X \rangle=1$.
For each $\lambda\in\Sigma$ define the linear function
$\lambda' \colon \fa\to\bbR$, by the relation $\ai\lambda'=\lambda$.
Note that then
\begin{equation*}
\langle X,X \rangle=1,
\qquad \ai\varepsilon'=\varepsilon,
\quad\text{and}\quad
\varepsilon'(X)=1.
\end{equation*}

Since the algebra
$\tilde\fg_0$ coincides with the centralizer of
the element $X\in\fa$ in
$\fg^\bbC$, there exists a basis
$\Pi$ of $\Delta$ (a system of simple roots) such that
$\Pi_0=\Pi\cap\Delta_0$ is a basis of
$\Delta_0$. Indeed, the element
$-\ai X\in\ai\ft$ belongs to the closure of some Weyl
chamber in $\ai\ft$ determining the basis $\Pi$. Then
$\Pi_0=\{\alpha\in\Pi: \alpha(-\ai X)=0\}$. The bases
$\Pi$ and $\Pi_0$ determine uniquely the subsets
$\Delta^+$ and $\Delta^+_0$ of positive roots of
$\Delta$ and
$\Delta_0$, respectively. It is evident that
\begin{equation*}
\Delta^+\setminus \Delta^+_0=\{\alpha\in \Delta: \alpha(-\ai X)>0\}.
\end{equation*}

The following decomposition
\begin{equation*}
\fg^{\mathbb C}=\tilde\fg_0\oplus
\sum_{\lambda\in\Sigma^+}
(\tilde\fg_\lambda\oplus\tilde\fg_{-\lambda}),
\quad \text{where}\quad
\tilde\fg_\lambda
= \sum_{\alpha\in\Delta\setminus\Delta_0,\, \alpha|_{\fa^\bbC}=\lambda}
\tilde\fg_\alpha
\end{equation*}
and $\Sigma^+$ denotes the subset
of positive restricted roots in $\Sigma$ determined by the
set of positive roots $\Delta^+$,
gives us a simultaneous diagonalization of
$\operatorname{ad}(\fa^{\mathbb C})$ on
$\fg^{\mathbb C}$.
Remark that in our case either
$\Sigma^+=\{\varepsilon\}$ or
$\Sigma^+=\{\varepsilon, {\frac 12}\varepsilon\}$.
Denote by
$m_\lambda$ the multiplicity of the restricted root
$\lambda\in\{\pm\varepsilon, \pm{\frac 12}\varepsilon\}$, that is,
$m_\lambda=\card\{\alpha\in\Delta: \alpha|_{\fa^\bbC}=\lambda\}$.

For each linear form $\lambda$ on $\fa^{\mathbb C}$ put
\begin{equation}\label{eq.3.2}
\begin{split}
{\mathfrak m}_\lambda&\eqdef
\big\{\eta\in{\mathfrak m}:\operatorname{ad}^2_w(\eta)
=\lambda^2(w)\eta,\ \forall w\in\fa\big\}, \\
\fk_\lambda&\eqdef
\big\{\zeta\in\fk: \operatorname{ad}^2_w(\zeta)
=\lambda^2(w)\zeta,\
\forall w\in\fa\big\}.
\end{split}
\end{equation}
Then ${\mathfrak m}_{\lambda}={\mathfrak m}_{-\lambda}$,
$\fk_{\lambda}=\fk_{-\lambda}$,
${\mathfrak m}_0=\fa$ and $\fk_0 = \fh$, where
\begin{equation}\label{eq.3.3}
\fh=\{u\in\fk: [u,\fa]=0\}=(\ker \ad_X)\cap\fk
\end{equation}
is the centralizer of $\fa$ in $\fk$.

In Table~3.1 we list all compact Riemannian symmetric
spaces of rank one with their corresponding multiplicities
$m_\varepsilon$, $m_{\varepsilon/{\scriptscriptstyle 2}}$
and type of the algebra $\fh$.

\bigskip
\begin{tabular}{llllll}
\multicolumn{6}{c}{Table 3.1. Irreducible rank-one Riemannian
symmetric spaces of compact type}\\
\hline\noalign{\smallskip}
& $G/K$
& {$\dim$}
& $m_\varepsilon$
& $m_{\varepsilon/{\scriptscriptstyle 2}}$ & $\fh$ \\
\noalign{\smallskip}\hline\noalign{\smallskip}
$\begin{matrix}
\mathbb{S}^n,(n\geqslant 2) \\ \noalign{\smallskip}
\hskip-14pt [{\mathbb R}{\mathbf P}^n]^*
\end{matrix}$ &
$\begin{matrix}
\mathrm{SO}(n{+}1)/\mathrm{SO}(n) \\ \noalign{\smallskip}
 [\mathrm{SO}(n{+}1)/\mathrm{O}(n)]^*\end{matrix}$ \smallskip&
$n$ & $n{-}1 $ & $0$ & $\mathfrak{so}(n{-}1)$ \\ \smallskip
${\mathbb C}{\mathbf P}^n$,\,{$(n \geqslant 2)$}
&{$\mathrm{SU}(n{+}1)/\mathrm{S}(\mathrm{U}(1){\times}
\mathrm{U}(n))$}& $2n$
& $1$ & $2n{-}2$ & $\bbR\oplus \mathfrak{su}(n{-}1)$ \\ \smallskip
${\mathbb H}{\mathbf P}^n$,\,{$(n\geqslant 1)$}
& {$\mathrm{Sp}(n{+}1)/\mathrm{Sp}(1){\times} \mathrm{Sp}(n)$} & $4n$
& $3$ & $4n{-}4$ & $\mathfrak{sp}(1)\oplus \mathfrak{sp}(n{-}1)$ \\
\smallskip
${\mathbb C\mathrm a}{\mathbf P}^2$
& {$\mathrm{F_4}/\mathrm{Spin(9)}$} & $16$ & $7$
& 8 & $\mathfrak{so}(7)$ \\
\hline
\end{tabular}

\bigskip\noindent
Here we assume that $\mathfrak{so}(1)=0$,
$\mathfrak{so}(2)\cong \bbR$, $\mathfrak{su}(0)
=\mathfrak{su}(1)=0$, $\mathfrak{sp}(0)=0$.
The symmetric spaces $G/K$ with non-connected $K$
are marked with $[\cdot]^*$ in Table~3.1.

It is clear that
$\fm^\bbC_\lambda\oplus\fk^\bbC_\lambda= \tilde\fg_\lambda
\oplus\tilde\fg_{-\lambda}$
for $\lambda\in\Sigma^+$ and
$\tilde\fg_0=\fm_0^\bbC\oplus\fk_0^\bbC=
\fa^\bbC\oplus\fh^\bbC$
(the Cartan subspace
$\fa^\bbC$ is a maximal commutative subspace of
$\fm^\bbC$). By~\cite[Ch.\ VII, Lemma 11.3]{He}, the
following decompositions are direct and orthogonal:
\begin{equation}\label{eq.3.4}
{\mathfrak m}=\fa\oplus
\fm_\varepsilon\oplus \fm_{\varepsilon/{\scriptscriptstyle 2}},
\qquad
\fk=\fh\oplus
\fk_\varepsilon\oplus \fk_{\varepsilon/{\scriptscriptstyle 2}},
\end{equation}
where to simplify the notation we suppose
that $\fm_{\varepsilon/{\scriptscriptstyle 2}}=0$
and $\fk_{\varepsilon/{\scriptscriptstyle 2}}=0$ if
$\tfrac12\varepsilon\not\in\Sigma$. We shall put
\[
{\mathfrak m}^+ \overset{\mathrm{def}}{=}
\fm_\varepsilon \oplus \fm_{\varepsilon/{\scriptscriptstyle 2}},
\qquad
\fk^+ \overset{\mathrm{def}}{=}
\fk_\varepsilon\oplus \fk_{\varepsilon/{\scriptscriptstyle 2}}.
\]
Since the restriction of the operator $\ad_X$ to the
subspace $\fm^+\oplus\fk^+$ is nondegenerate and
$\ad_X(\fm)\subset\fk$, $\ad_X(\fk)\subset\fm$
for any vector
$\xi_\lambda\in{\mathfrak m}_\lambda\subset\fm$,
$\lambda\in\Sigma^+$, by~(\ref{eq.3.2}) and~(\ref{eq.3.4})
there exists a unique vector
$\zeta_\lambda\in\fk_\lambda$ such that
\begin{equation}\label{eq.3.5}
[X,\xi_\lambda]= -\lambda'(X)\zeta_\lambda,
\quad [X,\zeta_\lambda]= \lambda'(X)\xi_\lambda,
\end{equation}
where, recall, $\varepsilon'(X)=1$. In particular,
$\dim{\mathfrak m}_\lambda =\dim\fk_\lambda=m_\lambda$
and there exists a unique endomorphism
$T \colon \fm^+ \oplus \fk^+ \to \fm^+ \oplus \fk^+$
such that
\begin{equation}\label{eq.3.6}
\ad_X|_{\fm_\lambda\oplus\fk_\lambda}
=\lambda'(X)T |_{\fm_\lambda\oplus\fk_\lambda},
\quad T({\fm_\lambda})={\fk_\lambda},
\quad T({\fk_\lambda})={\fm_\lambda},
\quad \forall \lambda\in\Sigma^+.
\end{equation}
This endomorphism is orthogonal because
$T^2= -\Id_{\fm^+ \oplus \fk^+}$ and the endomorphism
$\ad_X$ is skew-symmetric.
Note also here that by~(\ref{eq.3.1}) the subspace
\begin{equation*}
\ft_0=(1+\sigma)\ft
\quad\text{is a Cartan subalgebra of the centralizer}\ \fh
\ \text{and}\ \ft=\fa\oplus\ft_0.
\end{equation*}
Moreover, since $[\ft_0,\fm] \subset \fm$ and $[\ft_0,\fk] \subset \fk$,
$[\fa,\ft_0] = 0$, from definitions \eqref{eq.3.2}
and \eqref{eq.3.6} we obtain that
\begin{equation*}
\begin{split}
& [\ft_0,\fm_\lambda] \subset \fm_\lambda
\quad \text{and} \quad [\ft_0,\fk_\lambda] \subset \fk_\lambda
\quad \text{for each} \quad \lambda \in \Sigma^+, \\
& [\ad_x, T] = 0 \quad \text{on} \quad \fm^+ \oplus \fk^+
\quad \text{for each} \quad x \in \ft_0.
\end{split}
\end{equation*}

Fix the Weyl chamber $W^+$ in $\fa$ containing the element $X$:
\begin{equation*}
W^+=\bigl\{w\in\fa: \varepsilon(-\ai w)>0\bigr\}
=\bigl\{w\in\fa: \varepsilon'(w)>0\bigr\}=\bbR^+ X.
\end{equation*}
The subspace $\fm\subset\fg$ is
$\Ad(K)$-invariant. Each nonzero $\Ad(K)$-orbit in
$\fm$ intersects the Cartan subspace
$\fa$ and also the Weyl chamber
$W^+$, that is,
$\Ad(K)(W^+)=\fm\setminus\{0\}$. The set
$\fm^R=\fm\setminus\{0\}$ of all nonzero elements of
$\fm$ is the set of regular points in $\fm$.

Consider the centralizer $H$ of the Cartan subspace
$\fa$ in $\Ad(K)$, i.e.
\begin{equation}\label{eq.3.7}
H=\{k\in K: \Ad_k u=u \text{ for all } u\in\fa\}=\{k\in K: \Ad_k X=X\}.
\end{equation}
It is clear that the algebra $\fh$ (see~(\ref{eq.3.5})),
is the Lie algebra of $H$.

Our interest now centers on what will be shown to be an important
subalgebra of $\fg$. Let $\fg_H\subset\fg$ be the subalgebra
of fixed points of the group $\Ad(H)$, i.e.
\begin{equation}\label{eq.3.8}
\fg_H\eqdef \{u\in\fg: \Ad_h u=u
\text{ for all } h\in H\}.
\end{equation}
It is evident that $\fg_H\subset \fg_\fh$, where
\begin{equation}\label{eq.3.9}
\fg_\fh \eqdef\bigl\{u\in\fg: [u, \zeta]=0
\text{ for all } \zeta\in \fh\bigr\}
\end{equation}
is the centralizer of the algebra $\fh$ in $\fg$.
Note that in the general case one has
$\fg_H\ne \fg_\fh$ (see Example~4.6 in~\cite{GGM}).

To understand the structure of the algebra $\fg_H$
we consider more carefully the centralizer $\fg_\fh$.
Since $\fh$ is a compact Lie algebra,
$\fh=\fz(\fh)\oplus[\fh,\fh]$, where $\fz(\fh)$ is the center of
$\fh$ and $[\fh,\fh]$ is a maximal
semisimple ideal of $\fh$. It is clear that
\begin{equation*}
\fz(\fh)\subset \fg_\fh
\quad\text{and}\quad
\fg_\fh\cap [\fh,\fh]=0
\quad\text{because}\
\langle \fg_\fh,[\fh,\fh] \rangle
=\langle [\fg_\fh,\fh],\fh \rangle=0.
\end{equation*}
Therefore $\fg_\fh\cap \fh=\fz(\fh)$ and
$\fg_\fh\oplus[\fh,\fh]=\fg_\fh+\fh$ is a subalgebra of $\fg$.

By its definition, $\fz(\fh)$
is a subspace of the center of the algebra $\fg_\fh$.
Moreover, by~(\ref{eq.3.3}), $\fa\subset\fg_\fh$.
The space
$\fa\oplus \fz(\fh)\subset\fg_\fh$ is a Cartan subalgebra of
$\fg_\fh$ (a maximal commutative subalgebra of $\fg_\fh$)
because the centralizer of $\fa$ in $\fg$
equals $\fa\oplus\fh$,
$\fa\oplus \fz(\fh)$ is the center of the algebra $\fa\oplus\fh$
and $\fg_\fh\cap(\fa\oplus\fh)=\fa\oplus\fz(\fh)$ by definition of
$\fg_\fh$ (see also~\cite[Subsection 4.1]{GGM}).

Since $\fa\subset\fg_\fh$ and
$\ft_0\subset\fh$, then $\fa \oplus \ft_0\subset \fg_\fh + \fh$.
But $\fa\oplus \ft_0=\ft$ is a Cartan subalgebra of $\fg$.
This means that the complex reductive Lie algebras
$(\fg_\fh+\fh)^\bbC$, $\fg_\fh^\bbC$ and $\fh^\bbC$ are
$\ad(\ft^\bbC)$-invariant subalgebras of $\fg^\bbC$.
Taking into account that
$\ft\cap\fg_\fh=\fa\oplus\fz(\fh)$ and $\ft\cap\fh=\ft_0$,
we obtain the following direct sum decompositions:
\begin{equation}\label{eq.3.10}
\fg_\fh^\bbC=\fa^\bbC\oplus\fz(\fh)^\bbC\oplus
\sum_{\alpha\in\Delta_\fh} \tilde\fg_\alpha
\quad\text{and}\quad
\fh^\bbC=\ft_0^\bbC\oplus\sum_{\alpha\in\Delta_0}
\tilde\fg_\alpha,
\end{equation}
where $\Delta_\fh$ is some subset of the root system $\Delta$.
Since the spaces
$\fa\oplus\fz(\fh)\subset\ft$ and $\ft_0\subset\ft$
are Cartan subalgebras of the algebras $\fg_\fh$
and $\fh$ respectively, the decompositions above
are the root space decompositions of
$(\fg_\fh^\bbC,(\fa\oplus\fz(\fh))^\bbC)$ and
$(\fh^\bbC,\ft_0^\bbC)$, respectively. In particular, the subset
$\Delta_\fh\subset\Delta$ is the root system of
$(\fg_\fh^\bbC,(\fa\oplus\fz(\fh))^\bbC)$.

Since $\fh\subset\fk$, we see that
$\sigma(\fh)=\fh$ and the centralizer $\fg_\fh$ of
$\fh$ in $\fg$ is $\sigma$-invariant.
By~\cite[Proposition~4.3]{GGM},
\begin{equation*}
\Delta_\fh=\big\{\alpha\in\Delta:
\alpha(\ft_0)=0,\
\alpha+\beta\not\in\Delta\  {\rm for\ all }\ \beta\in\Delta_0
\big\}.
\end{equation*}
But by ~\cite[Lemma~4.1]{GGM} this subset
$\Delta_\fh$ of the set of roots
$\Delta$ admits the following alternative description:
\begin{equation}\label{eq.3.11}
\Delta_\fh=\big\{\alpha\in\Delta:
\alpha(\ft_0)=0, m_\lambda=1, \ \text{where}\
\lambda=\alpha|_{\fa^\bbC}\big\}.
\end{equation}
As follows from Table~3.1, two such restricted roots
$\{\varepsilon,-\varepsilon\}\subset\Sigma$ of multiplicity $1$
exist if and only if
$G/K\in\{{\mathbb C}{\mathbf P}^n \,(n\geqslant1),
{\mathbb R}{\mathbf P}^2\}$
(${\mathbb C}{\mathbf P}^1\cong \mathbb{S}^2$).
Hence for any of the latter rank-one symmetric spaces
$\fg_\fh=\fa\oplus\fz(\fh)$. Since for these latter spaces
$\fz(\fh)=0$ (see Table~3.1), we obtain that
\begin{equation}\label{eq.3.12}
\fg_\fh=\fa
\quad\text{if}\quad
G/K\not\in\{{\mathbb C}{\mathbf P}^n \,(n\geqslant1),
{\mathbb R}{\mathbf P}^2\}.
\end{equation}

Since $\sigma(\fh)=\fh$, the centralizer $\fg_\fh$ of
$\fh$ in $\fg$ is $\sigma$-invariant, i.e.
\begin{equation*}
\fg_\fh=\fm_\fh\oplus\fk_\fh,
\quad\text{where}\quad
\fm_\fh=\fg_\fh\cap\fm, \quad \fk_\fh=\fg_\fh\cap\fk
\end{equation*}
and as $\fa\subset\fm_\fh$ is a maximal commutative
subspace of $\fm$, the space $\fa$
is a Cartan subspace of $\fm_\fh$.
Then the set
$$\Sigma_\fh=\{\lambda\in(\fa^\bbC)^*:
\lambda=\alpha|_{\fa^\bbC},\ \alpha\in \Delta_\fh\}\subset\Sigma
$$
is the set of restricted roots of the triple
$(\fg_\fh,\fk_\fh, \fa)$ and since by~(\ref{eq.3.11}) the spaces
$\fm_\lambda$, $\lambda\in\Sigma_\fh$,
have dimension one,
we obtain the following direct orthogonal decompositions
\begin{equation*}
{\mathfrak m}_\fh=\fa\oplus
\sum_{\lambda\in\Sigma_\fh\cap \Sigma^+}{\mathfrak m}_\lambda,
\qquad
\fk_\fh=\fz(\fh)\oplus\sum_{\lambda\in\Sigma_\fh\cap \Sigma^+}
\fk_\lambda.
\end{equation*}

To describe the algebra $\fg_H\subset\fg_\fh$
we consider now in more detail the
subgroup $H\subset K$. By~\cite[Proposition~4.4]{GGM},
$H=(\exp(\fa)\cap K) H_0$, where $H_0=\exp\fh$ is the
identity component of the Lie group $H$
($H_0\subset K$ because $\fh\subset \fk$). Since the group
$H\subset K$ is compact and $K$ is a subgroup
of the group of fixed points of certain involutive automorphism
of $G$ acting by $\exp(v)\mapsto \exp(-v)$ on $\exp(\fa)$,
the discrete group $D_\fa\eqdef \exp(\fa)\cap K$ is finite and
\begin{equation}\label{eq.3.13}
D_\fa=\{\exp v: v\in \fa,\ \exp v=\exp(-v)\}\cap K.
\end{equation}
Since $[\fh,\fg_\fh]=0$, the group $\Ad(H_0)$ acts trivially
on $\fg_\fh$ and therefore
\begin{equation}\label{eq.3.14}
\fg_H=\{u\in\fg_\fh: \Ad_{\exp v} u=u \
\text{for all}\ v\in \fa \ \text{such that}\ \exp v\in D_\fa\}.
\end{equation}
Taking into account that $[\fa,\ft]=0$, we conclude that
the group $\Ad_{\exp\fa}$ acts trivially on the space
$\fa\oplus\fz(\fh)\subset\ft$ and consequently, by~(\ref{eq.3.12}),
\begin{equation}\label{eq.3.15}
\fg_H=\fg_\fh=\fa
\quad\text{if}\quad G/K\not\in\{{\mathbb C}{\mathbf P}^n \,(n\geqslant1),
{\mathbb R}{\mathbf P}^2\},
\end{equation}
and $\fg_H$ contains $\fa\oplus\fz(\fh)$ otherwise.
For the space $G/K={\mathbb C}{\mathbf P}^n$ $(n\geqslant1)$
we will calculate the algebra $\fg_H$ in the next section using
the matrix representation for $\fg \cong \mathfrak{su}(n+1)$.

The algebra $\fg_H$ is
$\sigma$-invariant because by definition~(\ref{eq.3.7}),
$\sigma\Ad(H)\sigma=\Ad(H)$. In particular,
\begin{equation*}
\fg_H=\fm_H\oplus\fk_H,
\quad\text{where}\quad
\fm_H=\fg_H\cap\fm, \quad \fk_H=\fg_H\cap\fk,
\end{equation*}
and $(\fg_H,\fk_H)$ is a symmetric pair. By maximality conditions
the space $\fa\subset\fg_H$ is a Cartan subspace of
$\fm_H\subset\fg_H$ and the space
$\fa\oplus\fz(\fh)$ is a Cartan subalgebra of
$\fg_H$.

For each $\lambda\in\Sigma^+$ and $g\in D_\fa\subset\exp\fa$ we have
that $\Ad_g(\fm_\lambda\oplus\fk_\lambda)=\fm_\lambda\oplus\fk_\lambda$
because $\Ad_{\exp v}=e^{\ad_v}$.
The set
\[
\Sigma_H=\{\lambda\in \Sigma_\fh:
\Ad_g|_{\fm_\lambda \oplus \fk_\lambda}
=\Id_{\fm_\lambda \oplus \fk_\lambda}
\ \text{for all}\ g\in D_\fa\}
\]
is the set of restricted roots of the triple
$(\fg_H,\fk_H, \fa)$. By~(\ref{eq.3.11}) each element
$\lambda\in \Sigma_H\subset\Sigma_\fh\subset\Sigma$
has multiplicity $1$ as an element of $\Sigma$, that is,
$\dim\fm_\lambda=\dim\fk_\lambda=1$.

The following decompositions are direct and orthogonal:
\begin{equation*}
{\mathfrak m}_H=\fa\oplus
\sum_{\lambda\in\Sigma_H\cap\Sigma^+}{\mathfrak m}_\lambda,
\qquad
\fk_H=\fz(\fh)\oplus\sum_{\lambda\in\Sigma_H\cap\Sigma^+}
\fk_\lambda.
\end{equation*}

\begin{remark}\label{re.3.1}
Put ${\mathfrak m}_H^+=\sum_{\lambda\in\Sigma_H\cap\Sigma^+}
{\mathfrak m}_\lambda$
and $\fk_H^+=\sum_{\lambda\in\Sigma_H\cap\Sigma^+}
\fk_\lambda$. Consider the orthogonal decompositions:
${\mathfrak m}^+={\mathfrak m}_H^+\oplus{\mathfrak m}_{\tw}^+$
and
$\fk^+=\fk_H^+\oplus\fk_{\tw}^+$,
where $\fm^+_* = \sum_{\lambda \in \Sigma^+\backslash \Sigma_H}
\fm_\lambda$
and $\fk^+_* = \sum_{\lambda \in \Sigma^+\backslash \Sigma_H}
\fk_\lambda$.
Since the decompositions
\begin{equation*}
\fg_H=\fa\oplus\fm_H^+\oplus\fk_H^+\oplus\fz(\fh),
\quad
\fg=\fa\oplus\fm_H^+\oplus\fk_H^+
\oplus\fm_{\tw}^+\oplus\fk_{\tw}^+\oplus\fh=
\fg_H\oplus(\fm_{\tw}^+\oplus\fk_{\tw}^+)\oplus[\fh,\fh]
\end{equation*}
are orthogonal and $[\fg_H,\fh]=0$, one has that $\fg_H\oplus[\fh,\fh]$
is a subalgebra of $\fg$.

Moreover, because of its definition,
$T({\fm_\lambda})={\fk_\lambda}$,
$T({\fk_\lambda})={\fm_\lambda}$
for all restricted roots $\lambda\in\Sigma^+$, we obtain that
\begin{equation*}
T(\fm_H^+)=\fk_H^+,
\quad
T(\fk_H^+)=\fm_H^+
\quad\text{and}\quad
T(\fm_{\tw}^+)=\fk_{\tw}^+,
\quad
T(\fk_{\tw}^+)=\fm_{\tw}^+.
\end{equation*}
\end{remark}

Fix in each subspace
$\fm_\lambda$, $\lambda\in\Sigma^+$, some basis
$\{\xi_\lambda^j, j=1, \dotsc ,m_\lambda\}$,
orthonormal with respect to the form
$\langle\cdot,\cdot \rangle$. In the case when
$\lambda\in\Sigma_\fh \cap\Sigma^+$,
$m_\lambda=1$ we have a unique vector
$\xi_\lambda^1$. As we remarked above, for each
$\lambda\in\Sigma^+$ there exists a unique basis
$\{\zeta_\lambda^j,\, j=1, \dotsc,m_\lambda\}$ of
$\fk_\lambda$ such that for each pair
$\{\xi_\lambda^j,\zeta_\lambda^j,
j=1, \dotsc ,m_\lambda\}$, condition~(\ref{eq.3.5}) holds.
The basis $\{\zeta_\lambda^j,
j=1, \dotsc,m_\lambda\}$, $\lambda\in\Sigma^+$, of $\fk_\lambda$,
is also orthonormal
due to the orthogonality of the operator $T$
(see~(\ref{eq.3.6})).
Fix also some orthonormal basis
$\{\zeta^k_0, k=1,\dotsc,\dim\fh\}$
of the centralizer $\fh$ of $\fa$ in $\fk$.
We will use the orthonormal basis
\begin{equation*}
X,\, \xi_\lambda^j,\,
\zeta_\lambda^j,\, j=1,\dotsc,m_\lambda, \,\lambda\in\Sigma^+;\;
\zeta^k_0,\; k=1,\dotsc,\dim\fh,
\end{equation*}
of the algebra $\fg$ in our calculations below.

\subsection{The canonical complex structure on $G/H\times W^+\cong
G/H\times \bbR^+$}
\label{ss.3.2}

By definition~(\ref{eq.3.7}) of the group $H$, the map
\begin{equation*}
K/H\times W^+\to \fm^R,\quad
(kH,w)\mapsto \Ad_k w,
\end{equation*}
is a well-defined diffeomorphism
because, recall, $W^+=\bbR^+ X$ and $\fm^R=\fm\setminus\{0\}$.
Thus the map
\begin{equation*}
f^+ \colon G/H\times W^+\to G\times_K\fm^R,
\quad
(gH,w)\mapsto[(g,w)],
\end{equation*}
is a well-defined $G$-equivariant diffeomorphism
of $G/H\times W^+$ onto the subset
$D^+=G\times_K\fm^R$,
which is an open dense subset of $G\times_K\fm$.

It is clear that the diagram
\begin{equation}\label{eq.3.16}
\begin{array}{rrl}
G\times W^+&\stackrel{\mathrm{id}}\longrightarrow&G\times \fm^R\\
\noalign{\smallskip}
{\downarrow\makebox[20pt]{\,$\scriptstyle{\pi_H\times \mathrm{id}}$}}
&&{\makebox[10pt]{$\scriptstyle{\pi}$}\downarrow} \\ \noalign{\smallskip}
G/H\times W^+&
\stackrel{f^+}\longrightarrow&G\times_K\fm^R
\end{array}
\hskip20pt
\begin{array}{rrl}
\makebox[35pt]{$(g,w)$\hfill}&\stackrel{\mathrm{id}}\longmapsto&(g,w)\\
\noalign{\smallskip}
{\downarrow\makebox[18pt]{\;$\scriptstyle{\pi_{H}\times \mathrm{id}}$}}
&&{\makebox[10pt]{$\scriptstyle{\pi}$}\downarrow} \\ \noalign{\smallskip}
(gH,w)&
\stackrel{f^+}\longmapsto&[(g,w)]
\end{array},
\end{equation}
where $\pi_H \colon G\to G/H$ is the canonical projection, is commutative.

Denote by $\xi^l$ the left $G$-invariant vector field on $G$
corresponding to $\xi\in\fg$. The submersion (projection)
$\pi \colon G\times\fm\to G\times_K\fm$ is (left)
$G$-equivariant. Therefore, the kernel
${\sK}\subset T(G\times {\mathfrak m})$ of the tangent map
$\pi_{*} \colon T(G\times\fm)\to T(G\times_K\fm)$
is generated by the global (left)
$G$-invariant vector fields $\zeta^L$, for
$\zeta\in \fk$, on $G\times {\mathfrak m}$,
\begin{equation}\label{eq.3.17}
\zeta^L{(g,w)}=(\zeta^{l}(g) ,[w,\zeta])\in T_g G\times T_w\fm,
\end{equation}
where the tangent space
$T_w\fm$ is canonically identified with the space $\fm$.

To describe the
$G$-invariant Ricci-flat K\"ahler metrics on $T(G/K)$ associated
to the cano\-nical complex structure $J^K_c$, we first attempt to
describe such metrics on the {\em punctured} tangent bundle
$T^+(G/K)\eqdef T(G/K)\setminus\{\mbox{zero section}\}$ of $G/K$.
It is clear that $T^+(G/K)=\phi(G\times_K \fm^R)$ and therefore
\begin{equation*}
T^+(G/K)=(\phi\circ f^+)(G/H\times W^+),
\end{equation*}
that is, $T^+(G/K)$ is $G$-equivariantly isomorphic to the direct product
$G/H\times W^+$, where the action of the group
$G$ on the first component is the natural one and that
on the second component is
the trivial one (see the commutative diagram~(\ref{eq.3.16})).
This $G$-equivariant diffeomorphism determines a
$G$-invariant complex structure on $G/H\times W^+$,
which  we denote also by $J^K_c$.

Note also here that the tangent space $T_o(G/H)$ at
$o=\{H\}\in G/H$ can be identified naturally with the
space
$
\fm\oplus\fk^+
= \fa\oplus\fm^+\oplus\fk^+,
$
because by definition $\fk=\fh\oplus\fk^+$
and $\fh$ is the Lie algebra of the group $H$.

Considering the coordinate $x$ on $W^+=\bbR^+ X$
associated with the basis vector $X$ of $\fa$,
we identify naturally $W^+\subset\fa$ with $\bbR^+$
replacing $w=xX$ by $x$:
\begin{equation*}
G/H\times W^+\to G/H\times \bbR^+,
\quad
(gH, xX)\mapsto(gH, x).
\end{equation*}

By the $G$-invariance it suffices
to describe the operators $J^K_c$ only at the
points $(o,x)\in G/H\times \bbR^+$, where $o=\{H\}$.
By~\cite[(4.47)]{GGM},
\begin{equation}\label{eq.3.18}
\begin{split}
J^K_c(o,x)(X,0)
&= \Big(0,\frac{\partial}{\partial x}\Big),\\
J^K_c(o,x)(\xi_\lambda^j,\, 0)
&=\Big(\frac{- \cosh\lambda'_x}{\sinh\lambda'_x}
\cdot\zeta_\lambda^j, \, 0\Big), \quad
j=1,\dotsc ,m_\lambda,\quad \lambda\in\Sigma^+,
\end{split}
\end{equation}
where $\lambda'_x=\lambda'(xX)\in\mathbb{R}$,
that is, $\lambda'_x=x$ if $\lambda=\varepsilon$ and
$\lambda'_x=\frac12 x$ if $\lambda=\frac12\varepsilon$.
Here
$T_o(G/H)$ is identified naturally with the space
$\fa\oplus \sum_{\lambda\in\Sigma^+}\fm_\lambda
\oplus \sum_{\lambda\in\Sigma^+}\fk_\lambda$, $\fa=\bbR X$, and,
in the first equation, we use naturally the usual
basis vector $\{\partial/\partial x\}$ of
$T_x\bbR^+$.

The second relation in~(\ref{eq.3.18}) can be represented
in a more general form (see~\cite[(4.27)]{GGM}):
\begin{equation*}
J^K_c(o,x)(\xi, 0)
=\Big(\frac{- \cos\ad_{xX}}{\sin\ad_{xX}}\xi,0\Big),
\quad\text{where }\xi\in\fm^+.
\end{equation*}

Let $F=F(J^K_c)$ be the subbundle of
$(1,0)$-vectors of the structure $J^K_c$ on the manifold
$G/H\times \bbR^+$. Since the map
$\pi_H\times \mathrm{id}\colon G\times \bbR^+\to G/H\times \bbR^+$
is a submersion, there exists a
unique maximal complex subbundle
$\sF$ of $T^\bbC(G\times \bbR^+)$ such that
$(\pi_H\times \mathrm{id})_*\sF=F$.
As shown in~\cite[(4.28),(4.29)]{GGM}, $\sF$
is generated by the kernel $\sH$ of the submersion
$\pi_H\times \mathrm{id}$,
\begin{equation}\label{eq.3.19}
\sH(g,x)=\{(\zeta^l(g),0),\,\zeta\in\fh\},
\quad g\in G, \;\; x\in \bbR^+,
\end{equation}
 and the left $G$-invariant global vector fields on $G\times \bbR^+$:
\begin{equation*}
\begin{split}
Z^{X}(g,x) &
= \Big(X^l(g),- \ri\, \frac{\partial}{\partial x}\Big),\\
\; Z^{\xi_\lambda^j}(g,x)
& =\left(\left(\frac{1}{\cosh\lambda'_x}\cdot\xi_\lambda^j- \ri \,
\frac{-1}{\sinh\lambda'_x}\cdot\zeta_\lambda^j\right)^{l}(g) ,0\right),
\end{split}
\end{equation*}
where $j=1,\dotsc,m_\lambda$, $\lambda\in\Sigma^+$.

To simplify calculations in the next subsection,
for the vector fields of the second family we will use
a more general expression
\begin{equation*}
Z^{\xi}(g,x)=\left(\left(R_x\xi- \ri \,
S_x\xi\right)^{l}(g) ,0\right),
\quad\xi\in\fm^+,
\end{equation*}
in terms of the two operator-functions
$R \colon \bbR^+\to\End(\fg)$
and $S \colon \bbR^+\to\End(\fg)$
on the set $\bbR^+$ such that
\begin{equation*}
\begin{split}
R_x\eta&=\frac{1}{\cos \ad_{xX}}\, \eta
\quad\text{if}\quad\eta\in\fm^+\oplus\fk^+,\qquad
R_x\eta=0 \quad\text{if}\quad\eta\in\fa\oplus\fh, \\
S_x\eta&=\frac{-1}{\sin \ad_{xX}}\, \eta
\quad\text{if}\quad\eta\in\fm^+\oplus\fk^+,\qquad
S_x\eta=0 \quad\text{if}\quad\eta\in\fa\oplus\fh,
\end{split}
\end{equation*}
where, recall, $xX\in W^+\subset\fa$.
Remark also that
$\frac{1}{\cos \ad_{xX}}\, \eta=\eta$ if $\eta \in \fa \oplus \fh$
but $R_x\eta=0$ in this case.
Since the operator $\ad_{xX}$ is skew-symmetric with
respect to the scalar product on $\fg$, each operator $R_x$
is symmetric and $S_x$ is skew-symmetric:
\begin{equation*}
\langle R_x\eta_1,\eta_2 \rangle=
\langle \eta_1,R_x\eta_2 \rangle,
\quad \langle S_x\eta_1,\eta_2 \rangle=
\langle \eta_1,-S_x\eta_2 \rangle,
\quad x\in \bbR^+, \ \eta_1,\eta_2\in\fg.
\end{equation*}
Moreover, since $xX\in W^+\subset\fa$, the restrictions
$R_x|_{\fm^+\oplus\fk^+}$ and
$S_x|_{\fm^+\oplus\fk^+}$ are nondegenerate and by Remark~\ref{re.3.1}
the following relations hold:
\begin{equation*}
R_x(\fm_s^+)=\fm_s^+, \quad
R_x(\fk_s^+)=\fk_s^+, \quad
S_x(\fm_s^+)=\fk_s^+, \quad
S_x(\fk_s^+)=\fm_s^+, \quad
s\in\{H,\tw\}.
\end{equation*}
It is clear also that
\begin{equation}\label{eq.3.20}
R_x|_{\fm_\lambda\oplus\fk_\lambda}=
\frac{1}{\cosh \lambda'_x}\, \Id_{\fm_\lambda\oplus\fk_\lambda},
\quad
S_x|_{\fm_\lambda\oplus\fk_\lambda}=
\frac{1}{\sinh \lambda'_x}\, T|_{\fm_\lambda\oplus\fk_\lambda}
\end{equation}
for all $\lambda\in\Sigma^+$, and $[R_x,T]=[S_x,T]=0$ on
$\fm^+\oplus\fk^+$ for all
$xX\in W^+$, where, recall, the operator
$T$ is defined by the expression~(\ref{eq.3.6}).

\subsection{Invariant Ricci-flat K\"ahler metrics on $G/H\times \bbR^+$}
\label{ss.3.3}

Let $\cK(G/H \times \bbR^+)=\{(\bfg,\omega, J^K_c)\}$ (resp.\
$\cR(G/H \times \bbR^+)=\{(\bfg,\omega, J^K_c)\}$) be the set of all
$G$-invariant K\"ahler (resp.\ Ricci-flat K\"ahler)
structures on $G/H \times \bbR^+$, identified also with the set
$\cK(T^+(G/K))$ (resp.\ $\cR(T^+(G/K))$) of all
$G$-invariant K\"ahler (resp.\ Ricci-flat K\"ahler)
structures on the open dense subset
$T^+(G/K)$ of $T(G/K)$, associated with $J^K_c$, via the
$G$-equivariant diffeomorphism $\phi \circ f^+\colon G/H \times \bbR^+
\to T^+(G/K)$ ($\bbR^+\cong W^+$).

Put
$$\{T_1,\dotsc,T_n\}=\{Z^X\}\cup
\{Z^{\xi_\lambda^j}, \,\lambda\in\Sigma^+,\,j=1, \dotsc, m_\lambda\}.
$$

The following theorem is Theorem~4.8 from~\cite{GGM} (adapted to
the rank one case) which describes the spaces $\cK(G/H \times \bbR^+)$ and
$\cR(G/H \times \bbR^+)$ in terms of invariant
forms on the space $G \times \bbR^+$:
\begin{theorem}\label{th.3.2}~{\em\cite{GGM}}
Let $\cK(G\times \bbR^+)=\{\tio\}$ be the set of all
$2$-forms $\tio$ on $G\times \bbR^+$ such that
\begin{mlist}
\item[$(1)$]
the form $\tio$ is closed;
\item[$(2)$]
the form $\tio$ is left $G$-invariant and right $H$-invariant;
\item[$(3)$]
the kernel of $\tio$ coincides with
the subbundle $\sH\subset T(G\times \bbR^+)$ in~{\em(\ref{eq.3.19});}
\item[$(4)$]
$\tio(T_j,T_k)=0$,\ $j,k=1,\dotsc,n;$
\item[$(5)$]
$\ri \, \tio(T,\overline{T})>0$ for each $T=\sum_{j=1}^n c_jT_j$,
where $(c_1,\ldots,c_n)\in\mathbb{C}^n\setminus\{0\}.$
\end{mlist}

Let $\cR(G\times \bbR^+)=\{\tio\}$ be the subset of the set
$\cK(G\times \bbR^+)=\{\tio\}$ consisting of all elements $\tio$
such that the following condition holds  {\em (}in addition{\em ):}
\begin{mlist}
\item[$(6)$]
$\det\bigl(\tio(T_j,\overline{T_k}) \bigr)=
\mathrm{const}$ on $G\times \bbR^+.$
\end{mlist}
Then {\em(i)}
For any $2$-form $\tio\in \cK(G\times \bbR^+)$
there exists a unique
$2$-form $\omega$ on $G/H\times \bbR^+\cong T^+(G/K)$ such that
$(\pi_H \times \mathrm{id})^\ast \omega =\tio$.
The map $\tio\mapsto \omega$ is a one-to-one map from
$\cK(G\times \bbR^+)$ onto $\cK(G/H\times \bbR^+)\cong \cK(T^+(G/K))$.

{\em(ii)} If the group $G$ is semisimple then the restriction of this map to
$\cR(G\times \bbR^+)$
is a one-to-one map from
$\cR(G\times \bbR^+)$ onto $\cR(G/H\times \bbR^+)\cong \cR(T^+(G/K))$.
\end{theorem}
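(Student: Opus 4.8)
The statement to be proved, Theorem~\ref{th.3.2}, asserts that pulling back along the submersion $\pi_H \times \mathrm{id}$ establishes a bijection between the set $\cK(G \times \bbR^+)$ of invariant closed $2$-forms satisfying $(1)$--$(5)$ and the set $\cK(G/H \times \bbR^+) \cong \cK(T^+(G/K))$ of invariant K\"ahler structures associated with $J^K_c$, and that this bijection restricts to one between $\cR(G \times \bbR^+)$ and $\cR(G/H \times \bbR^+)$ when $G$ is semisimple. Since the paper states that this is Theorem~4.8 of~\cite{GGM} adapted to the rank-one case, the plan is essentially to transcribe that proof, checking that each step goes through verbatim under the rank-one hypothesis $\dim \fa = 1$. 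The first step is to show a form $\tio$ on $G \times \bbR^+$ descends to a form $\omega$ on $G/H \times \bbR^+$: this is standard submersion theory --- $\tio$ descends iff it is $H$-basic, i.e.\ $H$-invariant and annihilating the kernel $\sH$ of $(\pi_H \times \mathrm{id})_*$; these are precisely conditions $(2)$ and $(3)$. Since $\pi_H \times \mathrm{id}$ is a surjective submersion, $\omega$ is unique and closedness of $\tio$ (condition $(1)$) descends to closedness of $\omega$.

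**Key steps.** Next I would verify that the remaining conditions $(4)$ and $(5)$ on $\tio$ translate exactly into the K\"ahler conditions for $(\bfg, \omega, J^K_c)$ on the quotient. The point is that $F = F(J^K_c)$, the $(1,0)$-subbundle, satisfies $(\pi_H \times \mathrm{id})_* \sF = F$ where $\sF$ is spanned by $\sH$ together with the left-invariant fields $Z^X, Z^{\xi_\lambda^j}$, i.e.\ by $\sH$ and $T_1, \dots, T_n$, as recorded in~\eqref{eq.3.19} and the displays following it. A $G$-invariant $(1,1)$-form $\omega$ on $G/H \times \bbR^+$ is K\"ahler with respect to $J^K_c$ iff: it is of type $(1,1)$, which --- since $\sH \subset \ker \tio$ --- amounts to $\tio$ vanishing on $\sF \otimes \sF$, equivalently on the $T_j$'s, i.e.\ condition $(4)$; and the associated Hermitian form $\bfg(\cdot, \cdot) = \omega(\cdot, J^K_c \cdot)$ is positive-definite, which on the fibre spanned by the $T_j$ is exactly the positivity $\ri\, \tio(T, \overline{T}) > 0$ of condition $(5)$. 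Closedness of $\omega$, already obtained, is the remaining K\"ahler axiom. Conversely, any element of $\cK(G/H \times \bbR^+)$ pulls back to a $\tio$ satisfying $(1)$--$(5)$, so the two conditions $(4)$--$(5)$ precisely carve out the K\"ahler locus; this gives part~(i).

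**The Ricci-flat refinement.** For part~(ii), the plan is to show that, when $G$ is semisimple, the Ricci-flatness of the K\"ahler metric corresponding to $\tio \in \cK(G \times \bbR^+)$ is equivalent to condition $(6)$, that $\det(\tio(T_j, \overline{T_k}))$ is constant on $G \times \bbR^+$. The Ricci form of a K\"ahler metric is $\rho = -\ri\, \partial\bar\partial \log \det(\bfg_{i\bar{j}})$ in any local holomorphic frame; here the $Z^X, Z^{\xi_\lambda^j}$ provide a global $G$-invariant frame for the $(1,0)$-bundle of the complexification $G^\bbC/K^\bbC$, so $\det(\bfg_{i\bar{j}})$ is represented, up to a nowhere-vanishing holomorphic factor coming from the change to an honest holomorphic coordinate frame, by $\det(\tio(T_j, \overline{T_k}))$. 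Ricci-flatness $\rho = 0$ then says $\log$ of this determinant is pluriharmonic; combining $G$-invariance (it depends only on $x \in \bbR^+$) with the structure of the complex manifold forces it to be constant --- this is where semisimplicity of $G$ enters, guaranteeing the Ricci form is exact/controlled and that there are no nonconstant invariant pluriharmonic functions obstructing the argument. I expect this last equivalence, disentangling the holomorphic-frame correction factor and pinning down the role of semisimplicity, to be the main obstacle; everything preceding it is routine submersion theory and bookkeeping with~\eqref{eq.3.18}--\eqref{eq.3.20}, and since the full argument is carried out in~\cite{GGM} I would cite it there rather than reprove it.
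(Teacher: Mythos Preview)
The paper does not prove Theorem~\ref{th.3.2}: as noted in the introduction (``Theorems \ref{th.3.2} and \ref{th.3.5} below, given here without proof'') and in the line preceding the statement (``The following theorem is Theorem~4.8 from~\cite{GGM}''), it is quoted from~\cite{GGM} and used as a black box. So there is no proof in the paper to compare against.

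That said, your outline is a faithful reconstruction of how such a proof must go, and matches the structure one expects in~\cite{GGM}. Part~(i) is entirely correct and routine: conditions~(2)--(3) are exactly $H$-basicness, so $\tio$ descends uniquely; closedness descends because pullback along a surjective submersion is injective on forms; condition~(4) together with $\sH\subset\ker\tio$ forces $\omega|_{F\times F}=0$, hence type $(1,1)$; and condition~(5) is positivity of the Hermitian form $\ri\omega(\cdot,\overline{\cdot})$ on $(1,0)$-vectors. Your sketch of part~(ii) is also on the right track: the $T_j$ are a global $G$-invariant frame of $(1,0)$-vectors, and the Ricci form is governed by $\partial\bar\partial\log\det(\tio(T_j,\overline{T_k}))$ up to the contribution of the frame change to genuine holomorphic coordinates. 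The one point you flag as delicate --- controlling that holomorphic correction and using semisimplicity to pass from ``pluriharmonic $G$-invariant'' to ``constant'' --- is indeed where the substance lies, and your decision to defer to~\cite{GGM} for it is exactly what the present paper does.
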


\begin{remark}\label{re.3.3}
Note that condition $(5)$ of the previous theorem is equivalent
to the following condition: the
Hermitian matrix-function $\mathbf{w}(x)$ on
$\bbR^+$ with entries $w_{jk}(x)
=\ri \tio(T_j,\overline{T_k})(e,x)$, $j,k=1,\dotsc,n$,
is positive-definite.
\end{remark}

To prove that a K\"ahler structure on $T^+(G/K)$
admits a K\"ahler extension to the whole $T(G/K)$
we will use Corollary 4.10 from~\cite{GGM} (adapted to
the rank one case):
\begin{corollary}\label{co.3.4}~{\em\cite{GGM}}
Let $\omega\in\cK(G/H\times \bbR^+)$ and
$\tio=(\pi_H \times \mathrm{id})^\ast \omega $. Then
$\underline{\omega}=((\phi\circ f^+)^{-1})^*\omega\in \cK(T^+(G/K))$.
Suppose that there exists a smooth form (extension)
$\underline{\omega}_0$ on the whole tangent bundle $T(G/K)$
such that $\underline{\omega}_0=\underline{\omega}$ on
$T^+(G/K)$. Then the form $\underline{\omega}_0$ determines
a $G$-invariant K\"ahler structure
on $T(G/K)$ (associated
to the canonical complex structure $J^K_c$) if and only if
for some sequence $x_m\in \bbR^+$, $m\in\mathbb{N}$,
such that $\lim_{m\to\infty}x_m=0$,
the Hermitian matrix $\mathbf{w}(0)$ with entries
$w_{jk}(0)=\lim_{m\to\infty} w_{jk}(x_m)=
\lim_{m\to\infty} \ri \tio(T_j,\overline{T_k})(e,x_m) $, $j,k=1,\dotsc,n$,
is positive-definite.
\end{corollary}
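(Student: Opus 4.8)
The plan is to reduce Kählerness of the extension $\underline{\omega}_0$ on $T(G/K)$ to a positivity statement at one point of the zero section, and then to identify $\mathbf{w}(0)$ with the corresponding Hermitian matrix.

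The assertion $\underline{\omega}\in\cK(T^+(G/K))$ is a restatement of Theorem~\ref{th.3.2}(i): $\omega$ lies in $\cK(G/H\times\bbR^+)$, and $\phi\circ f^+$ identifies $\cK(G/H\times\bbR^+)$ with $\cK(T^+(G/K))$. So the content is the criterion for $\underline{\omega}_0$. First I would observe that all the defining properties of a $G$-invariant Kähler structure except fibrewise positivity descend from the dense open set $T^+(G/K)$ to all of $T(G/K)$ by continuity of the smooth form $\underline{\omega}_0$: the smooth form $d\underline{\omega}_0$ vanishes on $T^+(G/K)$, hence $d\underline{\omega}_0=0$; the pointwise identities $\underline{\omega}_0(J^K_c\cdot,J^K_c\cdot)=\underline{\omega}_0$ and $g^*\underline{\omega}_0=\underline{\omega}_0$ for $g\in G$ hold on $T^+(G/K)$ — here one uses that $\underline{\omega}$ is $G$-invariant and of type $(1,1)$, which follows from Theorem~\ref{th.3.2}(2) and the $G$-equivariance of $\phi\circ f^+$ and $\pi_H\times\mathrm{id}$ — and therefore everywhere. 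Thus $\underline{\omega}_0$ is a closed, real, $G$-invariant $(1,1)$-form on $T(G/K)$, and it yields a $G$-invariant Kähler structure associated with $J^K_c$ exactly when the Hermitian form $h_0(U,V)\eqdef\ri\,\underline{\omega}_0(U,\overline{V})$ on $(1,0)$-vectors is positive-definite at every point. On $T^+(G/K)$ this $h_0$ agrees with the positive-definite Hermitian form of the Kähler form $\underline{\omega}$, so by continuity $h_0$ is positive-semidefinite along the zero section $G/K\subset T(G/K)$; as $G$ acts transitively on that zero section, $\underline{\omega}_0$ is Kähler if and only if $h_0$ is positive-definite at the single point $o'=eK$, the origin of the fibre $T_{eK}(G/K)$.

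It then remains to show that $\mathbf{w}(x)$ extends continuously to $x=0$ with $\mathbf{w}(0)$ the Gram matrix of $h_0|_{o'}$ in a basis of $T^{1,0}_{o'}$. For $x\in\bbR^+$ put $p(x)=(\phi\circ f^+)(eH,xX)$ and let $\widetilde{T}_j(x)\in T^{1,0}_{p(x)}$ be the image under $(\phi\circ f^+)_*$ of $(\pi_H\times\mathrm{id})_*T_j$ at $(e,x)$, so that $w_{jk}(x)=h_0\bigl(\widetilde{T}_j(x),\widetilde{T}_k(x)\bigr)$ and $p(x)\to o'$ as $x\to 0$. The key step is: each $\widetilde{T}_j(x)$ has a finite limit $\widetilde{T}_j(0)\in T^{1,0}_{o'}$, and $\widetilde{T}_1(0),\dots,\widetilde{T}_n(0)$ span $T^{1,0}_{o'}$ (hence form a basis, since $n=\dim_{\bbC}T(G/K)$). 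To see this, use the explicit formulas for $Z^X$ and $Z^{\xi_\lambda^j}$ together with~(\ref{eq.3.20}): the only term that could blow up is the factor $1/\sinh\lambda'_x$ appearing in $S_x$, which behaves like $1/(\lambda'(X)x)$ near $0$; but the corresponding part of $\widetilde{T}_j(x)$ lies along the $\fk^+$-directions of $T(G/H)$, and these are contracted at order $x$ by the diffeomorphism $\phi\circ f^+$ as it degenerates along the zero section (collapsing the $K/H$-fibres), so the two effects cancel and the limit is finite. Moreover, at points of the zero section $J^K_c$ is the canonical complex structure of $T(G/K)\cong G^{\bbC}/K^{\bbC}$, for which the $(1,0)$-vector at $o'$ with a prescribed horizontal part is unique; since the horizontal parts of $\widetilde{T}_1(0),\dots,\widetilde{T}_n(0)$ are $X$ and the vectors $\xi_\lambda^j$ (coming from the $X^l$-component of $Z^X$ and from the terms $R_x\xi_\lambda^j\to\xi_\lambda^j$ of $Z^{\xi_\lambda^j}$), they indeed form a basis of $T^{1,0}_{o'}$. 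Granting this, $w_{jk}(x)=h_0|_{p(x)}\bigl(\widetilde{T}_j(x),\widetilde{T}_k(x)\bigr)$ is continuous up to $x=0$, so for any sequence $x_m\to0$ one gets $w_{jk}(0)=h_0\bigl(\widetilde{T}_j(0),\widetilde{T}_k(0)\bigr)$; hence $\mathbf{w}(0)$ is the Gram matrix of $h_0|_{o'}$ in a basis of $T^{1,0}_{o'}$, and it is positive-definite if and only if $h_0$ is positive-definite at $o'$, i.e.\ if and only if $\underline{\omega}_0$ is Kähler on $T(G/K)$.

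The main obstacle is precisely this last limit computation — controlling simultaneously the blow-up of $S_x$ and the order-$x$ collapse of $\phi\circ f^+$ near the zero section, and verifying the basis property. For this the explicit descriptions of $\phi\circ f^+$ and of $J^K_c$ from Section~\ref{s.2} and~\cite{GGM}, and formulas~(\ref{eq.3.18}), are what make the cancellation transparent; the remaining steps are routine continuity and density arguments.
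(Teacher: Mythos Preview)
The paper does not prove Corollary~\ref{co.3.4}; it is imported verbatim from~\cite{GGM} (Corollary~4.10 there, specialised to rank one) and stated without argument. So there is no proof in this paper to compare your proposal against.

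On its own merits your outline is sound. Closedness, $G$-invariance and $(1,1)$-type pass from the dense open set $T^+(G/K)$ to $T(G/K)$ by continuity of the smooth extension; positivity reduces by $G$-transitivity on the zero section to a single point $o'$; and the identification of $\mathbf{w}(0)$ with the Gram matrix of $h_0|_{o'}$ in a basis of $T^{1,0}_{o'}$ is indeed the heart of the matter. Your sketch of the cancellation is correct and can be made precise in one line: at $(e,xX)$ the kernel of $\Pi_*$ contains $(\zeta_\lambda^{j\,l},[xX,\zeta_\lambda^j])=(\zeta_\lambda^{j\,l},x\lambda'(X)\xi_\lambda^j)$ by~(\ref{eq.3.5}) and~(\ref{eq.3.17}), so $\Pi_*(\zeta_\lambda^{j\,l},0)=-x\lambda'(X)\,\Pi_*(0,\xi_\lambda^j)$ and the coefficient $\tfrac{x\lambda'(X)}{\sinh\lambda'_x}\to1$. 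This gives $\widetilde T_j(0)$ with horizontal parts $X,\xi_\lambda^j$ spanning $\fm$, hence a basis of $T^{1,0}_{o'}$. Note also that your argument actually yields continuity of $\mathbf{w}$ at $0$, so the limit is independent of the sequence $x_m$; the corollary as stated asks only for \emph{some} sequence, which is weaker but of course implied.
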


\subsection{General description of the space $\cR(G\times \bbR^+)$}
\label{ss.3.4}

For any vector $a\in \fg$, denote by $\theta^a$ the left
$G$-invariant 1-form on the group $G$ such that
$\theta^a(\xi^{l})=\langle a,\xi \rangle$.
Since $r_g^*\theta^a=\theta^{\Ad_g a}$,
where $g\in G$, the form $\theta^a$ is
right $H$-invariant if and only if
$\Ad_h a=a$ for all $h\in H\subset G$.
Because
\begin{equation*}
\rd\theta^a(\xi^{l},\eta^{l})
=-\theta^a([\xi^{l},\eta^{l}])
=-\langle a,[\xi,\eta] \rangle,
\end{equation*}
the $G$-invariant form $\omega^a$ on $G$,
\begin{equation*}
\omega^a(\xi^{l},\eta^{l})\eqdef\langle a,[\xi,\eta] \rangle,
\quad
\xi,\eta\in\fg,
\end{equation*}
is a closed $2$-form on $G$.

Let $\mathrm{pr}_1 \colon G\times \bbR^+\to G$ and
$\mathrm{pr}_2 \colon G\times \bbR^+\to \bbR^+$
be the natural projections. Choosing some orthonormal basis
$\{e_1, \dotsc, e_N\}$ of the Lie algebra
$\fg$, where $e_1=X$, put
$\tith^{e_k}\eqdef \mathrm{pr} _1^*(\theta^{e_k})$ and
$\tio^{e_k}\eqdef \mathrm{pr} _1^*(\omega^{e_k})$.
For any vector-function $\mathbf{a} \colon \bbR^+\to\fg$,
$\mathbf{a}(x)=\sum_{k=1}^{N} a^k(x)e_k$, denote by
$\tith^{\mathbf{a}}$ (resp.\ $\tio^{\mathbf{a}}$)
the $G$-invariant 1-form $\sum_{k=1}^{N}a^k \cdot\tith^{e_k}$
(resp.\ 2-form $\sum_{k=1}^{N}a^k \cdot \tio^{e_k}$).

The following theorem~\cite[Theorem 5.1]{GGM}
(adapted to the rank one case) describes the spaces
$\cK(G\times \bbR^+)$ and $\cR(G\times \bbR^+)$
in terms of some $\bbR^+$-parameter family of exact $1$-forms on
the Lie group $G$:
\begin{theorem}\label{th.3.5}~{\em\cite{GGM}}
Let $\tio$ be a $2$-form belonging to
$\cK(G\times \bbR^+)$, where the compact Lie group
$G$ is semisimple. Then there exists a unique (up to a real
constant) smooth function
$f \colon \bbR^+\to \mathbb{R}$,
$x\mapsto f(x)$,
and a unique smooth vector-function
$\mathbf{a} \colon \bbR^+\to \fg_H$ given by
\begin{equation}\label{eq.3.21}
\begin{split}
& \mathbf{a}(x) =a^\fa(x)
+z_\fh+a^\fk(x)+a^\fm(x), \ \text{where}
\quad a^\fa(x)=f'(x) X,\ z_\fh\in\fz(\fh), \;\;\\
& a^\fk(x)=\sum_{\lambda\in\Sigma_H\cap\Sigma^+}
\tfrac{c^\fk_\lambda}{\cosh \lambda'(xX)}\zeta^{1}_\lambda
\in \fk_H^+, \;\;
a^\fm(x)=\sum_{\lambda\in\Sigma_{H}\cap\Sigma^+}
\tfrac{c^\fm_\lambda}{\sinh \lambda'(xX)}\xi^{1}_\lambda
\in \fm_H^+,
\end{split}
\end{equation}
$c^\fm_\lambda, c^\fk_\lambda\in\bbR$, such that $\tio$ is the
exact form expressed
in terms of $\mathbf a$ as
\begin{equation}\label{eq.3.22}
\tio=\rd \tith^{\mathbf{a}}
=\rd x \land \tith^{\mathbf{a'}} -\tio^{\mathbf{a}},
\quad \text{where}\quad
\mathbf{a'}=\frac{\partial \mathbf{a}}{\partial x}.
\end{equation}
Moreover, for all points $x\in \bbR^+$, the following conditions
$(1)\!\!-\!\!(3)$ hold:
\begin{mlist}
\item
[$(1)$] the components $a^\fk(x)+z_\fh$ and $a^\fm(x)$ of
the vector-function ${\mathbf a}(x)$ in~\emph{(\ref{eq.3.21})}
satisfy the commutation relations
\begin{equation}\label{eq.3.23}
\begin{split}
\bigl(R_x \cdot \ad_{a^\fk(x)} \cdot R_x
+S_x \cdot \ad_{a^\fk(x)} \cdot S_x
+(R^2_x + S^2_x)\ad_{z_\fh}\big)(\fm^+) & =0, \\
\bigl(R_x \cdot \ad_{a^\fm(x)} \cdot S_x-
S_x \cdot \ad_{a^\fm(x)} \cdot R_x\bigr)(\fm^+) & =0;
\end{split}
\end{equation}
moreover, if $G/K$
is an irreducible Riemannian symmetric space and $a^\fk(x)\equiv 0$,
then $z_\fh=0$;
\item
[$(2)$]
the Hermitian $p\!\times\! p$-matrix-function
${\mathbf w}_H(x)=\bigl(w_{k|j}(x)\bigr)$, $p=\dim\fm_H=1+\linebreak
\card(\Sigma_H\cap\Sigma^+)$,
with indices $k,j\in\{1\}
\cup\{{}_\lambda^{1},\, \lambda\in \Sigma_H\cap\Sigma^+\}$
and entries
\begin{equation*}
\begin{split}
w_{1|1}(x)&=2f''(x), \\
w_{1|{}_\lambda^{1}}(x)
&=2\lambda'(X)\Bigl(\ri \dfrac{c^\fk_\lambda}{\cosh^2 \lambda'_x}
-\dfrac{c^\fm_\lambda}{\sinh^2 \lambda'_x} \Bigr), \
\lambda\in \Sigma_H\cap\Sigma^+, \\
w_{{}_\lambda^{1}|{}_\mu^{1}}(x), & \quad
\lambda,\mu\in \Sigma_H\cap\Sigma^+, \quad
\text{determined by}~(\ref{eq.3.24}),
\end{split}
\end{equation*}
 is positive-definite;
\item
[$(3)$]
if $\fm_{\tw}^+\ne 0$ then
the Hermitian $s\times s$-matrix
${\mathbf w}_{\tw}(x)=(w_{{}_\lambda^j|{}_\mu^k})(x)$,
where $s=\dim\fm_{\tw}^+= \Sigma_{\lambda \in \Sigma^+ \setminus
\Sigma_H} m_{\lambda}$,
with indices ${}_\lambda^j,\, {}_\mu^k\in
\{{}_\lambda^j,\, \lambda\in\Sigma^+\setminus \Sigma_H,
j=1,\ldots,m_\lambda\}$ and entries
\begin{align}\label{eq.3.24}
w_{{}_\lambda^j|{}_\mu^k}(x)
& = -\frac{2\ri}{\sinh \lambda'_x \sinh \mu'_x}\langle
(\ad_{a^\fk(x)+z_\fh})\zeta^j_\lambda, \, \zeta^k_\mu\rangle \\
& \quad - \frac{2}{\cosh \lambda'_x \sinh \mu'_x}\langle
(\ad_{a^\fa(x)+a^\fm(x)})\xi^j_\lambda, \, \zeta^k_\mu\rangle \notag
\end{align}
is positive-definite.
\end{mlist}
If in addition
\begin{mlist}
\item
[$(4)$] either $\det {\mathbf w}_H(x)\cdot
\det {\mathbf w}_{\tw}(x) = \mathrm{const}$
when $\fm^+_{\tw}\ne0$ or $\det {\mathbf w}_H(x)\equiv
\mathrm{const}$ otherwise,
\end{mlist}
then $\widetilde\omega \in \cR(G \times \bbR^+)$.

Conversely, any $2$-form as in~\emph{(\ref{eq.3.22})}
determined by a vector-function
$\mathbf{a} \colon \bbR^+\to \fg_H$ as in~\emph{(\ref{eq.3.21})}
for which conditions $(1)\!\!-\!\!(3)$ hold, belongs to
$\cK(G\times \bbR^+)$ and if in addition $(4)$ holds,
it belongs to $\cR(G\times \bbR^+)$.
\end{theorem}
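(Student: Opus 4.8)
The plan is to read off the vector‑function $\mathbf a$ from the closedness and invariance of $\tio$, and then to translate each defining condition $(1)$--$(6)$ of Theorem~\ref{th.3.2} into the explicit data $(f,z_\fh,c^\fm_\lambda,c^\fk_\lambda)$ of~\eqref{eq.3.21}. First I would establish the representation $\tio=\rd\tith^{\mathbf a}$. Writing a left‑$G$‑invariant $2$‑form on $G\times\bbR^+$ as $\alpha_x+\rd x\wedge\beta_x$, with $\alpha_x$ a smooth family of left‑invariant $2$‑forms on $G$ and $\beta_x$ a family of left‑invariant $1$‑forms, closedness $\rd\tio=0$ splits into $\rd_G\alpha_x=0$ and $\partial_x\alpha_x=\rd_G\beta_x$. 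Since $G$ is compact semisimple, Whitehead's lemma gives $H^2(\fg)=0$, so every closed left‑invariant $2$‑form on $G$ equals $\omega^{a}$ for a unique $a\in\fg$ (uniqueness because $[\fg,\fg]=\fg$ forces the map $a\mapsto\omega^a$ to be injective). Hence $\alpha_x=-\tio^{\mathbf a(x)}$ for a unique smooth $\mathbf a(x)$, and $\partial_x\alpha_x=\rd_G\beta_x$ forces $\beta_x=\tith^{\mathbf a'(x)}$, i.e.\ $\tio=\rd\tith^{\mathbf a}$ as in~\eqref{eq.3.22}; the function $f$ is the antiderivative of $\langle\mathbf a(\cdot),X\rangle$, determined up to a real constant.

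Next I would locate $\mathbf a$ algebraically and pin down its $x$‑dependence from conditions $(2)$--$(4)$. Right $H$‑invariance together with $r_h^*\theta^a=\theta^{\Ad_h a}$ gives $\Ad_h\mathbf a(x)=\mathbf a(x)$, so $\mathbf a(x)\in\fg_H$; by Remark~\ref{re.3.1} this already produces the decomposition $\mathbf a=a^\fa+z_\fh+a^\fk+a^\fm$ into components in $\fa$, $\fz(\fh)$, $\fk_H^+$, $\fm_H^+$, with no $\tw$‑part. Evaluating the kernel condition $(3)$ on $(\zeta^l,\eta^l)$ and on $(\zeta^l,\partial_x)$ for $\zeta\in\fh$ gives $[\mathbf a,\fh]=0$ (consistent with $\mathbf a\in\fg_H\subset\fg_\fh$) and $\langle\mathbf a'(x),\fz(\fh)\rangle=0$, whence $z_\fh$ is constant. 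Condition $(4)$, $\tio(T_j,T_k)=0$, is the statement that $\tio$ is of type $(1,1)$; using $\tio=\rd x\wedge\tith^{\mathbf a'}-\tio^{\mathbf a}$, the eigenfactors~\eqref{eq.3.20} of $R_x,S_x$ and the rotation $T$ from~\eqref{eq.3.5}--\eqref{eq.3.6}, the equation $\tio(Z^X,Z^{\xi_\lambda^1})=0$ for $\lambda\in\Sigma_H\cap\Sigma^+$ separates, in real and imaginary parts, into the first‑order ODEs $(a^\fk_\lambda)'=-\lambda'(X)\tanh(\lambda'_x)\,a^\fk_\lambda$ and $(a^\fm_\lambda)'=-\lambda'(X)\coth(\lambda'_x)\,a^\fm_\lambda$, whose solutions are exactly the $1/\cosh\lambda'_x$ and $1/\sinh\lambda'_x$ profiles of~\eqref{eq.3.21} with integration constants $c^\fk_\lambda,c^\fm_\lambda$. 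The remaining part of $(4)$, namely $\tio(Z^{\xi},Z^{\eta})=0$ for $\xi,\eta\in\fm^+$, reduces — after inserting~\eqref{eq.3.20} and using $[\ad_x,T]=0$ — to the commutation relations~\eqref{eq.3.23}, and the supplementary claim $z_\fh=0$ (when $G/K$ is irreducible and $a^\fk\equiv0$) follows from the root description~\eqref{eq.3.11} of $\Sigma_H$.

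It then remains to encode positivity and Ricci‑flatness, and to run the converse. By Remark~\ref{re.3.3}, condition $(5)$ is positivity of the Hermitian matrix $\mathbf w(x)=(\ri\,\tio(T_j,\overline{T_k}))$; since the orthogonal splitting $\fm^+=\fm_H^+\oplus\fm_\tw^+$ block‑diagonalizes this Gram matrix into $\mathbf w_H$ (spanned by $Z^X$ and the $Z^{\xi_\lambda^1}$, $\lambda\in\Sigma_H$) and $\mathbf w_\tw$ (the remaining $Z^{\xi_\lambda^j}$), positivity of $\mathbf w$ is equivalent to positivity of both blocks, which are conditions $(2)$ and $(3)$; the entries follow by direct substitution, e.g.\ $\tio(Z^X,\overline{Z^X})=-2\ri f''$ gives $w_{1|1}=2f''$, and the $\tw$‑entries reproduce~\eqref{eq.3.24}. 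As $\cR(G\times\bbR^+)$ consists of those $\tio\in\cK(G\times\bbR^+)$ additionally satisfying condition $(6)$, $\det\mathbf w\equiv\mathrm{const}$, the factorization $\det\mathbf w=\det\mathbf w_H\cdot\det\mathbf w_\tw$ (with $\det\mathbf w_\tw$ absent when $\fm_\tw^+=0$) turns $(6)$ into condition $(4)$ of the statement. The converse direction just runs these computations backwards: for $\mathbf a$ of the form~\eqref{eq.3.21} satisfying $(1)$--$(3)$, the form $\tio=\rd\tith^{\mathbf a}$ is closed and left‑invariant by construction, right $H$‑invariant and with kernel exactly $\sH$ because $\mathbf a\in\fg_H$ with $z_\fh$ constant, type $(1,1)$ because the chosen profiles solve the ODEs while~\eqref{eq.3.23} kills the cross terms, and positive by $(2)$--$(3)$; hence $\tio\in\cK(G\times\bbR^+)$, and $(4)$ promotes it to $\cR(G\times\bbR^+)$.

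The main obstacle I anticipate is the $\tw$‑sector of step two: disentangling the full system $\tio(Z^\xi,Z^\eta)=0$ into precisely the pair~\eqref{eq.3.23}, and verifying that the residual Gram entries are exactly~\eqref{eq.3.24}. This is delicate because there the multiplicities exceed one and $\mathbf a$ has no component, so one must exploit $[\ad_x,T]=0$ for $x\in\ft_0$ together with the relations $R_x(\fm_\tw^+)=\fm_\tw^+$, $S_x(\fm_\tw^+)=\fk_\tw^+$ of Remark~\ref{re.3.1} to show that only the displayed combinations of $\ad_{a^\fk+z_\fh}$ and $\ad_{a^\fa+a^\fm}$ survive.
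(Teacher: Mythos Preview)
The paper does not actually prove Theorem~\ref{th.3.5}: it is quoted verbatim (adapted to rank one) from \cite[Theorem~5.1]{GGM}, and the authors state explicitly in the introduction to Section~\ref{s.3} that Theorems~\ref{th.3.2} and~\ref{th.3.5} are ``given here without proof.'' So there is nothing in this paper to compare your proposal against.

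That said, your outline is a plausible reconstruction of how such a result would be proved, and the broad architecture---extract $\mathbf a$ from $H^2(\fg)=0$, force $\mathbf a\in\fg_H$ from right $H$-invariance, read off the $\cosh^{-1}$/$\sinh^{-1}$ profiles from the $(1,1)$-condition $\tio(Z^X,Z^{\xi_\lambda^1})=0$, and identify positivity and Ricci-flatness with the block matrices $\mathbf w_H,\mathbf w_\tw$---is sound. Two places would need more care if you were actually writing the proof. First, the claim that the Gram matrix $\mathbf w(x)$ block-diagonalizes along $\fm_H^+\oplus\fm_\tw^+$ is not automatic from orthogonality alone: you need that $\ad_{\mathbf a(x)}$ has no cross terms between the $H$- and $\tw$-sectors, which uses $\mathbf a(x)\in\fg_H$ together with the $\Ad(H)$-invariance of the splitting in Remark~\ref{re.3.1}. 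Second, your derivation that the kernel of $\tio$ is \emph{exactly} $\sH$ (not larger) is only sketched; in the forward direction this is a hypothesis, but in the converse you must check that positivity of $\mathbf w_H$ and $\mathbf w_\tw$ indeed forces $\ker\tio=\sH$ and not merely $\ker\tio\supset\sH$.
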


Also Theorem~\ref{th.3.2} immediately implies
\begin{corollary}\label{co.3.6}~{\em\cite{GGM}}
Let $G/K$ be a rank-one Riemannian symmetric space of compact type.
Each $G$-invariant K\"ahler metric ${\bf g}$, associated with
the canonical complex structure $J^K_c$ on
$G/H \times \bbR^+ \cong T^+(G/K)$ $($$T^+(G/K)$ is an open
dense subset of $T(G/K)$$)$,
is uniquely determined by the K\"ahler form
$\omega(\cdot, \cdot) = {\bf g}(-J^K_c \cdot, \cdot)$
on $G/H \times \bbR^+$ given by
\[
(\pi_H \times \mathrm{id})^\ast \omega = \rd\widetilde\theta^{\mathbf{a}},
\]
where $\mathbf{a}$ is the unique smooth vector-function
$\mathbf{a} \colon \bbR^+ \to \fg_H$ in \eqref{eq.3.21} satisfying
conditions
$(1)\!\!-\!\!(3)$ of \emph{Theorem~\ref{th.3.5}}.
If, in addition, condition $(4)$ of \emph{Theorem~\ref{th.3.5}} holds,
this metric ${\mathbf g}$ is Ricci-flat.
\end{corollary}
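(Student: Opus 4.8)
The corollary is essentially a translation of the statements already established into the language of K\"ahler metrics on the quotient $G/H\times\bbR^+\cong T^+(G/K)$, so the plan is to assemble the pieces rather than to prove anything new. First I would invoke Theorem~\ref{th.3.2}(i): every $G$-invariant K\"ahler structure $({\bf g},\omega,J^K_c)$ on $G/H\times\bbR^+$ has K\"ahler form $\omega$ which pulls back under the submersion $\pi_H\times\mathrm{id}$ to a unique $2$-form $\tio\in\cK(G\times\bbR^+)$, and conversely; thus the correspondence ${\bf g}\leftrightarrow\omega\leftrightarrow\tio$ is bijective. Then the relation ${\bf g}(\cdot,\cdot)=\omega(\cdot,J^K_c\cdot)$ (equivalently $\omega(\cdot,\cdot)={\bf g}(-J^K_c\cdot,\cdot)$) together with the nondegeneracy guaranteed by conditions $(3)$--$(5)$ of Theorem~\ref{th.3.2} shows that ${\bf g}$ is recovered from $\omega$ and $J^K_c$, so the metric is indeed uniquely determined by its K\"ahler form.

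The second ingredient is Theorem~\ref{th.3.5}: since $G$ is semisimple (as $G/K$ is an irreducible, or at worst a product of irreducible, rank-one symmetric space of compact type), every $\tio\in\cK(G\times\bbR^+)$ is exact and equals $\rd\tith^{\mathbf a}$ for a unique smooth vector-function $\mathbf a\colon\bbR^+\to\fg_H$ of the special shape~\eqref{eq.3.21}, subject to conditions $(1)$--$(3)$. Substituting this into the bijection of Theorem~\ref{th.3.2}, I obtain exactly the asserted formula $(\pi_H\times\mathrm{id})^\ast\omega=\rd\tith^{\mathbf a}$, with $\mathbf a$ uniquely determined. The uniqueness of $f$ up to a real constant in Theorem~\ref{th.3.5} causes no ambiguity here because the corollary speaks only of $\mathbf a$, and $\mathbf a$ itself (through $a^\fa=f'X$) is uniquely pinned down.

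For the Ricci-flat assertion I would combine Theorem~\ref{th.3.2}(ii) with the last clause of Theorem~\ref{th.3.5}: if in addition condition $(4)$ of Theorem~\ref{th.3.5} holds, then $\tio\in\cR(G\times\bbR^+)$; applying the restriction of the bijection in Theorem~\ref{th.3.2}(ii) to $\cR(G\times\bbR^+)$ yields $\omega\in\cR(G/H\times\bbR^+)$, i.e.\ the corresponding metric ${\bf g}$ is Ricci-flat. Here I should note that condition $(4)$ as stated coincides (up to the identification of the block-diagonal determinant $\det{\bf w}_H\cdot\det{\bf w}_{\tw}$, or just $\det{\bf w}_H$ when $\fm^+_{\tw}=0$, with $\det(\tio(T_j,\overline{T_k}))$ up to a nonzero constant coming from the change of basis between $\{T_1,\dots,T_n\}$ and the $H$-/$*$-blocks) with condition $(6)$ of Theorem~\ref{th.3.2}.

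The only mild subtlety — and the place I would be most careful — is the block-decomposition bookkeeping: verifying that the Hermitian matrix $(\ri\,\tio(T_j,\overline{T_k}))$ in Theorem~\ref{th.3.2} decomposes, in a suitable reordering of the basis $\{T_1,\dots,T_n\}$ adapted to the splitting $\fm^+=\fm_H^+\oplus\fm_{\tw}^+$, into the two blocks ${\bf w}_H(x)$ and ${\bf w}_{\tw}(x)$ of Theorem~\ref{th.3.5}, with no cross-terms. This orthogonality of blocks is implicit in Theorem~\ref{th.3.5} (the entries $w_{1|{}_\lambda^1}$ involve only $\lambda\in\Sigma_H\cap\Sigma^+$, and~\eqref{eq.3.24} only $\lambda,\mu\in\Sigma^+\setminus\Sigma_H$), so positive-definiteness of ${\bf w}_H$ and ${\bf w}_{\tw}$ together is equivalent to positive-definiteness of the full matrix, matching condition $(5)$ of Theorem~\ref{th.3.2}; and the determinant condition $(4)$ matches $(6)$ likewise. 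Once this identification is recorded, the corollary follows immediately by composing the two one-to-one correspondences, so no further computation is needed.
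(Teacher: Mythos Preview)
Your proposal is correct and follows exactly the approach indicated in the paper: the corollary is stated there as an immediate consequence of Theorem~\ref{th.3.2} (combined with Theorem~\ref{th.3.5}, whose conditions are invoked in the corollary's statement), with no further argument given. Your careful unpacking of the bijections $\mathbf{g}\leftrightarrow\omega\leftrightarrow\tio\leftrightarrow\mathbf{a}$ and the block-determinant bookkeeping is more detailed than the paper provides, but it is precisely the content behind the word ``immediately.''
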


\begin{corollary}\label{co.3.7}~{\em\cite{GGM}}
Let $\omega$ be a $G$-invariant symplectic form on
$G/H\times \bbR^+$ such that
$(\pi_H \times \mathrm{id})^\ast \omega= \rd\tilde\theta^{\mathbf{a}}$,
where $\mathbf{a} \colon \bbR^+\to \fa$, $\mathbf{a}(x)= f'(x) X$,
for some function $f\in C^\infty(\bbR^+,\mathbb{R})$.
Then the pair
$(\omega,J^K_c)$ is a K\"ahler structure
on $G/H\times \bbR^+$
(equivalently $(\pi_H \times \mathrm{id})^\ast \omega \in
\cK(G\times \bbR^+)$$)$
if and only if $f'(x)>0$ and $f''(x)>0$ for all $x\in \bbR^+$.
In this case, the $G$-invariant function $Q \colon
G/H\times \bbR^+\to \bbR$, $Q(gH,x)=2f(x)$,
is a potential function of the K\"ahler structure $(\omega,J^K_c)$
on $G/H\times \bbR^+$.

The K\"ahler structure $(\omega,J^K_c)$
with $G$-invariant potential function
$Q$ is Ricci-flat K\"ahler
(equivalently $(\pi_H \times \mathrm{id})^\ast \omega \in
\cR(G\times \bbR^+)$$)$
if and only if
\begin{equation*}
f''\cdot\prod_{\lambda\in\Sigma^+}
\left( \frac{2\lambda'(\mathbf{a})}{\sinh 2\lambda'(\mathbf{a})}
\right)^{m_\lambda}=
f''
\cdot
\left( \frac{2f'}{\sinh(2f')}
\right)^{m_\varepsilon}
\cdot
\left( \frac{f'}{\sinh(f')}
\right)^{m_{\varepsilon/{\scriptscriptstyle 2}}}
\equiv\mathrm{const}.
\end{equation*}
\end{corollary}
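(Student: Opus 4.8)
\textbf{Proof proposal for Corollary~\ref{co.3.7}.}
The plan is to specialize Theorem~\ref{th.3.5} and Corollary~\ref{co.3.6} to the case where the defining vector-function takes values in $\fa$, i.e. $\mathbf{a}(x) = f'(x)X$ with $a^\fk \equiv 0$, $a^\fm \equiv 0$, $z_\fh = 0$. First I would observe that conditions $(1)$ and $(3)$ of Theorem~\ref{th.3.5} become vacuous: the commutation relations in~(\ref{eq.3.23}) hold trivially since $\ad_{a^\fk(x)} = \ad_{a^\fm(x)} = \ad_{z_\fh} = 0$, and by~(\ref{eq.3.24}) the matrix $\mathbf{w}_{\tw}(x)$ vanishes identically; but with $a^\fa + a^\fm$ and $a^\fk + z_\fh$ both annihilating everything, the positive-definiteness of $\mathbf{w}_{\tw}$ must instead be read off the original K\"ahler condition $(5)$ of Theorem~\ref{th.3.2}. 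So the cleanest route is to compute directly the Hermitian matrix $\mathbf{w}(x)$ with entries $w_{jk}(x) = \ri\,\tio(T_j,\overline{T_k})(e,x)$ for $\tio = \rd\tith^{\mathbf a}$, using the formula $\tio = \rd x \wedge \tith^{\mathbf{a}'} - \tio^{\mathbf a}$ from~(\ref{eq.3.22}) and the explicit vector fields $Z^X$, $Z^{\xi_\lambda^j}$.

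Concretely, I would evaluate $\tio(Z^X, \overline{Z^X})$, $\tio(Z^X, \overline{Z^{\xi_\lambda^j}})$ and $\tio(Z^{\xi_\lambda^j}, \overline{Z^{\xi_\mu^k}})$ at $(e,x)$. With $\mathbf{a}(x) = f'(x)X$ one has $\tith^{\mathbf{a}'}(\xi^l) = f''(x)\langle X,\xi\rangle$ and $\tio^{\mathbf a}(\xi^l,\eta^l) = f'(x)\langle X,[\xi,\eta]\rangle$. Since $Z^X = (X^l, -\ri\,\partial/\partial x)$, the $\rd x\wedge\tith^{\mathbf{a}'}$ term contributes $2\ri f''(x)$ to $\ri\,\tio(Z^X,\overline{Z^X})$ after accounting for the conjugation of the $-\ri$ in the $\bbR^+$-component; the $\tio^{\mathbf a}$ term drops since $[X,X]=0$. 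For the mixed entries, $Z^{\xi_\lambda^j}$ has no $\partial/\partial x$ component, so only $-\tio^{\mathbf a}$ survives: $\tio(Z^X,\overline{Z^{\xi_\lambda^j}})$ picks up $f'(x)\langle X,[\,\cdot\,,\,\cdot\,]\rangle$ applied to $X^l$ against $(R_x\xi_\lambda^j + \ri S_x\zeta_\lambda^j)^l$, and by $[X,\xi_\lambda] = -\lambda'(X)\zeta_\lambda$, $[X,\zeta_\lambda]=\lambda'(X)\xi_\lambda$ together with orthonormality this vanishes — giving a block-diagonal $\mathbf{w}(x)$. Finally $\tio(Z^{\xi_\lambda^j},\overline{Z^{\xi_\mu^k}})$ reduces to $-f'(x)\langle X,[R_x\xi_\lambda^j - \ri S_x\zeta_\lambda^j,\, R_x\xi_\mu^k + \ri S_x\zeta_\mu^k]\rangle$; expanding with~(\ref{eq.3.20}), the bracket $[\xi_\lambda,\zeta_\mu]$ has $X$-component $-\delta_{\lambda\mu}\delta_{jk}\lambda'(X)$ (from $\langle X,[\xi_\lambda^j,\zeta_\lambda^j]\rangle = -\langle[X,\xi_\lambda^j],\zeta_\lambda^j\rangle/? $, to be pinned down by~(\ref{eq.3.5})), and one gets $\ri\,\tio(Z^{\xi_\lambda^j},\overline{Z^{\xi_\lambda^j}}) = 2 f'(x)\,\dfrac{\lambda'(X)}{\sinh\lambda'_x\cosh\lambda'_x} = \dfrac{2f'(x)\lambda'(X)}{\sinh(2\lambda'_x)} \cdot 2\lambda'(X)/?$ — in any case a positive multiple of $f'(x)$ when $f'>0$. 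Thus $\mathbf{w}(x)$ is diagonal with entries $2f''(x)$ and positive multiples of $f'(x)$, so by Remark~\ref{re.3.3} condition $(5)$ of Theorem~\ref{th.3.2} holds if and only if $f'(x)>0$ and $f''(x)>0$ for all $x\in\bbR^+$; this proves the K\"ahler assertion. That $Q(gH,x) = 2f(x)$ is a potential follows because $\rd\tith^{f'(x)X} = \rd(f'(x)\tith^X) = \rd x\wedge f''\tith^X - f'\tio^X$ and one checks $-\rd J^K_c\,\rd Q$ reproduces this using~(\ref{eq.3.18}) — i.e. $\rd Q = 2f'\,\rd x$ pulls back appropriately and $J^K_c\,\rd x$ corresponds to $-\theta^X$ up to sign.

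For the Ricci-flat statement I would invoke condition $(6)$ of Theorem~\ref{th.3.2} (equivalently $(4)$ of Theorem~\ref{th.3.5}): since the form descends and the group is semisimple, $\tio \in \cR(G\times\bbR^+)$ iff $\det\mathbf{w}(x) = \mathrm{const}$. With $\mathbf{w}(x)$ diagonal, $\det\mathbf{w}(x)$ is $2f''(x)$ times the product over $\lambda\in\Sigma^+$ of the entries $\big(c_\lambda f'(x)\lambda'(X)/(\sinh\lambda'_x\cosh\lambda'_x)\big)$ each repeated $m_\lambda$ times. Absorbing the constants $c_\lambda$, $\lambda'(X)$ and using $\sinh\lambda'_x\cosh\lambda'_x = \tfrac12\sinh(2\lambda'_x)$ together with $\lambda'(xX) = \lambda'(\mathbf a)$ (as $\mathbf a \parallel X$, so $\lambda'(\mathbf a(x)) = f'(x)\lambda'(X)$ — here I must be careful that $\lambda'_x$ versus $\lambda'(\mathbf a)$ differ by exactly the factor $f'(x)$ built into $\mathbf a$, and the final formula is written with $\lambda'(\mathbf a)$), one rewrites $\det\mathbf{w}(x)$ as a constant multiple of
\[
f'' \cdot \prod_{\lambda\in\Sigma^+}\left(\frac{2\lambda'(\mathbf a)}{\sinh 2\lambda'(\mathbf a)}\right)^{m_\lambda},
\]
and the stated equation for the two cases $\Sigma^+ = \{\varepsilon\}$ and $\Sigma^+ = \{\varepsilon,\tfrac12\varepsilon\}$ follows by substituting $\varepsilon'(\mathbf a) = f'$ and $(\tfrac12\varepsilon)'(\mathbf a) = \tfrac12 f'$ and $m_\varepsilon$, $m_{\varepsilon/2}$ respectively. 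The main obstacle I anticipate is purely bookkeeping: correctly tracking the factors of $\ri$ coming from the $(1,0)$-vector fields, the sign in $\langle X,[\xi_\lambda^j,\zeta_\lambda^j]\rangle$ dictated by~(\ref{eq.3.5}), and — most delicately — the distinction between $\lambda'_x = \lambda'(xX)$ appearing in the operators $R_x,S_x$ and $\lambda'(\mathbf a)$ appearing in the final Ricci-flat identity, which coincide only because the $f'$ has been moved from $R_x,S_x$ into $\mathbf a$; all of this is routine once the conventions of~(\ref{eq.3.5}), (\ref{eq.3.18}) and~(\ref{eq.3.20}) are fixed.
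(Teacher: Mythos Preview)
The paper does not prove Corollary~\ref{co.3.7}; it is quoted without proof from~\cite{GGM}. So there is no in-paper argument to compare against directly, though the computation carried out in the proof of Theorem~\ref{th.4.1}, case~(1), is essentially the one you sketch.

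Two substantive points on your attempt. First, your claim that $\mathbf{w}_{\tw}(x)$ vanishes identically is wrong: in~(\ref{eq.3.24}) the second term involves $\ad_{a^\fa(x)+a^\fm(x)}$, and here $a^\fa(x)=f'(x)X$ is nonzero, so $\ad_{a^\fa(x)}\xi_\lambda^j=-f'(x)\lambda'(X)\zeta_\lambda^j$ by~(\ref{eq.3.5}). Hence $\mathbf{w}_{\tw}$ is diagonal with entries $\tfrac{2f'(x)\lambda'(X)}{\cosh\lambda'_x\sinh\lambda'_x}$, exactly as in~(\ref{eq.4.6}). You do recover by pivoting to the direct computation via Theorem~\ref{th.3.2}, which reproduces the same diagonal matrix, so this slip is harmless for the K\"ahler assertion.

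Second, and more seriously: the obstacle you flag at the end is real and your proposed resolution is incorrect. Your determinant has diagonal entries proportional to $\tfrac{\lambda'(\mathbf a)}{\sinh 2\lambda'_x}$ with $\lambda'_x=\lambda'(xX)$ in the denominator, whereas the formula stated in the Corollary has $\sinh 2\lambda'(\mathbf a)=\sinh\bigl(2f'(x)\lambda'(X)\bigr)$ there. These do \emph{not} coincide---$\lambda'_x$ is linear in $x$ while $\lambda'(\mathbf a)$ involves $f'(x)$---and there is no ``moving $f'$ from $R_x,S_x$ into $\mathbf a$'' that makes them equal. What your computation actually yields is
\[
f''\cdot\prod_{\lambda\in\Sigma^+}\Bigl(\tfrac{2\lambda'(\mathbf a)}{\sinh 2\lambda'_x}\Bigr)^{m_\lambda}\equiv\mathrm{const},
\]
which is precisely the condition used in the proof of Theorem~\ref{th.4.1} (see the displayed equation just after~(\ref{eq.4.6})), but is not the formula written in the Corollary. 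Either the Corollary as reproduced here carries a misprint in the argument of $\sinh$ (it should read $\sinh 2\lambda'_x$, consistent with how the paper itself applies it), or reaching the stated form requires an additional argument you have not supplied. In either case you should not dismiss this as bookkeeping.
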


\section{Complete invariant Ricci-flat K\"ahler metrics on
tangent bundles of rank-one Riemannian  symmetric spaces of compact type}
\label{s.4}

Let $\fg$ be a compact Lie algebra and let $\sigma$,
$\fk$, $\fm$, $\fa$, $X\in \fa$, $\Sigma$, etc.\ be as in
Section~\ref{s.3}. We continue with the previous
notations but in this section it is assumed in addition that
the subgroup $K$ is connected.

In this Section using Theorem~\ref{th.3.5}
we describe all invariant Ricci-flat K\"ahler structures on the tangent bundles
of the spaces under study, in terms of explicit expressions of the corresponding
vector-valued functions {\bf a}.

To this end we give with more detail
the facts concerning the case
$G/K={\mathbb C}{\mathbf P}^n$ $(n\geqslant1)$.
These spaces are Hermitian symmetric spaces and therefore we
will review a few facts about them~\cite[Ch.\ VIII,
\S\S4--7]{He}. The compact Lie subalgebra
$\fk$ of the semisimple Lie algebra
$\fg=\mathfrak{su}(n+1)$ is the direct sum
$\fk = \fz\oplus [\fk,\fk]$ of the one-dimensional center
$\fz$ and the semisimple ideal
$[\fk,\fk]\cong \mathfrak{su}(n)$. The subalgebra
$\fk$ coincides with the centralizer of
$\fz$ in $\fg$. Here
$\mathfrak{su}(n+1)$ denotes the space of traceless skew-Hermitian
$(n+1)\times(n+1)$ complex matrices and
$\fk=\{(b_{jk})\in \mathfrak{su}(n+1): b_{1j}=b_{j1}=0,\, j= 2,\dotsc, n+1\}$.
Fix on $\fg=\mathfrak{su}(n+1)$ the invariant trace-form given by
$\langle B_1, B_2 \rangle= -2\tr B_1 B_2$,
$B_1,B_2\in \mathfrak{su}(n+1)$. There exists a unique (up to a sign)
element $Z_0\in\fz(\fk)$ such that the endomorphism
$I=\operatorname{ad}_{Z_0}|_{\mathfrak m} \colon {\mathfrak
m}\to{\mathfrak m}$
satisfies $I^2=-\operatorname{Id}_{\mathfrak m}$. Choose
$Z_0$ as
\begin{equation}\label{eq.4.1}
Z_0={\rm diag}(i b_0, i(b_0-1),\ldots, i(b_0-1)),\quad
b_0=n/(n+1).
\end{equation}
By the invariance of the form
$\langle\cdot,\cdot\rangle$ on $\fg$, the form
$\langle\cdot,\cdot\rangle|_{\mathfrak m}$ is
$I$-invariant. Moreover, by the Jacobi identity,
\begin{equation}\label{eq.4.2}
[I\xi,I\eta]=[\xi,\eta], \qquad I[\zeta,\eta]=[\zeta,I\eta]
\qquad
\text{for all } \;\xi,\eta\in{\mathfrak m},\ \zeta\in\fk.
\end{equation}
Denote by $E_{jk}$ the elementary
$(n+1)\times(n+1)$ matrix whose entries are $0$ except for $1$ at the
entry in the $j$th row and $k$th column.
Choose as basis vector $X\in\fa$ the matrix
$X=\tfrac{1}{2} E_{12}-\tfrac{1}{2} E_{21}
\in{\mathfrak m}\subset \mathfrak{su}(n+1)$.
We will show below (using direct matrix calculations)
that this choice is consistent with the notation of the previous
sections, i.e.\ in this case $\langle X,X \rangle=1$
and the restricted root system
$\Sigma$ of $(\fg,\fk,\fa)$ coincides with the set
$\{\pm\varepsilon\}$ if $n=1$ and
$\{\pm\varepsilon,\pm\frac12 \varepsilon\}$ if $n\geqslant2$.
The center $\fz(\fh)$ of the centralizer
$\fh=\fg_{X}\cap\fk$
of $X\in\fa$ in $\fk$ is trivial for $n = 1$ and
one-dimensional for $n\geqslant 2$ (see Table~3.1).
It is easy to verify that $\fz(\fh)={\mathbb R}Z_1$, where
\begin{equation}\label{eq.4.3}
Z_1={\rm diag}\bigl(i b_1,i b_1, i(b_1-1),\ldots, i(b_1-1)\bigr),\quad
b_1=(n-1)/(n+1),\ n\geqslant 2.
\end{equation}
Note that $Z_{1} = 0$ for $n = 1$.

\begin{theorem}\label{th.4.1}
Let $G/K$ be a rank-one Riemannian symmetric
space of compact type with $K$ connected. A $2$-form
$\omega$ on the punctured tangent bundle
$T^{+}(G/K)$ of $G/K$ determines a $G$-invariant
K\"ahler structure, associated to the canonical complex structure
$J^{K}_{c}$, and the corresponding metric
${\mathbf g} = \omega(J^{K}_{c}\cdot,\cdot)$
is Ricci-flat, if and only if the
$2$-form
$\widetilde{\omega} = \bigl((\phi\circ f^+)\circ(\pi_{H}\times
{\rm id})\big)^{*}\omega$ on
$G\times {\mathbb R}^{+}$ may be expressed as
$\widetilde{\omega} = {\rm d}\widetilde{\theta}^{\mathbf a}$,
where
\begin{enumerate}
\item [$(1)$]
for $G/K\in\{ \mathbb{S}^n(n \geqslant 3),
\, {\mathbb H}{\mathbf P}^n(n\geqslant 1),
\,{\mathbb C\mathrm a}{\mathbf P}^2 \}$
the vector-function $\mathbf{a}(x)=f'(x)X$, where
\begin{equation}\label{eq.4.4}
\bigl(f'(x)\bigr)^{m_\varepsilon+m_{\varepsilon/{\scriptscriptstyle 2}}+1}
=C\cdot \int_0^x(\sinh 2t)^{m_\varepsilon}
(\sinh t)^{m_{\varepsilon/{2}}} {\rm d} t+C_1,
\end{equation}
$C,C_1\in{\mathbb R}, \; C > 0, \; C_1 \geqslant 0;$
\item [$(2)$]
for $G/K\in\{{\mathbb C}{\mathbf P}^n(n\geqslant 1)\}$
the vector-function is
\[
 \mathbf{a}(x)=f'(x)X+\frac{c_Z}{\cosh x}[IX,X] -\frac{1}{2}c_{Z}Z_{1},
 \]
where $c_Z$ is an arbitrary real number and
\begin{equation}\label{eq.4.5}
f'(x)=\sqrt{(C^n\sinh^{2n}x+C_1)^{1/n}
+c_Z^2\sinh^2 x\cosh^{-2} x},
\end{equation}
$C,C_1\in {\mathbb R}$, $C>0$,
$C_1\geqslant 0$.
\end{enumerate}
The corresponding $G$-invariant Ricci-flat K\"ahler metric
${\mathbf g} = {\mathbf g}(C,C_{1},c_{Z})$ on
$T^{+}(G/K)$ is uniquely extendable to a smooth complete
metric on the whole tangent bundle
$T(G/K)$ if and only if $C_{1} = 0$ (that is,
$\lim_{x\to 0}f'(x) = 0)$.
\end{theorem}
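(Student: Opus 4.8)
The plan is to unwind the statement, via Theorems~\ref{th.3.2} and~\ref{th.3.5}, into a question about vector-functions $\mathbf a\colon\bbR^+\to\fg_H$, to pin down $\fg_H$ (hence the admissible shape of $\mathbf a$) for each of the four families, to impose conditions (1)--(4) of Theorem~\ref{th.3.5} explicitly, and finally to read the behaviour at the zero section off Corollary~\ref{co.3.4}. By Theorem~\ref{th.3.2}(ii) (here $G$ is semisimple) together with the $G$-equivariant diffeomorphism $\phi\circ f^+\colon G/H\times\bbR^+\to T^+(G/K)$, a $2$-form $\omega$ on $T^+(G/K)$ gives a $G$-invariant K\"ahler structure whose metric $\omega(J^K_c\cdot,\cdot)$ is Ricci-flat if and only if $\widetilde\omega=\bigl((\phi\circ f^+)\circ(\pi_H\times\mathrm{id})\bigr)^*\omega$ lies in $\cR(G\times\bbR^+)$, and by Theorem~\ref{th.3.5} this set equals $\{\rd\widetilde\theta^{\mathbf a}\}$ taken over all $\mathbf a$ of the form~\eqref{eq.3.21} satisfying (1)--(4). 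For $G/K\in\{\mathbb S^n\,(n\ge3),\,\mathbb H\mathbf P^n,\,\mathbb C\mathrm a\mathbf P^2\}$ no restricted root has multiplicity $1$, so~\eqref{eq.3.15} gives $\fg_H=\fa$, forcing $\mathbf a(x)=f'(x)X$ (the setting of Corollary~\ref{co.3.7}). For $G/K=\mathbb C\mathbf P^n$ I work in the matrix model $\fg=\mathfrak{su}(n{+}1)$ of the preamble: one checks $\langle X,X\rangle=1$ and $\Sigma=\{\pm\varepsilon,\pm\tfrac12\varepsilon\}$ (or $\{\pm\varepsilon\}$ if $n=1$) with $m_\varepsilon=1$, computes $\fm_\varepsilon=\bbR(IX)$ and $\fk_\varepsilon=\bbR[IX,X]$ (so by~\eqref{eq.3.5} one takes $\xi^1_\varepsilon=IX$, $\zeta^1_\varepsilon=[IX,X]$), checks that $D_\fa=\{\mathrm{Id},\diag(-1,-1,1,\dots,1)\}$ acts trivially on $\fm_\varepsilon\oplus\fk_\varepsilon$, and concludes $\Sigma_H\cap\Sigma^+=\{\varepsilon\}$ and $\fg_H=\fa\oplus\fz(\fh)\oplus\fm_\varepsilon\oplus\fk_\varepsilon$ with $\fz(\fh)=\bbR Z_1$. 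Thus $\mathbf a(x)=f'(x)X+\alpha Z_1+\tfrac{c}{\cosh x}[IX,X]+\tfrac{c'}{\sinh x}IX$ for scalars $\alpha,c,c'$ and a function $f$.

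\textbf{The families $\mathbb S^n,\,\mathbb H\mathbf P^n,\,\mathbb C\mathrm a\mathbf P^2$.} Here $a^\fk\equiv a^\fm\equiv0$, $z_\fh=0$ and $\Sigma_H\cap\Sigma^+=\varnothing$, so condition (1) is vacuous, $\mathbf w_H=(2f'')$, and $\mathbf w_\tw$ is diagonal with entries $\tfrac{4\lambda'(X)f'}{\sinh 2\lambda'_x}$ (using~\eqref{eq.3.5}); conditions (2)--(3) read $f''>0$, $f'>0$, i.e.\ $(\omega,J^K_c)$ is K\"ahler with potential $2f$, as in Corollary~\ref{co.3.7}. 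Condition (4), $\det\mathbf w_H\cdot\det\mathbf w_\tw\equiv\mathrm{const}$, becomes
\[
f''\,(f')^{\,m_\varepsilon+m_{\varepsilon/{\scriptscriptstyle 2}}}=C'\,(\sinh 2x)^{m_\varepsilon}(\sinh x)^{m_{\varepsilon/{\scriptscriptstyle 2}}},\qquad C'>0;
\]
since $f''(f')^{m}=\tfrac1{m+1}\bigl((f')^{m+1}\bigr)'$ with $m=m_\varepsilon+m_{\varepsilon/{\scriptscriptstyle 2}}$, integrating from $0$ yields~\eqref{eq.4.4} with $C=(m{+}1)C'>0$ and $C_1=\lim_{x\to0}(f')^{m+1}\ge0$; conversely every $f$ with $(f')^{m+1}$ as in~\eqref{eq.4.4} satisfies $f'>0$, $f''>0$ on $\bbR^+$, so all of (1)--(4) hold.

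\textbf{The family $\mathbb C\mathbf P^n$.} The main work is in condition (1). Using the matrix model, $\ad_{[IX,X]}$ and $\ad_{Z_1}$ act on $\fm_{\varepsilon/{\scriptscriptstyle 2}}$ (and, via $T$, on $\fk_{\varepsilon/{\scriptscriptstyle 2}}$) through explicit $\pm\tfrac12,\pm1$ multiples of the restriction of the complex structure $I$; feeding this into~\eqref{eq.3.23} restricted to $\fm_{\varepsilon/{\scriptscriptstyle 2}}$ and simplifying with $\cosh^{-2}(x/2)-\sinh^{-2}(x/2)=-4\sinh^{-2}x$ and $\cosh^{-2}(x/2)+\sinh^{-2}(x/2)=4\cosh x\,\sinh^{-2}x$, the second relation forces $c'=0$ and the first forces $\alpha=-\tfrac12 c$. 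Writing $c_Z=c$ gives the stated $\mathbf a$. With $c'=0$, $\alpha=-\tfrac12 c_Z$ one computes
\[
\mathbf w_H=\begin{pmatrix} 2f'' & 2\ri c_Z\cosh^{-2}x\\[2pt] -2\ri c_Z\cosh^{-2}x & 4f'/\sinh 2x\end{pmatrix},\qquad
\mathbf w_\tw=\tfrac{2f'}{\sinh x}\,\Id+\tfrac{2\ri c_Z}{\cosh x}\,\Omega\quad(n\ge2),
\]
where $\Omega$ is a skew-symmetric matrix with $\Omega^2=-\Id$ (the matrix of $\ad_{Z_1}|_{\fk_{\varepsilon/{\scriptscriptstyle 2}}}$ in an adapted orthonormal basis). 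Putting $v=(f')^2-c_Z^2\tanh^2x$, the eigenvalues of $\mathbf w_\tw$ are $\tfrac{2f'}{\sinh x}\pm\tfrac{2c_Z}{\cosh x}$, so positivity of $\mathbf w_\tw$ (resp.\ of $\mathbf w_H$) is equivalent to $v>0$ (resp.\ to $v>0$ and $f''>0$), while $\det\mathbf w_H=\tfrac{4v'}{\sinh 2x}$ and $\det\mathbf w_\tw=4^{\,n-1}v^{\,n-1}\sinh^{-(2n-2)}x$. Hence condition (4) reads $(v^n)'=\mathrm{const}\cdot(\sinh^{2n}x)'$, i.e.\ $v^n=C^n\sinh^{2n}x+C_1$ with $C>0$ and $C_1\ge0$ (the sign of $C_1$ forced by $v>0$ as $x\to0$), which is~\eqref{eq.4.5}; along such $v$ one has $f''>0$ automatically. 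The case $n=1$ is the same with $\mathbf w_\tw$ absent and $\det\mathbf w_H\equiv\mathrm{const}$; here $\fz(\fh)=0$ gives $\alpha=0$, and $c'=0$, $C_1\ge0$ are forced by the requirement that $(f')^2=v+c_Z^2\tanh^2x$ stay nonnegative as $x\to0$ (a nonzero $c'$ would contribute a term $\sim-\sinh^{-2}x$).

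\textbf{Behaviour at the zero section and completeness.} By Corollary~\ref{co.3.4}, when a smooth extension $\underline\omega_0$ of $\omega$ to $T(G/K)$ exists it is K\"ahler iff $\mathbf w(0)=\lim_{x\to0}\mathbf w(x)$ is positive-definite. If $C_1=0$, then~\eqref{eq.4.4},~\eqref{eq.4.5} give $f'(x)=c_0x+o(x)$ near $0$ with $c_0>0$, so every entry of $\mathbf w(x)$ has a finite limit: $\mathbf w(0)=2c_0\,\Id$ in the first three families, while for $\mathbb C\mathbf P^n$ the blocks have $\det\mathbf w_H(0)=4C$, $\det\mathbf w_\tw(0)=(4C)^{\,n-1}>0$; hence $\mathbf w(0)\succ0$, one verifies that $\omega$ does extend smoothly (for $c_Z=0$ this is the classical smoothness of the Stenzel form, and the $c_Z\ne0$ case is checked directly), and Corollary~\ref{co.3.4} then delivers a $G$-invariant Ricci-flat K\"ahler metric on $T(G/K)$. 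If $C_1>0$, then $f'(0)>0$ while, e.g., $4f'/\sinh 2x\to+\infty$ as $x\to0$, so $\mathbf w(x)$ has no finite limit along any sequence $x_m\to0$, whence by Corollary~\ref{co.3.4} no smooth extension of $\omega$ can be K\"ahler --- the Ricci-flat K\"ahler structure does not extend. Finally, for $C_1=0$ completeness holds: $G/H$ is compact, so it suffices that radial rays have infinite length, i.e.\ $\int^\infty\!\sqrt{\mathbf g(\partial_x,\partial_x)}\,\rd x=\int^\infty\!\sqrt{f''(x)}\,\rd x=\infty$, and~\eqref{eq.4.4},~\eqref{eq.4.5} show $f'(x)$, hence $f''(x)$, grows exponentially as $x\to\infty$. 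The main obstacle is the $\mathbb C\mathbf P^n$ case --- extracting $c'=0$ and $\alpha=-\tfrac12 c$ from the commutation relations~\eqref{eq.3.23}, and spotting the substitution $v=(f')^2-c_Z^2\tanh^2x$ that linearizes the Ricci-flat ODE --- together with verifying, for $C_1=0$, that the form extends smoothly across the zero section, which Corollary~\ref{co.3.4} only handles conditionally.
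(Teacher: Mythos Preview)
Your argument for the classification on $T^+(G/K)$ --- identifying $\fg_H$, imposing conditions (1)--(4) of Theorem~\ref{th.3.5}, and solving the resulting ODE --- follows the paper's strategy and is correct; your substitution $v=(f')^2-c_Z^2\tanh^2x$ is in fact a bit more direct than the paper's two-step substitution ($g_1=(f')^2$, then $g_2=g_1\cosh^2x-c_Z^2\sinh^2x=v\cosh^2x$), though they reach the same solution.

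The extension and completeness parts, however, have genuine gaps.

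\emph{Smooth extension when $C_1=0$.} You assert that ``$\omega$ does extend smoothly \dots\ the $c_Z\ne0$ case is checked directly'', but you do not perform this check, and Corollary~\ref{co.3.4} only applies \emph{after} a smooth extension is already known to exist. The paper handles this by writing down an explicit left $G$-invariant, right $K$-invariant $2$-form $\Delta$ on $G\times\fm^R$ (formulas~\eqref{eq.4.27} and~\eqref{eq.4.31}), checking that it agrees with $\widetilde\omega^R$ on $G\times W^+$ and has kernel containing $\sK$, and then verifying that its coefficients are smooth functions of $\mathbf r^2=\langle w,w\rangle$ precisely when $C_1=0$. For $c_Z\ne0$ the terms involving $[Iw,w]$ make this a nontrivial computation.

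\emph{Non-extension when $C_1>0$.} Your appeal to Corollary~\ref{co.3.4} is misplaced: that corollary presupposes a smooth extension and then tests whether it is K\"ahler. The divergence of $\mathbf w(x)$ as $x\to0$ is not by itself an obstruction to extending $\omega$, because the frame $\{T_j\}$ itself contains $\sinh^{-1}$ factors and blows up. The paper instead shows that the explicit form $\Delta$ diverges along the zero section when $f'(0)>0$, so no smooth extension of $\omega$ exists at all.

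\emph{Completeness for $\mathbb C\mathbf P^n$ with $c_Z\ne0$.} The claim ``it suffices that radial rays have infinite length'' is not justified: the length of a particular curve joining two orbits is only an \emph{upper} bound for their distance. Worse, when $c_Z\ne0$ the vector $(0,\partial_x)$ is \emph{not} $\mathbf g$-orthogonal to the $G$-orbit --- one computes $\mathbf g\bigl((0,\partial_x),(Y,0)\bigr)=-c_Z/\cosh x$ --- so the radial curve is not the normal geodesic. The paper's Lemma~\ref{le.4.2} identifies the correct unit normal field $U$ (which acquires a $Y$-component) and proves the gradient estimate $|\rd x|\le f_U$; this yields the lower bound $\mathrm{dist}(b,c)\ge\int_b^c f_U^{-1}\,\rd x$, which is what is needed. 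Asymptotically $f_U^{-1}(x)\sim\sqrt{f''(x)}$, so your conclusion happens to be right, but the argument as written does not establish it.
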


\noindent {\it Proof.}
By Theorem~\ref{th.3.5} we have to describe all vector-functions
$\mathbf{a} \colon {\mathbb R}^+\to \fg_{H}$
satisfying conditions
$(1)\!\!-\!\!(4)$ of that theorem. Then the 2-form
$\widetilde{\omega}= {\rm d}\widetilde{\theta}^{\,\mathbf{a}}$
belongs to the space ${\mathcal R}(G\times \bbR^+)$.
We consider the following two cases:

\medskip
{\bf(1)}
$G/K\in\bigl\{ \mathbb{S}^n(n\geqslant 3), \, {\mathbb
H}{\mathbf P}^n(n\geqslant 1), \,{\mathbb C\mathrm
a}{\mathbf P}^2 \bigr\}$.
In this case by~(\ref{eq.3.15})
$\fg_H=\fg_\fh=\fa$.
One gets that ${\mathfrak m}_{H} = \fa$ and
$\fk_{H} = 0$. Then
$\mathbf{a}(x)=f'(x)X$, $x\in {\mathbb R}^{+}$.
Let us describe the Hermitian matrix-functions
${\mathbf w}_{H}(x)$ and ${\mathbf w}_{*}(x)$
from Theorem~\ref{th.3.5}.
As it is easily seen, the first matrix ${\mathbf w}_{H}(x)$
contains a unique element
$w_{1|1}(x)=2f''(x)$ and the second one,
${\mathbf w}_{*}(x)$, is diagonal with elements
\begin{equation}\label{eq.4.6}
w_{{}_\varepsilon^j|{}_\varepsilon^j}(x)=
\tfrac{2 f'(x)}{\cosh x\cdot \sinh x},
\quad
w_{{}_{\varepsilon/{\scriptscriptstyle 2}}^k|
{}_{\varepsilon/{\scriptscriptstyle 2}}^k}(x)=
\tfrac{f'(x)}{\cosh \frac12 x\cdot \sinh\frac12 x},
\end{equation}
where $j=1,\dotsc,m_\varepsilon$
and $k=1,\dotsc,m_{\varepsilon/{\scriptscriptstyle 2}}$.
These matrices are positive definite if and only if
$f'(x)>0$ and $f''(x)>0$ for all $x\in\bbR^+$.
Hence the vector-function
$\mathbf{a} \colon {\mathbb R}^+\to\fa$
satisfies conditions
$(1)\!\!-\!\!(4)$ of Theorem~\ref{th.3.5}  (see also
Corollary~\ref{co.3.7}) if and only if
\[
\textstyle
f'(x)>0, \quad
f''(x)>0, \quad
f''(x)\cdot\Bigl(\frac{f'(x)}{\cosh x \sinh x} \Bigr)^{m_\varepsilon}
\Bigl(\frac{f'(x)}{\cosh \frac12 x
\sinh \frac12 x} \Bigr)^{m_{\varepsilon/{2}}}
\equiv\mathrm{const}, \
x\in{\mathbb R}^+.
\]
It is clear that the unique possible solution of these equations
is of form~(\ref{eq.4.4}).

\medskip
{\bf(2)} $G/K={\mathbb C}{\mathbf P}^n$ $(n\geqslant 2)$.
Theorem \ref{th.3.5} was shown for
$G/K={\mathbb C}{\mathbf P}^1\cong\mathbb{S}^2$
in our paper~\cite[Theorem 6.1]{GGM}.
Therefore in this proof we will suppose that $n\geqslant2$.
Since we have chosen the matrix
$X=\tfrac{1}{2} E_{12}-\tfrac{1}{2} E_{21}\in{\mathfrak m}
\subset \mathfrak{su}(n+1)$ as the basis vector $X\in\fa$,
it follows that
\begin{equation}\label{eq.4.7}
\begin{split}
Y & \eqdef IX =[Z_{0},X] = \tfrac{i}{2} E_{12}+\tfrac{i}{2} E_{21}\in\fm
\\
Z & \eqdef[IX,X] = -\tfrac{i}{2} E_{11}+\tfrac{i}{2} E_{22}\in\fk.
\end{split}
\end{equation}
It is easy to verify that
the set $\{X,Y,Z\}$ is an orthonormal system of vectors
in $\fg$ and
\begin{equation}\label{eq.4.8}
[X,Y]=-Z, \qquad [X,Z]=Y, \qquad [Z,Y]=X,
\end{equation}
i.e.\ the vectors $\{X,Y,Z\}$ form
a canonical basis of the Lie algebra isomorphic to
$\mathfrak{su}(2)$. By~(\ref{eq.4.8}),
$\ad_X^2(IX)$ $=-IX$ and as it is easy to verify,
$\ad_X^2(\xi)=-\frac14\xi$ for any vector
$\xi$ from the set of vectors
\begin{equation}\label{eq.4.9}
\xi^{2j-1}_{\varepsilon/{\scriptscriptstyle 2}}=
\tfrac{1}{2} E_{1(2+j)}-\tfrac{1}{2} E_{(2+j)1},
\quad
\xi^{2j}_{\varepsilon/{\scriptscriptstyle 2}}=
\tfrac{i}{2} E_{1(2+j)}+\tfrac{i}{2} E_{(2+j)1},
\quad
j= 1, \dotsc, n-1.
\end{equation}
Defining the restricted root
$\varepsilon\in(\fa^{\mathbb C})^*$
by the relation $\varepsilon'(X)=1$
($\varepsilon(X)=i$), we obtain that
${\mathfrak m}_\varepsilon={\mathbb R}(IX)$
and that the set~(\ref{eq.4.9}) is an orthonormal basis of the space
${\mathfrak m}_{\varepsilon/{\scriptscriptstyle 2}}$
of dimension $2n-2$ (the orthogonal complement to
$\fa\oplus{\mathfrak m}_\varepsilon$ in
${\mathfrak m}$). Moreover,
$I\xi^{2j-1}_{\varepsilon/{\scriptscriptstyle 2}}=
\xi^{2j}_{\varepsilon/{\scriptscriptstyle 2}}$
for each $j= 1, \dotsc, n-1$. Therefore
$\Sigma^+=\{\varepsilon,\frac12 \varepsilon\}$
($n\geqslant 2$).

Let us calculate the subalgebras
$\fg_\fh$ and $\fg_H$ of $\fg$ determined by
relations~(\ref{eq.3.8}) and~(\ref{eq.3.9}).
By~(\ref{eq.3.10}) the space $\fa\oplus\fz(\fh)$,
where $\fa=\bbR X$ and $\fz(\fh)=\bbR Z_1$,
is a Cartan subalgebra of the algebra $\fg_\fh$.
Since $\fz(\fh)$ belongs to the center of $\fg_\fh$
we see that $\rank[\fg_\fh, \fg_\fh]\leqslant \dim\fa=1$,
that is, $\fg_\fh \cong \mathfrak{su}(2)\oplus\fz(\fh)$
if the algebra $\fg_\fh$ is not commutative.

Since by definition $[X,\fh]=0$ and
$\fh\subset\fk$, by~(\ref{eq.4.2}) one gets that
$[IX,\fh]=0$, that is,
$IX\in \fg_\fh$.
By~(\ref{eq.4.8}), the subalgebra in $\fg$
generated by the vectors $X$ and $IX=Y$ is not commutative.
Thus $\fg_\fh$ is not commutative and, consequently,
$\fg_\fh \cong \mathfrak{su}(2)\oplus\fz(\fh)$.
The vectors $\{X,Y,Z,Z_1\}$ form an orthonormal basis of $\fg_\fh$.
Therefore $\Sigma_\fh = \{\pm\varepsilon\}$.

Let us find now
the algebra $\fg_{H}$.
The finite group
$D_\fa$ defined by relation (\ref{eq.3.13}) is given by
\[
\begin{split}
D_\fa
&= \{\exp tX : \exp tX= \exp(-tX)\}\cap K \\
&= \{\exp 4\pi X, \exp 2\pi X\}=\{\diag(1,\dotsc,1),\
\diag(-1,-1,1,\dotsc,1)\}.
\end{split}
\]
It is clear that the group $\Ad(D_\fa)$ acts trivially
on the space generated by the vectors $X,Y,Z$ and $Z_1$.
Therefore by~(\ref{eq.3.14})
we have that $\fg_H=\fg_\fh$
and, consequently, $\Sigma_H=\Sigma_\fh=\{\pm\varepsilon\}$.
Note that $D_\fa\subset H_0\cong
\mathrm{U}(1){\times}\mathrm{SU}(n-1)$,
$\mathrm{U}(1)\cong\{\exp tZ_1, t\in\bbR\}$,
i.e. the subgroup $H$ is connected.

Using properties~(\ref{eq.4.2}) of the automorphism
$I$, we can describe the actions of the operators
$\ad_Y$ and $\ad_Z$ on
${\mathfrak m}\oplus\fk$ in terms of the operators $I$ and
$\ad_X$. Specifically, for any vectors $\xi\in{\mathfrak m}$,
$\zeta\in\fk$, we have
\begin{align}\label{eq.4.10}
[Y,\xi] & = [IX,\xi]
\stackrel{\mathrm{(\ref{eq.4.2})}}=
[-X,I\xi]=-\ad_X I\xi,
\\ \label{eq.4.11}
[Y,\zeta] & = [IX,\zeta]
\stackrel{\mathrm{(\ref{eq.4.2})}}=
I[X,\zeta]=I\ad_X\zeta.
\end{align}
Similarly, for $Z=[Y,X]$, using the Jacobi identity
and relations (\ref{eq.4.10}),~(\ref{eq.4.11}) we obtain that
\begin{align}
[Z,\xi] & =[Y,[X,\xi]]-[X,[Y,\xi]]
=I\ad_X^2\xi+ \ad_X^2I\xi, \label{eq.4.12} \\ \noalign{\medskip}
[Z,\zeta] & =[Y,[X,\zeta]]-[X,[Y,\zeta]]
=-2\ad_XI\ad_X \zeta. \label{eq.4.13}
\end{align}

From the definitions of
$Z_{0}$ and $Z_{1}$ in~(\ref{eq.4.1}) and~(\ref{eq.4.3}),
respectively, it follows that
$\ad_{Z_0}|_{{\mathfrak m}_{\varepsilon/{\scriptscriptstyle 2}}}$
$= \ad_{Z_1}|_{{\mathfrak m}_{\varepsilon/{\scriptscriptstyle 2}}}$.
Moreover, since $b_1 - 1 =2(b_0-1)$, then from~(\ref{eq.4.1})
and~(\ref{eq.4.7}) we obtain that
$Z_1-2Z_0=2Z$. In other words,
\begin{equation}\label{eq.4.14}
\ad_{Z_1}|_{{\mathfrak m}_{\varepsilon/{\scriptscriptstyle 2}}}
=I|_{{\mathfrak m}_{\varepsilon/{\scriptscriptstyle 2}}}
\quad\text{and}\quad
Z-\tfrac12 Z_1=-Z_0.
\end{equation}

The operator-functions in \eqref{eq.3.20} are given here by
\begin{alignat}{2}
\textstyle
R_x|_{{\mathfrak m}_{\varepsilon}\oplus\fk_{\varepsilon}}
& =\textstyle\frac{1}{\cosh x}
\mathrm{Id}_{{\mathfrak m}_{\varepsilon}
\oplus\fk_{\varepsilon}},
&
S_x|_{{\mathfrak m}_{\varepsilon}\oplus\fk_{\varepsilon}}
&\textstyle
=\frac{1}{\sinh x}
\ad_X|_{{\mathfrak m}_{\varepsilon}
\oplus\fk_{\varepsilon}}, \label{eq.4.15} \\
\textstyle
R_x|_{{\mathfrak m}_{\varepsilon/{\scriptscriptstyle 2}}
\oplus\fk_{\varepsilon/{\scriptscriptstyle 2}}}
& =\textstyle\frac{1}{\cosh \frac 12 x}
\mathrm{Id}_{{\mathfrak m}_{\varepsilon/{\scriptscriptstyle 2}}
\oplus\fk_{\varepsilon/{\scriptscriptstyle 2}}},
&
\quad
S_x|_{{\mathfrak m}_{\varepsilon/{\scriptscriptstyle 2}}
\oplus\fk_{\varepsilon/{\scriptscriptstyle 2}}}
&\textstyle
= \frac{2}{\sinh \frac 12 x}
\ad_X|_{{\mathfrak m}_{\varepsilon/{\scriptscriptstyle 2}}
\oplus\fk_{\varepsilon/{\scriptscriptstyle 2}}}.
\label{eq.4.16}
\end{alignat}

Put $\xi^1_{\varepsilon}=Y\in {\mathfrak m}_{\varepsilon}$.
With the notation of the previous subsection,
$\zeta^1_{\varepsilon}=Z\in\fk_{\varepsilon}$.
Now we have to verify conditions $(1)\!\!-\!\!(4)$ of
Theorem~\ref{th.3.5} for the vector-function
\begin{equation}\label{eq.4.17}
\mathbf{a}(x)=a^\fa(x)+a^\fk(x)
+a^{\mathfrak m}(x)+z_\fh=
f'(x)X + c_Z\varphi(x)Z +c_Y\psi(x)Y +c_1Z_1,
\end{equation}
where
\[
f\in C^\infty({\mathbb R}^+,{\mathbb R}),\quad
\quad \varphi(x)=\frac{1}{\cosh x},
\quad \psi(x)=\frac{1}{\sinh x},
\quad c_{Y},c_{Z}, c_{1} \in {\mathbb R}.
\]

Consider now the first condition in~(\ref{eq.3.23}). We have
the splitting
${\mathfrak m}^+={\mathfrak m}_{\varepsilon}\oplus{\mathfrak
m}_{\varepsilon/{\scriptscriptstyle 2}}$.
Taking into account that by its definition
$[\fz(\fh),\fg_\fh]=0$
and ${\mathfrak m}_{\varepsilon}=\bbR Y
\subset\fg_\fh$,
using relations~(\ref{eq.4.15}), we can rewrite the
first condition in~(\ref{eq.3.23}) for the vector
$Y=\xi^1_\varepsilon$ as
\begin{equation}\label{eq.4.18}\textstyle
\frac{1}{\cosh x}\cdot R_x
\bigl[c_{Z}\varphi(x)Z,Y\bigr]
+\frac{1}{\sinh x}\cdot S_x
\bigl[c_Z\varphi(x)Z, \ad_X Y\bigr]=0.
\end{equation}
The first term in~(\ref{eq.4.18}) vanishes because
$[Z,Y]=X\in\fa$ and
$R_x(\fa)=0$; the second term vanishes because
$\ad_X Y=-Z$.

Since in our case $\fm_\tw={\mathfrak m}_{\varepsilon/{\scriptscriptstyle 2}}$
and $\fk_\tw={\mathfrak k}_{\varepsilon/{\scriptscriptstyle 2}}$, then
by Remark~\ref{re.3.1}
$[\fg_H\oplus[\fh,\fh],
{\mathfrak m}_{\varepsilon/{\scriptscriptstyle 2}}\oplus
{\mathfrak k}_{\varepsilon/{\scriptscriptstyle 2}}]\subset
{\mathfrak m}_{\varepsilon/{\scriptscriptstyle 2}}\oplus
{\mathfrak k}_{\varepsilon/{\scriptscriptstyle 2}}$.
Let now
$\xi\in{\mathfrak m}_{\varepsilon/{\scriptscriptstyle 2}}$.
Using relations~(\ref{eq.4.12}),
(\ref{eq.4.13}) and~(\ref{eq.4.16}), expression~(\ref{eq.4.14})
and the fact that
$I\xi\in{\mathfrak m}_{\varepsilon/{\scriptscriptstyle 2}}$,
we can rewrite the first condition in~(\ref{eq.3.23}) as
\begin{equation*}\label{eq.4}
\begin{split}
0& \textstyle
=\frac{c_Z\varphi(x)}{\cosh \frac 12 x}\cdot R_x
\bigl[Z,\xi\bigr]
+\frac{2 c_Z\varphi(x)}{\sinh \frac12 x}\cdot S_x
\bigl[Z,\ad_X \xi\bigr]
+\Bigl(\frac{1}{\cosh^2 \frac 12 x}-\frac{1}{\sinh^2 \frac 12 x}
\Bigr)\cdot[c_1Z_1,\xi] \\
& \textstyle
=\frac{c_Z\varphi(x)}{\cosh^2 \frac 12 x}\cdot(I\ad_X^2\xi+\ad_X^2 I\xi)
+\frac{4 c_Z\varphi(x)}{\sinh^2 \frac 12 x}\cdot \ad_X
(- 2\ad_X I\ad_X^2\xi)  -\frac{c_1}{\cosh^2 \frac 12
x\sinh^2 \frac 12 x}\cdot I\xi.
\end{split}
\end{equation*}
Then
\begin{equation*}
\begin{split}
0& \textstyle =\left(\frac{1}{\cosh^2 \frac 12 x}
+\frac{1}{\sinh^2 \frac 12 x}\right)
\frac{-c_Z}{2\cosh x}I\xi
-\frac{c_1}{\cosh^2 \frac 12 x\sinh^2 \frac 12 x}I\xi
=\frac{1}{\cosh^2 \frac 12 x\sinh^2 \frac 12 x}
(-\tfrac12 c_Z-c_1)I\xi,
\end{split}
\end{equation*}
because $\ad_X^2|_{{\mathfrak m}_{\varepsilon/{\scriptscriptstyle 2}}}
=-\tfrac14
\mathrm{Id}_{{\mathfrak m}_{\varepsilon/{\scriptscriptstyle 2}}}$.
Thus $c_1=-\tfrac12 c_Z$.

We can also rewrite the second condition in~(\ref{eq.3.23})
for $\xi\in{\mathfrak m}_{\varepsilon/{\scriptscriptstyle 2}}$ as
\begin{equation*}
\begin{split}
\textstyle
\frac{2}{\sinh\frac 12 x}\cdot R_x
\bigl[c_Y\psi(x)Y,\ad_X \xi\bigr]
&\textstyle
-\frac{1}{\cosh\frac 12 x}\cdot S_x
\bigl[c_Y\psi(x)Y, \xi\bigr]=0.
\end{split}
\end{equation*}
Taking into account
the relations~(\ref{eq.4.10}),~(\ref{eq.4.11}) and~(\ref{eq.4.16})
we obtain that
\begin{equation*}\textstyle
\frac{2c_Y\psi(x)}{\sinh\frac 12 x\cosh\frac 12 x}\cdot
(I\ad_X^2\xi+\ad_X^2 I\xi)
=-\frac{c_Y}{\sinh\frac 12 x\cosh\frac 12 x \sinh x}\cdot I\xi=0.
\end{equation*}
Thus $c_Y=0$ and therefore the component
$a^{\mathfrak m}(x)$ of $\mathbf{a}(x)$ vanishes.
The second condition in~(\ref{eq.3.23}) holds.

Summarizing the results proved above, we obtain that
for the vector-function (\ref{eq.4.17}),
condition $(1)$ of Theorem~\ref{th.3.5}
for $G/K={\mathbb C}{\mathbf P}^n$ $(n\geqslant 2)$ is
equivalent to the conditions
$\{c_Z  \in{\mathbb R},\ c_1=-\tfrac12 c_Z,\ c_Y=0\}$.

Let us describe the $2\times 2$ Hermitian matrix-function
${\mathbf w}_{H}(x)$,
$2=\dim\fa+\dim{\mathfrak m}_{\varepsilon}$,
accord\-ing to condition
$(2)$ of Theorem~\ref{th.3.5}.

It is clear that
$w_{1|1}(x)=2f''(x)$. The function $w_{1|{}_\varepsilon^{1}}(x)$
is determined by the relation
(for $\lambda=\varepsilon$):
$w_{1|{}_\varepsilon^{1}}(x)=2\ri\frac{c_Z}
{\cosh^2 x}-2\frac{c_Y}{\sinh^2 x}$.

The function $w_{{}_\varepsilon^{1}|{}_\varepsilon^{1}}(x)$
is determined by the relation~(\ref{eq.3.24})
for $\zeta^1_\varepsilon =Z$ and $\xi^1_\varepsilon=Y$.
By relations~(\ref{eq.4.8}) and the invariance of the form
$\langle\cdot,\cdot\rangle$,
$$\textstyle
w_{{}_\varepsilon^{1}|{}_\varepsilon^{1}}(x)
=
-\frac{2\ri}{\sinh^2 x}\bigl\langle
\bigl[c_Z\varphi(x)Z+c_1Z_1,Z\bigr],Z\bigr\rangle
-\frac{2}{\cosh x\sinh x}\bigl\langle
\bigl[f'(x)X,Y\bigr],Z\bigr\rangle
=f'(x)\frac{2}{\cosh x\sinh x}.
$$
Hence, we conclude that the entries of ${\mathbf w}_{H}(x)$ are
\begin{equation}\label{eq.4.19}
w_{11}(x)=2f''(x), \quad
w_{1|{}_\varepsilon^{1}}(x)
=2\ri\tfrac{c_Z}{\cosh^2 x},\quad
w_{{}_\varepsilon^{1}|{}_\varepsilon^{1}}(x)
=\tfrac{2f'(x)}{\cosh x\sinh x}.
\end{equation}

Let us describe the Hermitian $s\times s$-matrix
${\mathbf w}_{*}(x)=\bigl(w_{jk}(x)\bigr)$,
$s=\dim{\mathfrak m}_{\varepsilon/{\scriptscriptstyle 2}}=2n-2$,
with entries $w_{jk}(x)= w_{{}_{\varepsilon/{\scriptscriptstyle 2}}^j|
{}_{\varepsilon/{\scriptscriptstyle 2}}^k}(x)$, $j,k= 1, \dotsc, 2n-2$,
determined by relations (\ref{eq.3.24}):
\begin{equation*}
\begin{split}
w_{jk}(x) = & -\textstyle \frac{2\ri}{\sinh^2 \frac{x}{2}}
\big\langle\bigl[c_Z\varphi(x) Z+c_1Z_1,\,
-2\ad_X \xi^j\bigr], -2\ad_X \xi^k\big\rangle \\ & \textstyle
-\frac{2}{\cosh \frac{x}{2} \sinh \frac{x}{2}}
\big\langle\bigl[f'(x)X,\xi^j\bigr], \,-2\ad_X \xi^k \big\rangle,
\end{split}
\end{equation*}
where we put $\xi^j=\xi^j_{\varepsilon/{\scriptscriptstyle 2}}$ to simplify
notation. Taking into account
relations (\ref{eq.4.13}), (\ref{eq.4.10})
and the commutation relation $[\ad_{Z_1},\ad_X]=0$, we obtain that
\begin{equation*}
\begin{split}
w_{jk}(x)
=&\textstyle
-\frac{2\ri}{\sinh^2 {\frac 12}x}
\bigl(-8c_Z\varphi(x) \langle \ad_X I\ad_X^2 \xi^j, \, \ad_X \xi^k\rangle
+4c_1\langle \ad_X I\xi^j,\ad_X \xi^k \rangle\bigr) \\
&\textstyle
-\frac{2}{\cosh {\frac 12}x  \sinh {\frac 12}x }
(-2)f'(x)\langle \ad_X  \xi^j,\ad_X\xi^k\rangle
\\
=&\textstyle
- \frac{\ri}{\sinh^2 {\frac 12}x}
c_Z\Bigl(\frac{1}{\cosh x} - 1 \Bigr)
\langle I\xi^j,\xi^k \rangle
+\frac{1}{\cosh {\frac 12}x  \sinh {\frac 12}x }
f'(x)\langle  \xi^j,\xi^k\rangle.
\end{split}
\end{equation*}

But the orthonormal basis
$\{\xi^j_{\varepsilon/{\scriptscriptstyle 2}}\}_{j=1}^{2n-2}$
is chosen in such a way that
$\xi^{2j}_{\varepsilon/{\scriptscriptstyle 2}}
= I\xi^{2j-1}_{\varepsilon/{\scriptscriptstyle 2}}$.
Thus from the relations above it follows that
the Hermitian matrix ${\mathbf w}_{*}(x)$
is a block-diagonal matrix, where each block is
an Hermitian $2\times 2$-matrix. Each such block
is determined by the pair of vectors
$\bigl(\xi^{2j-1}_{\varepsilon/{\scriptscriptstyle 2}},
\xi^{2j}_{\varepsilon/{\scriptscriptstyle 2}}\big)$,
$j= 1, \dotsc, n-1$, and it is a
$2\times 2$ Hermitian matrix with the entries
\begin{equation}\label{eq.4.20}
\begin{split}
w_{(2j-1)(2j-1)}(x)=w_{(2j)(2j)}(x)&= \textstyle
\frac{f'(x)}{\cosh {\frac 12}x\sinh {\frac 12}x},\\
w_{(2j-1)(2j)}(x) = \textstyle
\frac{\ri c_Z}{\sinh^2 {\frac 12}x} & \textstyle
\Bigl(1 - \frac{1}{\cosh x}\Bigr).\quad
\end{split}
\end{equation}

It is easily checked (calculating determinants of order $2$) that
vector-function~(\ref{eq.4.17})
satisfies conditions $(2)$, $(3)$ and $(4)$ of
Theorem~\ref{th.3.5} if and only if
all the mentioned Hermitian $2\times 2$ matrices are positive-definite
and $\det {\mathbf w}_{H}(x)$ $\cdot
\det {\mathbf w}_{*}(x)=2^{2n-2}\cdot C^n$, $C>0$,
i.e.\ for all $x\in{\mathbb R}^+$ the following relations hold:
\begin{equation}\label{eq.4.21}
\begin{split}
& \textstyle
\mathbf{(a)}\ f''(x)>0,\quad
\mathbf{(b)}\ \frac{f''(x)f'(x)}{\cosh x\sinh x}
-\frac{c_Z^2}{\cosh^4 x}
>0,\\
& \textstyle
\mathbf{(c)}\ f'(x)>0,\quad \hskip3pt
\mathbf{(d)}\ \frac{f'(x)f'(x)} {\cosh^2 {\frac 12}x\sinh^2 {\frac 12}x}
-\frac{c_Z^2}{\sinh^4 {\frac 12}x}
\Bigl(1-\frac{1}{\cosh x}\Bigr)^2>0,
\end{split}
\end{equation}
and
\begin{equation}\label{eq.4.22}\textstyle
\left(\frac{f''(x)f'(x)}{\cosh x\sinh x}
-\frac{c_Z^2}{\cosh^4 x}
\right)\!\!\left(
\frac{f'(x)f'(x)} {\cosh^2 {\frac 12}x\sinh^2 {\frac 12}x}
-\frac{c_Z^2}{\sinh^4 {\frac 12}x}
\Bigl(1-\frac{1}{\cosh x}\Bigr)^2\right)^{n-1}\!\!=2^{2n-2}C^n.
\end{equation}
However, there exists an exact general solution of equation~(\ref{eq.4.22}).
Indeed, taking into account some well-known identities for
the functions $\cosh x$ and $\sinh x$, and using the substitution
$g_1(x)=(f'(x))^2$ one can rewrite~(\ref{eq.4.22}) as
\begin{equation*}
g_1'(x)= \frac{2c_Z^2\sinh x}{\cosh^3 x} + 2^{2n-1} C^n
\cosh x\sinh x\left(\frac{\cosh^2 x\sinh^2 x}{4g_1(x)
\cosh^2 x - 4c_Z^2\sinh^2 x}
\right)^{n-1}.
\end{equation*}
Next, using the substitution
$g_2(x)=g_1(x) \cosh^2 x-c_Z^2\sinh^2 x$ we obtain the
Bernoulli equation
\begin{equation*}
g'_2(x)=
\frac{2\sinh x}{\cosh x}\cdot g_2(x)+
2 C^n\cosh^3 x\sinh x\left(\frac{\cosh^2 x\sinh^2 x}
{g_2(x)}\right)^{n-1}
\end{equation*}
with solutions
$g_2(x)=\cosh^2 x\left(C^n\sinh^{2n}x+C_1\right)^{1/n}$, $C_1\geqslant 0$,
on the whole semi-axis,  i.e.\ we obtain that
\begin{equation}\label{eq.4.23}
f'(x)=\sqrt{(C^n\sinh^{2n}x+C_1)^{1/n}
+c_Z^2\sinh^2 x\cosh^{-2} x}.
\end{equation}
and therefore
\begin{equation}\label{eq.4.24}
\begin{split}
f''(x)&=(f'(x))^{-1}\bigl(
C^n\cdot(C^n\sinh^{2n}x+C_1)^{\frac{1-n}{n}}\cdot \sinh^{2n-1} x\cosh x \\
&\quad\;+c_Z^2(\tanh x-\tanh^3 x) \bigr).
\end{split}
\end{equation}
For these functions on the whole semi-axis
relations~(\ref{eq.4.21}c) and~(\ref{eq.4.21}a)
hold since $\sinh x>0$ and
$\tanh x>\tanh^3 x$ on this set ($0<\tanh x<1$); also
(\ref{eq.4.21}b) hold, because $\tanh x-\tanh^3 x
=\sinh x\cosh^{-3} x$; and~(\ref{eq.4.21}d) hold,  as
$\sinh^2 x\cosh^{-2}x=
\frac{\cosh^2 {\frac 12}x}{\sinh^2 {\frac 12}x}
\Bigl(1-\frac{1}{\cosh x}\Bigr)^2$.

The form
$\widetilde{\omega} = {\rm d}\widetilde{\theta}^{\mathbf a}$
on $G\times\bbR^+$ determines a unique form $\omega$ on
$G/H\times \bbR^+=G/H\times W^+$  such that
$\widetilde{\omega}=(\pi_H \times \mathrm{id})^\ast \omega$
(see Corollary~\ref{co.3.6}).
Let us study when the form $\omega$
on $G/H\times W^+\cong T^+(G/K)$ admits an
smooth extension to the whole tangent space $T(G/K)$.
To this end we will find the expression of the form
$\omega^R=((f^+)^{-1})^*\omega$ on the space
$G\times_K{\mathfrak m}^R\cong T^+(G/K)$, where, recall,
$f^+\!\colon G/H\times {\mathbb R}^+\to G\times_K{\mathfrak m}^R$ is a
$G$-equivariant diffeomorphism.
However, by the commutativity of diagram~\eqref{eq.3.16}
there exists a unique form $\widetilde\omega^R$ on $G\times{\mathfrak m}^R$
such that
\begin{equation}\label{eq.4.25}
\widetilde\omega^R=\pi^*\omega^R
\quad\text{and}\quad
\widetilde{\omega}=\mathrm{id}^*\widetilde\omega^R.
\end{equation}

Thus it is sufficient to calculate the form $\widetilde\omega^R$
on the space $G\times{\mathfrak m}^R$, because the form $\omega^{R}$
on the space $G\times_{K}{\mathfrak m}^{R}\cong T^{+}(G/K)$
may be extended (in a unique way if the extension
does exist) to the whole tangent space $T(G/K)$ if and
only if the form $\widetilde{\omega}^{R}$  is extendable
(admits an extension to the whole space $G\times {\mathfrak m}$).

By the second expression in (\ref{eq.4.25}),
\begin{equation}\label{eq.4.26}
\widetilde{\omega}^R_{(g,xX)}\bigl(\bigl(\xi_1^l(g),t_1 X\big),
\bigl(\xi_2^l(g),t_2 X\big)\big)=
\widetilde{\omega}_{(g,x)}\bigl( \bigl(\xi_1^l(g),
t_1\tfrac{\partial}{\partial x}\big),
\bigl(\xi_2^l(g),t_2\tfrac{\partial}{\partial x} \big)\big).
\end{equation}
To describe $\widetilde{\omega}^{R}$ we consider again
the two cases (1) and (2):

{\bf(1)} $G/K\in\{ \mathbb{S}^n(n \geqslant 3),
\, {\mathbb H}{\mathbf P}^n(n\geqslant 1),
\,{\mathbb C\mathrm a}{\mathbf P}^2 \}$. Since ${\mathbf a}(x)
= f'(x)X$, by~\eqref{eq.3.22} at the point
$(g,xX)\in G\times W^+$ and from~(\ref{eq.4.26}) we have that
\[
\widetilde{\omega}^{R}_{(g,xX)} \bigl((\xi_1^l(g),t_1X),
(\xi_2^l(g),t_2 X)\bigr)= -\bigl\langle f'(x)X,[\xi_1,\xi_2]
\bigr\rangle +f''(x) \bigl( t_1\bigl\langle  X, \xi_2 \bigr\rangle
-t_2\bigl\langle X, \xi_1 \bigr\rangle \big),
\]
where $\xi_1,\xi_2\in\fg=T_eG$ and
$t_1,t_2\in{\mathbb R}$. Consider on  the whole tangent space
$T_{(g,w)}(G\times{\mathfrak m}^R)$
($w\in {\mathfrak m}^{R} = {\mathfrak m}\setminus\{0\})$, the
bilinear form $\Delta$ given by
\begin{equation}\label{eq.4.27}
\begin{split}
&\Delta_{(g,w)}\bigl((\xi_1^l(g),u_1),(\xi_2^l(g),u_2)\bigr)
=-\bigl\langle \tfrac{f'(\bfr)}{\bfr } w,\, [\xi_1,\xi_2] \bigr\rangle \\
&\qquad +\tfrac{f'(\bfr)}{\bfr } \bigl(\langle u_1, \xi_2 \rangle
-\langle u_2,\xi_1\rangle \bigr)
+\tfrac{1}{\bfr } \left(\tfrac{f'(\bfr)}{\bfr }\right)'
\bigl(\langle u_1,w \rangle \langle w, \xi_2 \rangle
-\langle u_2,w \rangle \langle w, \xi_1 \rangle\bigr),
\end{split}
\end{equation}
where $\xi_1,\xi_2\in\fg=T_eG$,
$u_1,u_2\in {\mathfrak m}=T_w{\mathfrak m}^R$.
Here $\mathbf{r}=\mathbf{r}(w)$, $\mathbf{r}^2(w)\eqdef
{\langle w, w\rangle}$ ($\mathbf{r}(xX)$ $= x$).
It is clear that this form is skew-symmetric.

Since
$\tfrac{1}{\bfr }
\left(\tfrac{f'(\bfr)}{\bfr }\right)'
=\tfrac{\bfr f''(\bfr)-f'(\bfr)}{\bfr ^3}$,
it is easy to verify that
$$
\Delta_{(g,xX)}\bigl((\xi_1^l(g),t_1X),(\xi_2^l(g),t_2 X)\big)=
\widetilde{\omega}^R_{(g,xX)}\bigl((\xi_1^l(g),t_1 X),(\xi_2^l(g),t_2 X)\big),
$$
i.e.\ the restrictions of
$\widetilde{\omega}^{R}$ and $\Delta$ to
$G\times W^{+}\subset G\times\fm^R$ coincide.
Now to prove that the differential
forms $\widetilde{\omega}^R$ and
$\Delta$ coincide on the whole tangent bundle
$T(G\times{\mathfrak m}^R)$ it is sufficient to show that
the form $\Delta$ is left $G$-invariant, right
$K$-invariant and its kernel contains (and therefore
coincides with) the subbundle
$\sK=\ker \pi_{*}$.

Since for each $k\in K$ the scalar product $\langle\cdot,
\cdot\rangle$ is $\Ad_{k}$-invariant and $\Ad_{k}$
is an automorphism of $\fg$, the following relations hold:
\begin{equation}
\label{eq.4.28}
\begin{split}
\Delta_{(g,w)}((\xi_1^l(g),u_1),\, &(\xi_2^l(g),u_2))
=\Delta_{(e,w)} ((\xi_1,u_1),(\xi_2,u_2)) \\
& = \Delta_{(e,\Ad _k w)}(
(\Ad_k \xi_1,\Ad _k u_1),(\Ad_k \xi_2,\Ad_k u_2)).
\end{split}
\end{equation}
Hence, $\Delta$ is left $G$-invariant and right $K$-invariant.

The kernel
${\sK}\subset T(G\times {\mathfrak m})$ of the tangent map
$\pi_{*} \colon T(G\times\fm)\to T(G\times_K\fm)$
is generated by the (left)
$G$-invariant vector fields $\zeta^{L}$,
$\zeta\in \fk$~(\ref{eq.3.17}) on
$G\times{\mathfrak m}$. Then, since
${\mathfrak m}\bot\fk$ and
$\langle w,[w,\zeta]\rangle = 0$, we obtain
\begin{equation*}
\begin{split}
& \Delta_{(g,w)}\bigl((\xi_1^l(g),u_1),\zeta^{L}{(g,w)}\big)=
-\bigl\langle \tfrac{f'(\bfr)}{\bfr } w
,\, [\xi_1,\zeta] \bigr\rangle  +\tfrac{f'(\bfr)}{\bfr }
\bigl(\langle u_1, \zeta \rangle
-\langle [w,\zeta],\xi_1\rangle \bigr) \\
&+\tfrac{1}{\bfr }\left(\tfrac{f'(\bfr)}{\bfr }\right)'
\bigl(\langle u_1,w \rangle \langle w, \zeta \rangle
-\langle [w,\zeta],w \rangle \langle w, \xi_1 \rangle\bigr)
=  -\bigl\langle \tfrac{f'(\bfr)}{\bfr } w,[\xi_1,\zeta] \bigr\rangle
 -\bigl\langle \tfrac{f'(\bfr)}{\bfr }[w,\zeta], \xi_1 \bigr\rangle
 =0.
\end{split}
\end{equation*}
This means that $\sK\subset \ker\Delta$.
Thus $\widetilde{\omega}^R=\Delta$ on $G\times{\mathfrak m}^R$
($\sK =\ker\Delta$ because the form
$\omega$ is nondegenerate).

Expression~(\ref{eq.4.27})
determines a smooth $2$-form on the whole tangent bundle
$T(G\times{\mathfrak m})$ if and only if
$\lim_{x\to 0}f'(x) = 0$, that is,
$C_{1} = 0$. Indeed, if $C_1>0$ it is easy to verify that
$\lim_{x\to 0}f'(x) =C_1^{1/m}$ and $\lim_{x\to 0}f''(x) =0$,
where $m=\dim\fm=m_\varepsilon+m_{\varepsilon/{\scriptscriptstyle 2}}+1$.
Therefore, by~(\ref{eq.4.27}), $\lim_{x\to 0} \Delta_{(e,xX)}
\bigl((\xi_1,u_1),(\xi_2,u_2)\big)=\infty$ for some
vectors $\xi_1,\xi_2\in\fg$, $u_1,u_2\in\fm$ such that
$$
\lim_{x\to 0}\tfrac{f'(x)}{x} \bigl(\langle u_1, \xi_2 \rangle
-\langle u_2,\xi_1\rangle
-\langle u_1,X \rangle \langle X, \xi_2 \rangle
+\langle u_2,X \rangle \langle X, \xi_1 \rangle
\bigr)=\infty.
$$
Let $C_1=0$. Since $\frac{\sinh x}{x}>1$ for $x>0$,
there exists an even real analytic function on the whole axis,
$\psi_2(x)$, such that
\begin{equation}\label{eq.4.29}
f'(x)=x\Bigl(\tfrac{2^{m_\varepsilon}C}{m}+x^2\psi_2(x)\Bigr)^{1/m},
\quad
\tfrac{2^{m_\varepsilon}C}{m}+x^2\psi_2(x)>0,\quad\forall x>0.
\end{equation}
In this case expression~(\ref{eq.4.27})
determines a smooth 2-form on the whole space $G\times \fm$.

We will
denote this form (extension) on
$G\times \fm$ by
$\widetilde{\omega}^{R}_{0}$. There exists a unique
$2$-form $\omega^{R}_{0}$ on
$G\times_{K}\fm\cong T(G/K)$ such that
$\widetilde{\omega}^{R}_{0} = \pi^{*}\omega^{R}_{0}$.
The forms $\omega^R_{0}$ and $\omega^{R}$
coincide, by construction, on the open submanifold
$G\times_{K}\fm^{R}\cong T^+(G/K)$, that is,
$\omega^{R}_{0}$ is a smooth extension of
$\omega^{R}$.

Now we will prove, applying Corollary
\ref{co.3.4}, that this extension is the K\"ahler form
of the metric ${\mathbf g}_0$ on the whole tangent bundle
$T(G/K)$. Indeed,  by~(\ref{eq.4.6}) and~(\ref{eq.4.29})
for $C_1=0$,
$$
\lim_{x\to 0} w_{1|1}(x)=
2\left(\frac{2^{m_\varepsilon}C}{m}\right)^{1/m},
\quad
\lim_{x\to 0}w_{{}_\varepsilon^j|{}_\varepsilon^j}(x)=
\lim_{x\to 0}w_{{}_{\varepsilon/{\scriptscriptstyle 2}}^k|
{}_{\varepsilon/{\scriptscriptstyle 2}}^k}(x)=
2\left(\frac{2^{m_\varepsilon}C}{m}\right)^{1/m},
$$
that is, the corresponding limit diagonal Hermitian matrices
$\lim_{x\to 0}{\mathbf w}_H(x)$ and
$\lim_{x\to 0}{\mathbf w}_\tw(x)$ are positive-definite.
Thus by Corollary~\ref{co.3.4},
$\omega^R_0$ is the K\"ahler form
of the metric ${\mathbf g}_0$ (the extension of ${\mathbf g}$)
on $G\times_K\fm\cong T(G/K)$.
\medskip

{\bf(2)} $G/K ={\mathbb C}{\mathbf P}^{n}$
$(n\geq 2)$. In this case the vector-function
${\mathbf a}$ takes the form
\[
{\mathbf a}(x)=
f'(x)X + c_Z\varphi(x)Z+c_1Z_1
= f'(x)X + c_{Z}\bigl(\varphi(x) -1\big)Z -c_{Z}Z_{0},
\]
because $Z_{1} = 2(Z + Z_{0})$ and $c_1=-\tfrac12 c_Z$.
Here $f'(x)$ is given in (\ref{eq.4.5}),
$\varphi(x) = \frac{1}{\cosh x}$ and
$c_{Z}$ is an arbitrary real number.
Then, from (\ref{eq.3.22}), we have
\begin{equation}\label{eq.4.30}
\begin{split}
&\widetilde{\omega}^{R}_{(g,xX)} \bigl((\xi_1^l(g),t_1X),\,
(\xi_2^l(g),t_2 X)\bigr)= -\bigl\langle f'(x)X+c_{Z}\bigl(\varphi(x)-1\big)Z
-c_{Z}Z_0,[\xi_1,\xi_2] \bigr\rangle \\
&\hskip20mm +f''(x) \bigl( t_1\bigl\langle  X, \xi_2 \bigr\rangle
-t_2\bigl\langle X, \xi_1 \bigr\rangle \bigr)+ c_{Z}\varphi'(x)
\bigl( t_1\bigl\langle Z,
\xi_2 \bigr\rangle -t_2\bigl\langle Z,
\xi_1 \bigr\rangle \bigr),
\end{split}
\end{equation}
where $\xi_1,\xi_2\in\fg=T_eG$ and
$t_1,t_2\in{\mathbb R}$.

Consider on  the whole tangent space
$T_{(g,w)}(G\times\fm^R)$ ($w\ne 0$), the
bilinear form $\Delta$:
\begin{equation}\label{eq.4.31}
\begin{split}
&\Delta_{(g,w)} \bigl((\xi_1^l(g),u_1),(\xi_2^l(g),u_2)\bigr)
= -\bigl\langle \tfrac{f'(\bfr)}{\bfr } w
+\tfrac{c_Z}{\bfr ^{2}}\bigl(\varphi(\bfr) -1\big)[Iw,w]
-c_Z Z_0,[\xi_1,\xi_2] \bigr\rangle\\
&\qquad+ \tfrac{f'(\bfr)}{\bfr }\bigl(\langle u_1,\xi_2\rangle
-\langle u_2,\xi_1\rangle\bigr)
+\tfrac{1}{\bfr }\Bigl(\tfrac{f'(\bfr)}{\bfr }\Bigr)'
\bigl(\langle u_1,w\rangle\langle w,\xi_2 \rangle
-\langle u_2,w\rangle\langle w,\xi_1 \rangle\bigr)\\
&\qquad+c_{Z} \Bigl(\tfrac{\bfr  \varphi'(\bfr)
- 2(\varphi(\bfr)-1)}{\bfr ^{4}}\Bigr)
\bigl(
\langle u_1,w \rangle\langle [Iw,w],\xi_2 \rangle
-\langle u_2,w \rangle\langle [Iw,w],\xi_1 \rangle\big)\\
&\qquad+c_{Z} \Bigl(\tfrac{\bfr  \varphi'(\bfr)
- 2(\varphi(\bfr)-1)}{\bfr ^{4}}\Bigr)
\bigl(\langle u_1,Iw \rangle\langle w,u_2 \rangle
-\langle u_2,Iw \rangle\langle w,u_1 \rangle\bigr)\\
&\qquad+c_Z\tfrac{2(\varphi(\bfr) -1)}{\bfr ^{2}}
\Bigl(\langle[Iu_1,w],\xi_2 \rangle
-\langle[Iu_2,w],\xi_1\rangle
+\langle u_1,Iu_2\rangle\Bigr),
\end{split}
\end{equation}
where $\xi_1,\xi_2\in\fg=T_eG$,
$u_1,u_2\in \fm=T_w\fm^R$. It is clear that this form is
skew-symmetric. From the expression of
$\widetilde{\omega}^{R}$ at the point
$(g,xX)\in G\times W^{+}$ given in~(\ref{eq.4.30}) and taking
into account that $[IX,X]=Z$ and
$\langle X,IX \rangle=0$, it is easy to verify that
$$
\Delta_{(g,xX)}\bigl((\xi_1^l(g),t_1X),(\xi_2^l(g),t_2 X)\big)=
\widetilde{\omega}^R_{(g,xX)}\bigl((\xi_1^l(g),t_1 X),(\xi_2^l(g),t_2 X)\big).
$$

Since for each $k\in K$ the scalar product $\langle \cdot,\cdot \rangle$
is $\Ad_k$-invariant,  $\Ad_k$ is an automorphism of $\fg$  and
$\Ad_k(Z_0)=Z_0$, $\Ad_k I=I \Ad_k$, relations~\eqref{eq.4.28}
hold now for this $\Delta$, that is, $\Delta$
is left $G$-invariant and right $K$-invariant.
We now prove that $\ker \Delta \supset \sK$.
By~(\ref{eq.3.17}), since the form $\Delta$
is left $G$-invariant, right $K$-invariant
and $\Ad(K)({\mathbb R}X)=\fm$,
it is sufficient to show that the vectors $(\zeta,x[X,\zeta])$,
$\zeta\in\fk$, belong to the kernel of $\Delta_{(e,xX)}$.
Indeed, using the fact that $[Z_0,\fk]=0$, $\fk\bot \fm$  and
$\langle\cdot,\cdot\rangle$ is $\Ad(G)$-invariant,
we have that
\begin{equation*}
\begin{split}
\Delta_{(e,xX)}\bigl((\xi_1,u_1),(\zeta,x[X,\zeta])\bigr)
= c_{Z}\bigl(\varphi(x) -1\bigr)\bigl\langle -[IX,X],
[\xi_1,\zeta] \bigr\rangle \qquad\qquad \\
 \qquad\qquad +\tfrac{2}{x}c_Z\big(\varphi(x) -1\bigr)
\cdot\Bigl(\langle[Iu_1,X],\zeta \rangle
-\langle[I[xX,\zeta],X],\xi_1\rangle
+\langle u_1,I[X,\zeta]\rangle\Bigr).
\end{split}
\end{equation*}
This expression vanishes because the endomorphism
$I$ on $\fm$ is skew-symmetric and
\begin{multline*}
-\langle [IX,X],[\xi_1,\zeta]\rangle
-2\langle[I[X,\zeta],X],\xi_1\rangle
\stackrel{(\ref{eq.4.8})}  =
\langle [Z,\zeta],\xi_1\rangle
+2\langle[X,I[X,\zeta]],\xi_1\rangle \\
\stackrel{(\ref{eq.4.13})}=
\langle -2\ad_XI\ad_X \zeta,\xi_1\rangle
+2\langle[X,I[X,\zeta]],\xi_1\rangle=0.
\end{multline*}
Thus the differential forms
$\widetilde{\omega}^R$ and
$\Delta$ coincide on the whole tangent bundle
$T(G\times\fm^R)$.

Our expression~(\ref{eq.4.31}) of $\Delta$
determines a smooth $2$-form on the whole tangent bundle
$T(G\times{\mathfrak m})$ if and only if
$\lim_{x\to 0}f'(x) = 0$, that is,
$C_{1} = 0$. Indeed, if $C_1>0$ it is easy to verify that
$\lim_{x\to 0}f'(x) =C_1^{1/2n}$ and $\lim_{x\to 0}f''(x) =0$.
Therefore by~(\ref{eq.4.31}), $\lim_{x\to 0} \Delta_{(e,xX)}
\bigl((\xi_1,u_1)$, $(\xi_2,u_2)\big)=\infty$ for some
vectors $\xi_1,\xi_2\in\fg$, $u_1,u_2\in\fm$ such that
$$
\lim_{x\to 0} \tfrac{f'(x)}{x} \bigl(\langle u_1, \xi_2 \rangle
-\langle u_2,\xi_1\rangle
-\langle u_1,X \rangle \langle X, \xi_2 \rangle
+\langle u_2,X \rangle \langle X, \xi_1 \rangle
\bigr)=\infty.
$$
Let $C_1=0$. In this case, the expression for the function
$f'(x)$ in~(\ref{eq.4.5}) is independent of
$n$ and there exists an even real analytic function on the whole axis,
$\varphi_2(x)$, such that
\begin{equation}\label{eq.4.32}
f'(x)=x\bigl(C+c^2_Z+x^2\varphi_2(x)\bigr)^{1/2},
\quad
C+c^2_Z+x^2\varphi_2(x)>0,\ \forall x>0.
\end{equation}
Hence by~(\ref{eq.4.5}) the functions
$\tfrac{f'(x)}{x}$ and
$\tfrac{1}{x}\Big(\tfrac{f'(x)}{x}\Big)'$
are even real analytic functions on the whole axis. Also
taking into account that
$\varphi(x)=\tfrac{1}{\cosh x}$ and
$\varphi'(x)=-\varphi(x)\tanh x$ and
$\tanh' x=\varphi^2(x)$ we obtain that
$\varphi(x)=1-\tfrac12 x^2+\tfrac{5}{24}x^4+\varphi_6(x)x^6$,
where $\varphi_6(x)$ is an even real analytic function on the whole
axis ${\mathbb R}$. Therefore the functions
$\tfrac{\varphi(x)-1}{x^2}$ and
$\tfrac{x\varphi'(x)-2(\varphi(x)-1)}{x^4}=\tfrac{5}{12}+
4\varphi_6(x)x^2+\varphi'_6(x)x^3$
are even real analytic functions defined on the whole axis. Therefore
the expression~(\ref{eq.4.31}) determines a smooth 2-form
on the whole tangent space
$T(G/K)$. We will denote, as in the previous cases,
this form (extension) on
$T(G\times \fm)$ by $\widetilde{\omega}^R_0$.

By continuity the form
$\widetilde{\omega}^R_0$ is closed, left $G$-invariant, right $K$-invariant
and ${\mathscr K}\subset{\rm ker}\;\widetilde{\omega}^R_0$.
It is clear that there exists a unique (closed) 2-form
$\omega^R_0$ on $T(G/K)$ such that $\widetilde{\omega}^R_0=\pi^*\omega^R_0$.

Now we will prove, applying Corollary
\ref{co.3.4}, that this extension is the K\"ahler form
of the metric ${\mathbf g}_0$ on the whole tangent bundle
$T(G/K)$. Indeed, the entries of ${\mathbf w}_{H}(x)$ are
determined by expressions~(\ref{eq.4.19}) and therefore
by~(\ref{eq.4.32}),
\begin{equation*}
\lim_{x\to 0} w_{1|1}(x)=2\sqrt{C+c^2_Z},
\quad
\lim_{x\to 0} w_{1|{}_\varepsilon^{1}}(x)
=2\ri\, c_Z,
\quad
\lim_{x\to 0} w_{{}_\varepsilon^{1}|{}_\varepsilon^{1}}(x)
=2\sqrt{C+c^2_Z}.
\end{equation*}
Also from relations~(\ref{eq.4.20}) it follows that
for the block-diagonal Hermitian matrix ${\mathbf w}_{*}(x)$
for each its $2\times 2$ block we have
\begin{equation*}
\begin{split}
\quad \lim_{x\to 0} w_{(2j-1)(2j-1)} & (x)
= \lim_{x\to 0} w_{(2j)(2j)}(x) \textstyle
=2\sqrt{C+c^2_Z}, \\
& \lim_{x\to 0} w_{(2j-1)(2j)}(x)
= 2\ri\, c_Z.
\end{split}
\end{equation*}
It is easy to check
that the corresponding limit diagonal Hermitian matrices
$\lim_{x\to 0}{\mathbf w}_H(x)$ and
$\lim_{x\to 0}{\mathbf w}_\tw(x)$ are positive-definite.
Thus by Corollary~\ref{co.3.4},
$\omega^R_0$ is the K\"ahler form
of the metric ${\mathbf g}_0$ (the extension of ${\mathbf g}$)
on $G\times_K\fm\cong T(G/K)$.
\medskip

\smallskip

Let us prove that the
metric ${\mathbf g}_0$ determined by the form $\omega^R_0$
on the whole tangent bundle $T(G/K)\cong G\times_K\fm$ is complete.

First of all suppose that
$\omega^R_0$ is determined by the vector-function
${\mathbf a}(x)=f'(x)X$. By Corollary~\ref{co.3.7}, such a metric
admits a $G$-invariant potential function $2f(r)$ on
$T(G/K)\setminus G/K$, where
$r$ is the norm function determined by a
$G$-invariant metric on $G/K$. Since in our cases
$f(x)$ is the restriction of an even smooth function on the
whole axis $\bbR$, there exist a smooth extension of
$2f(r)$ to the whole tangent bundle $T(G/K)$.
By continuity, this extension is a potential function on
$T(G/K)$. Now, Stenzel described all
$G$-invariant K\"ahler structures
$(\omega, J^K_c)$ on $T(G/K)$, where
$G/K$ is a compact symmetric space of rank one admitting a
$G$-invariant potential function~\cite{St}.
Thus the set of metrics $c{\mathbf g}_0$, $c>0$, coincides with
Stenzel's set of metrics. The completeness of these metrics is proved
in Stenzel's paper~\cite{St} (see also another proof of this
fact in Mykytyuk~\cite{My}).

Let us prove that the
metric ${\mathbf g}_0$ determined by the form $\omega^R_0$
on the whole tangent bundle $T(G/K)\cong G\times_K\fm$ is complete
if $G/K={\mathbb C}{\mathbf P}^n$, $n\geqslant 2$.
To this end, consider again its
description~\eqref{eq.4.30} on the space
$G/H\times \bbR^+$ ($G=\mathrm{SU}(n{+}1)$,
$H\cong\mathrm{U}(1){\times}\mathrm{SU}(n-1)$).
For our aim it is sufficient to calculate the
distance $\mathrm{dist}(b,c)$ between the compact subsets
$G/H\times\{b\}$ and $G/H\times\{c\}$, where
$\mathrm{dist}(b,c)= \inf \{d(p_b,p_c), p_b\in G/H\times\{b\},
p_c\in G/H\times\{c\}\}$.
Since the sets
$G/H\times\{x\}$ are compact, it is clear that the metric
${\mathbf g}_0$ is complete if and only if for some $b>0$
one has $\lim_{c\to\infty}\mathrm{dist}(b,c)=\infty$.

To calculate the function $\mathrm{dist}(b,c)$
note that the tangent bundle $T(G/K)\cong G\times_ K\fm$
is a cohomogeneity-one manifold, i.e.\ the orbits of the
action of the Lie group $G$ have codimension one.
We will use only one fundamental fact
on the structure of these manifolds~\cite{Al}:
A unit smooth vector field $U$ on a $G$-invariant
domain $D\subset T(G/K)$ which is
${\mathbf g}_0$-orthogonal to each $G$-orbit in $D$
is a geodesic vector field,
i.e.\ its integral curves are geodesics of  the metric
${\mathbf g}_0$.

We now describe such a vector field $U$ on the domain
$G\times \bbR^+\cong T(G/K)\setminus G/K$. Put
\begin{equation}\label{eq.4.33}
f_U(x)=
\left(\frac{f'(x)}{f'(x)f''(x)
+c_Z^2\varphi(x)\varphi'(x)}\right)^{1/2}, \quad x\in\bbR^+,
\end{equation}
where, recall, $\varphi(x)=\tfrac{1}{\cosh x}$.
\begin{lemma}\label{le.4.2}
Such a unit vector field $U$ on $G/H\times \bbR^+$ is $G$-invariant
and at the point $(o,x)$, $o=\{H\}$, $x\in \bbR^+$,
is determined by the expression
\begin{equation}\label{eq.4.34}
U(o,x) = f_U(x)\cdot
\left(-\frac{c_Z\varphi'(x)}{f'(x)} Y,
\,\frac{\partial}{\partial x}\right).
\end{equation}
For the coordinate function
$x$ on $G/H\times \bbR^+$ the following inequality holds
\begin{equation}\label{eq.4.35}
\big|\rd x_{(o,x)}\bigl(\xi,\,
t\tfrac{\partial}{\partial x}\big)\big|\leqslant
f_U(x)\cdot
\big\|(\xi,t\tfrac{\partial}{\partial x})\big\|_{(o,x)},
\end{equation}
where $\bigl(\xi,\,t\tfrac{\partial}{\partial x}\big)\in
T_{(o,x)}(G/H\times \bbR^+)=(\fm\oplus\fk^+)\times\bbR$ and
$\|\cdot\|$ is the norm given by the metric~${\mathbf g}$.
\end{lemma}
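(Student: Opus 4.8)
The plan is to exploit $G$-equivariance and carry out everything at one base point $(o,x)\in G/H\times\bbR^+$ with $o=\{H\}$. The $G$-orbit through $(o,x)$ is $G/H\times\{x\}$, so its tangent space there is $T_o(G/H)\times\{0\}\cong(\fm\oplus\fk^+)\times\{0\}$; since $G/H\times\bbR^+\cong T(G/K)\setminus G/K$ has cohomogeneity one, the $\mathbf{g}$-normal line to this orbit is one-dimensional, so a unit normal is determined up to sign. The first step is to make $\mathbf{g}=\omega(J^K_c\cdot,\cdot)$ explicit at $(o,x)$: pulling $\omega$ back by $\pi_H\times\mathrm{id}$ gives $\rd\tith^{\mathbf{a}}=\rd x\wedge\tith^{\mathbf{a}'}-\tio^{\mathbf{a}}$ with $\mathbf{a}$ as in part~$(2)$ of Theorem~\ref{th.4.1}, so for $\xi_1,\xi_2\in\fm\oplus\fk^+$ and $t_1,t_2\in\bbR$,
\[
\omega_{(o,x)}\bigl((\xi_1,t_1\tfrac{\partial}{\partial x}),(\xi_2,t_2\tfrac{\partial}{\partial x})\bigr)
= t_1\langle\mathbf{a}',\xi_2\rangle-t_2\langle\mathbf{a}',\xi_1\rangle-\langle\mathbf{a},[\xi_1,\xi_2]\rangle ,
\]
while $J^K_c$ is given by~\eqref{eq.3.18}, which for what follows reduces to $J^K_c(Y,0)=(-\coth x\cdot Z,0)$ (recall $\xi^1_\varepsilon=Y$, $\zeta^1_\varepsilon=Z$, $\varepsilon'_x=x$) and $J^K_c(0,\tfrac{\partial}{\partial x})=(-X,0)$, the latter forced by $(J^K_c)^2=-\Id$.

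Second, I would show that $V_0\eqdef(-\tfrac{c_Z\varphi'(x)}{f'(x)}Y,\tfrac{\partial}{\partial x})$ is $\mathbf{g}$-orthogonal to the orbit and has norm $f_U(x)^{-1}$. Using the above, $J^K_c V_0=(w_0,0)$ with $w_0=-X+\tfrac{c_Z\varphi'(x)}{f'(x)}\coth x\cdot Z=-X-\tfrac{c_Z\varphi(x)}{f'(x)}Z$ (here $\varphi=1/\cosh x$, hence $\varphi'\coth x=-\varphi$). Then $\mathbf{g}(V_0,(v,0))=\omega(J^K_cV_0,(v,0))=-\langle[\mathbf{a},w_0],v\rangle$ for all $v\in\fm\oplus\fk^+$, so it suffices to check $[\mathbf{a},w_0]=0$; with $\mathbf{a}=f'X+c_Z\varphi Z+c_1Z_1$ this is a short bracket computation using~\eqref{eq.4.8}, $[Z_1,X]=0$ and $[Z_1,Z]=0$ (the last from $Z_1=2(Z+Z_0)$ and $[Z_0,\fk]=0$, cf.~\eqref{eq.4.14}). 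Likewise $\|V_0\|^2=\omega(J^K_cV_0,V_0)=-\langle\mathbf{a}',w_0\rangle-\bigl\langle\mathbf{a},\bigl[w_0,-\tfrac{c_Z\varphi'}{f'}Y\bigr]\bigr\rangle$; the second term vanishes by the same sort of cancellation (the $Z_1$-contribution dropping because $Z_1$ is orthogonal to $X$ and $Z$), and $-\langle\mathbf{a}',w_0\rangle=\tfrac{f'f''+c_Z^2\varphi\varphi'}{f'}=f_U(x)^{-2}$ directly from~\eqref{eq.4.33} — note $f_U$ is real and positive because $f'f''+c_Z^2\varphi\varphi'=f'f''-c_Z^2\sinh x\cosh^{-3}x>0$ by condition~$\mathbf{(b)}$ of~\eqref{eq.4.21}. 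Hence $U=f_U\,V_0$ is a unit normal to the orbit, which is exactly~\eqref{eq.4.34}. It is $G$-invariant because $U(o,x)$ is $\Ad(H)$-fixed: $Y=IX\in\fg_H$ (established in the proof of the theorem) and $H$ fixes the $\bbR^+$-factor; and it is smooth on $G/H\times\bbR^+$ since $f_U,f',\varphi'$ are smooth there and $f_U>0$. Alternatively, one may simply invoke that the unit normal is unique up to sign and that the orbit foliation together with $\mathbf{g}$ are $G$-invariant.

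Third, for~\eqref{eq.4.35} I would use the $\mathbf{g}$-orthogonal decomposition $(\xi,t\tfrac{\partial}{\partial x})=cU+V$ with $V$ tangent to the orbit, hence of the form $(v,0)$, and $\mathbf{g}(U,V)=0$. Then $\|(\xi,t\tfrac{\partial}{\partial x})\|^2=c^2+\|V\|^2\geqslant c^2$, and applying $\rd x$ (which annihilates $V$ and sends $U$ to $f_U(x)$) gives $t=\rd x_{(o,x)}(\xi,t\tfrac{\partial}{\partial x})=c\,f_U(x)$; therefore
\[
\bigl|\rd x_{(o,x)}(\xi,t\tfrac{\partial}{\partial x})\bigr|=|c|\,f_U(x)\leqslant f_U(x)\,\bigl\|(\xi,t\tfrac{\partial}{\partial x})\bigr\|_{(o,x)} .
\]

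The main obstacle is the bookkeeping in the two bracket computations of the second step: keeping straight the $J^K_c$-action on $\fk_\varepsilon$ and on $\partial/\partial x$, the orthogonality relations among $X,Y,Z,Z_1$ (so that the $Z_1$-terms disappear from the relevant inner products), and the elementary identities $\varphi'=-\varphi\tanh x$ and $\varphi'\coth x=-\varphi$ that produce the two cancellations. Once those are organized, the rest is formal.
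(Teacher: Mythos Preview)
Your argument is correct. For the first assertion (that $U$ is the unit normal to the orbits) your computation is essentially the paper's, reorganized: the paper checks that $\widetilde Y=(bY,\partial/\partial x)$ is $\omega$-orthogonal to $J^K_c\bigl(V(o,x)\bigr)$ and then evaluates $\omega(J^K_cU,U)=1$, whereas you first compute $J^K_cV_0=(w_0,0)$ and observe that the vanishing of $\mathbf g(V_0,(v,0))=-\langle[\mathbf a,w_0],v\rangle$ for all $v\in\fm\oplus\fk^+$ follows from the bracket identity $[\mathbf a,w_0]=0$. These are the same verification viewed from opposite sides of the Hermitian pairing, and your packaging via $[\mathbf a,w_0]=0$ is tidy.

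For the inequality~\eqref{eq.4.35} your route is genuinely different and shorter. The paper constructs the Hamiltonian vector field $\mathtt{H^x}$ of the coordinate $x$, identifies $\|\mathtt{H^x}\|=f_U$, and then applies the Cauchy--Schwarz inequality to $\rd x(\cdot)=\mathbf g(\cdot,J^K_c\mathtt{H^x})$. You instead use the $\mathbf g$-orthogonal splitting $(\xi,t\,\partial/\partial x)=cU+(v,0)$, read off $t=c\,f_U(x)$ from the $\partial/\partial x$-component, and conclude by $c^2\leqslant c^2+\|(v,0)\|^2$. Your argument avoids solving for $\mathtt{H^x}$ and needs only that $U$ is a unit normal to a codimension-one orbit; the paper's approach, on the other hand, records the explicit form of $\mathtt{H^x}$, which could be useful elsewhere but is not needed for the lemma itself.
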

\noindent {\it Proof} (of Lemma)
Since the vector field $U$ is unique (up to sign),
it is sufficient to verify that
each vector $U(o,x)$ in~(\ref{eq.4.34}) is
${\mathbf g}$-orthogonal to the $G$-orbit through
$(o,x)$, i.e. to the subspace
$V(o,x) \subset T_{(o,x)}(G/H$ $\times \bbR^+)$
generated by the vectors
$(\xi,0)$, $\xi\in\fm\oplus\fk^+$, and that
$\|U(o,x)\|=1$.

Using expression~\eqref{eq.4.30} for the form
$\tio^R$ we obtain the following expression for the form
$\omega$ at $(o,x)$:
\begin{equation}\label{eq.4.36}
\begin{split}
&\omega_{(o,x)}\Bigl(\bigl(\xi_1,t_1\tfrac{\partial}{\partial x}\bigr),\,
\bigl(\xi_2,t_2\tfrac{\partial}{\partial x}\bigr)\Bigr)
= -\bigl\langle f'(x)X+c_{Z}\bigl(\varphi(x)-1\big)Z
-c_{Z}Z_0,[\xi_1,\xi_2] \bigr\rangle \\
&\hskip20mm +f''(x) \bigl( t_1\bigl\langle  X, \xi_2 \bigr\rangle
-t_2\bigl\langle X, \xi_1 \bigr\rangle \bigr)+ c_{Z}\varphi'(x)
\bigl( t_1\bigl\langle Z,
\xi_2 \bigr\rangle -t_2\bigl\langle Z,
\xi_1 \bigr\rangle \bigr),
\end{split}
\end{equation}
where $\xi_1,\xi_2\in\fm\oplus\fk^+$, $t_1,t_2\in\bbR$.

Fix a point $(o,x)\in G/H\times\bbR^+$
and consider a tangent vector $\widetilde Y=\left(b Y,\,
\tfrac{\partial}{\partial x}\right)$, $b\in\bbR$,
at $(o,x)$. This vector is
${\mathbf g}$-orthogonal to
$V(o,x)$ if and only if this vector is
$\omega$-orthogonal to the subspace $J^K_c(V(o$, $ x))$ generated by
the vectors $(\xi_1,t_1 \tfrac{\partial}{\partial x})$,
$\xi_1\in\fm^+\oplus\fk^+$ $\bigl(\langle \xi_1, X\rangle=0\bigr)$,
$t_1\in\bbR$, because by~\eqref{eq.3.18},
\begin{equation}\label{eq.4.37}
J_c^K(o,x)(X,0)= \bigl(0,\,\tfrac{\partial}{\partial x}\big),\quad
J_c^K(o,x)(Y,\,0)= \bigl(-\tfrac{\cosh x}{\sinh x}Z,0 \bigr)
\end{equation}
and $J_c^K(o,x)(\xi^{j}_{\varepsilon/{\scriptscriptstyle 2}},
\,0)=(-\tfrac{\cosh x}{\sinh x}
\zeta^{j}_{\varepsilon/{\scriptscriptstyle 2}},0)$,
$j=1,\ldots,2(n-1)$. By~(\ref{eq.4.36}) for any
$\xi_1\in\fm^+\oplus\fk^+$,
$t_1\in\bbR$ we obtain
\begin{equation}\label{eq.4.38}
\begin{split}
\omega_{(o,x)}\bigl(\bigl(\xi_1,t_1\tfrac{\partial}{\partial x}\big),
\bigl(b Y,\tfrac{\partial}{\partial x}\big)\big)
&=b\bigl\langle [ f'(x)X+c_{Z}\bigl(\varphi(x)-1\big)Z
-c_{Z}Z_0, Y], \xi_1 \bigr\rangle \\
+f''(x) \bigl( t_1\bigl\langle  X, b Y \bigr\rangle
&-\bigl\langle X, \xi_1 \bigr\rangle \bigr)+ c_{Z}\varphi'(x)
\bigl( t_1\bigl\langle Z,
b Y \bigr\rangle -\bigl\langle Z,
\xi_1 \bigr\rangle \bigr) \\
&=-(bf'(x)+c_Z\varphi'(x))\langle Z, \xi_1 \bigr\rangle,
\end{split}
\end{equation}
because $\xi_1\bot X$ and $[Z_0,Y]=IY=-X$
(see also relations~\eqref{eq.4.8}). By~(\ref{eq.4.38})
the vector $U(o,x)=f_U(x)\widetilde Y$ with
$b=-c_Z\varphi'(x)/f'(x)$ is ${\mathbf g}$-orthogonal
to the subspace $V(o,x)$.

But by~(\ref{eq.4.37}), $J_c^K(o,x)
\left(-\tfrac{c_Z\varphi'(x)}{f'(x)} Y,\,
\tfrac{\partial}{\partial x}\right)=
\left(-\tfrac{c_Z\varphi(x)}{f'(x)} Z-X,\,0\right)$
because $\varphi'(x)=-\varphi(x)\tanh x$.
Taking into account relations~\eqref{eq.4.36}, (\ref{eq.4.31}),
(\ref{eq.4.8}) and the fact that $\langle Z_0,Z\rangle=-1$
we obtain that  $\displaystyle \omega(J_c^K(U),U)=
f^2_U \left(f''+\tfrac{c_Z^2\varphi'\varphi}{f'}\right)\equiv 1$,
i.e. $\|U\|\equiv 1$.

To prove the inequality in the statement it is sufficient to find
the Hamiltonian vector field $\mathtt{H^x}$ of the function $x$.
This vector field is $G$-invariant as so are the form $\omega$ and the
function $x$. Let us show that
$\mathtt{H^x}(o,x) =\bigl(a(x) X+c(x) Z, 0\big)$, where
$a,c$ are some functions of $x$.
Indeed, using relations~(\ref{eq.4.36}), (\ref{eq.4.7}), (\ref{eq.4.8}),
$[Z_0,Z]=0$ and the invariance of the form
$\langle\cdot,\cdot\rangle$, we obtain the following
expression at the point $(o,x)$
for any $\xi_1\in\fm\oplus\fk^+$, $t_1\in\bbR$:
\begin{equation}\label{eq.4.36x}
\begin{split}
\omega\Bigl(\bigl(\xi_1,t_1\tfrac{\partial}{\partial x}\bigr),&\,
\bigl(a X+c Z, 0\big)\Bigr)
= -\bigl\langle f'X+c_{Z}\bigl(\varphi-1\big)Z
-c_{Z}Z_0,[\xi_1,a X+c Z] \bigr\rangle \\
&+f''  t_1\bigl\langle  X, a X+c Z \bigr\rangle
+ c_{Z}\varphi'  t_1\bigl\langle Z,a X+c Z \bigr\rangle \\
&=(cf'-c_Z a\varphi)\bigl\langle Y,\xi_1 \bigr\rangle
+(af''+c_Zc\varphi')t_1,
\end{split}
\end{equation}
Now it is easy to see that
$\omega\bigl((\xi_1,t_1 \tfrac{\partial}{\partial x}),
\,\mathtt{H^x}\bigr)\eqdef
\rd x \bigl(\xi_1, \,t_1 \tfrac{\partial}{\partial x}\bigr)=t_1
$ at the point $(o,x)$ for arbitrary $t_1\in\bbR$,
$\xi_1\in\fm\oplus\fk^+$ if and only if
\begin{equation}\label{eq.4.39}
a=c\cdot\tfrac{f'}{c_Z\varphi}
\quad\text{and}\quad
\quad c=\tfrac{c_Z\varphi}{f''f'+c_Z^2\varphi'\varphi}.
\end{equation}
Since $J^K_c(\mathtt{H^x})(o,x) =\bigl(c\,\tfrac{\sinh x}{\cosh x}Y,
a \tfrac{\partial}{\partial x}\big)$
and $a=f_U^2$ we obtain at the point $(o,x)$
\begin{equation*}
\|\mathtt{H^x}\|^2\eqdef
\omega\bigl((c\tfrac{\sinh x}{\cosh x}Y,
a \tfrac{\partial}{\partial x}),(a X+c Z, 0)\bigr) =
\rd x \bigl(c\tfrac{\sinh x}{\cosh x}Y,
a \tfrac{\partial}{\partial x} \bigr)=a=f_U^2.
\end{equation*}
Now, by the Cauchy-Schwarz inequality for metrics one has
at the point $(o,x)$
\begin{equation*}
\begin{split}
\big|\rd x(\xi_1,\,t_1\tfrac{\partial}{\partial x})\big|
= & \;
\big|\omega((\xi_1,\,t_1 \tfrac{\partial}{\partial x}),
\mathtt{H^x})\big|
=\big|{\mathbf g}((\xi_1,\,t_1 \tfrac{\partial}{\partial x}),
J^K_c(\mathtt{H^x}))\big| \\ \noalign{\smallskip}
\; \leqslant & \;
\big\|J^K_c(\mathtt{H^x})\big\|\cdot \big\|(\xi_1,\, t_1
\tfrac{\partial}{\partial x})\big\|
=\|\mathtt{H^x}\|\cdot \big\|(\xi_1,\, t_1
\tfrac{\partial}{\partial x})\big\|,
\end{split}
\end{equation*}
that is, we obtain~\eqref{eq.4.35}. \qed

Using now the vector field $U$ we shall
calculate the distance between the level sets $G/H\times\{b\}$ and
$G/H\times\{c\}$ in $G/H\times \bbR^+$
with respect to the metric ${\mathbf g}$.
Let $\gamma(t)=(\widehat g(t)H$, $\widehat x(t))$, $t\in[0,T]$, be the
integral curve of the vector field $U$ with
initial point $p_b$ in $G/H\times\{b\}$, that is, $\widehat x(0)=b$.
There exists a function
$h$ on  ${\mathbb R}^+$ such that
the function $h(\widehat x(t))$ is linear in $t$.
It is easy to verify that
$\displaystyle h(x)=\int_{b}^{x}\frac{\rd s}{f_U(s)}\, $,
because by~(\ref{eq.4.34})
\begin{equation*}
\frac{\rd}{\rd t}h\bigl(\widehat x(t)\bigr)
=h'\bigl(\widehat x(t)\big)\cdot \rd x\bigl(\gamma'(t)\bigr)
=h'\bigl(\widehat x(t)\big)\cdot \widehat x'(t)
= h'\bigl(\widehat x(t)\big)\cdot
\bigl(f_U(\widehat x(t))\bigr)=1.
\end{equation*}
Suppose that $p_c\in G/H\times\{c\}$, where $p_c=\gamma(t_c)$,
$t_c\in[0, T]$.
Since the curve $\gamma$ is a geodesic, the length of the curve
$\gamma(t)$, $t\in[0,t_c]$, from
$p_b$ to $p_c$ is $t_c=h(x(p_c))-h \bigl(x(p_b)\big)=h(c)-h(b)$.
Thus $\mathrm{dist}(b,c)\geqslant h(c)-h(b)$.

For any other curve $\gamma_1(t)=\bigl(\widehat g_1(t)H,\widehat x_1(t)\big)$,
with $\|\gamma_1'(t)\|=1$, starting at the point $p_b$,  and ending
at a point $p^1_c\in G/H\times\{c\}$, $p^1_c= \gamma_1(t_c^1)$
(of length $t^1_c$), we obtain by Lemma~\ref{le.4.2}
\begin{equation*}
\frac{\rd}{\rd t}h\bigl(\widehat x_1(t)\bigr)
=h'\bigl(\widehat x_1(t)\big)\cdot \rd x
\bigl(\gamma_1'(t)\bigr)\leqslant \frac{1}{f_U\bigl(\widehat x_1(t)\big)}\cdot
f_U(\widehat x_1(t))\bigr)\cdot \|\gamma'_1(t)\|=1.
\end{equation*}

Thus $h(c)-h(b)\leqslant t^1_c$ and the length $t^1_c$ of
the curve $\gamma_1$  from $p_b$ to $p^1_c$ is not less than
the length of the curve $\gamma(t)$, $t\in[0,t_c]$.
Thus, the distance between the level surfaces $G/H\times\{b\}$ and
$G/H\times\{c\}$ is  $|h(c)-h(b)|$.

Now, since by~\eqref{eq.4.23} and~\eqref{eq.4.24} for $C_1=0$,
\begin{align*}
f'(x)&=\sqrt{C\sinh^2 x+c_Z^2\sinh^2 x\cosh^{-2}x}, \\
f''(x)&={\bigl(C\sinh x\cosh x
+c_Z^2 \sinh x\cosh^{-1}x
-c_Z^2 \sinh^{3} x\cosh^{-3}x\bigr)}/{f'(x)}, \notag
\end{align*}
we obtain that $f'(x)\sim \sqrt{C}\sinh x$,
$f''(x)\sim \sqrt{C}\sinh x$ and, by~(\ref{eq.4.33}),
$\frac{1}{f_U(x)}\sim\bigl(\sqrt{C}\sinh x\big)^{1/2} $ as
$x\to\infty$. Therefore
$\lim_{x\to\infty} h(x)=\infty$. Hence the metric
${\mathbf g}_0={\mathbf g}_0(C,c_Z, 0)$
(that is, for $C_1=0$)  on the tangent bundle
$T(G/K)$ is complete for any $C>0$, $c_Z\in\bbR$. \qed
\bigskip

It is well known that ${\mathbb R}{\mathbf P}^n\cong \mathbb{S}^n/\bbZ_2$
as ${\mathbb R}{\mathbf P}^n=\mathrm{SO}(n{+}1)/\mathrm{O}(n)$
and $\mathbb{S}^n=\mathrm{SO}(n{+}1)/\mathrm{SO}(n)$
$(n\geqslant 2)$. Hence each $\mathrm{SO}(n{+}1)$-invariant
Ricci-flat K\"ahler structure on $T{\mathbb R}{\mathbf P}^n$
is uniquely determined by a $\bbZ_2$-invariant
Ricci flat K\"ahler structure on $T{\mathbb S}^n$.

\begin{corollary}\label{co.4.3}
If $n\geqslant 3$, each
$G$-invariant Ricci-flat K\"ahler structure
$({\mathbf g}(C,C_1), J_c^K)$ on the punctured tangent
bundle
$T^+(G/K)=T^+(\mathrm{SO}(n{+}1)/\mathrm{SO}(n))=T^+{\mathbb S}^n$
determines an invariant Ricci-flat K\"ahler structure on
$T^+{\mathbb R}{\mathbf P}^n$. If $n=2$, the
$G$-invariant Ricci-flat K\"ahler structure
$({\mathbf g}(C,C_1, c_Z), J_c^K)$ on
$T^+(G/K)=T^+(\mathrm{SO}(3)/\mathrm{SO}(2))=T^+{\mathbb S}^2$
determines an invariant Ricci-flat K\"ahler structure on
$T^+{\mathbb R}{\mathbf P}^2$ if and only if
$c_Z=0$. All these invariant Ricci-flat K\"ahler metrics on
$T^+{\mathbb R}{\mathbf P}^n$ are uniquely extendable to
complete metrics on the whole tangent bundle
$T{\mathbb R}{\mathbf P}^n$,
$n\geqslant 2$, if and only if $C_1=0$.
\end{corollary}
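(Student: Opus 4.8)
The plan is to derive Corollary~\ref{co.4.3} from Theorem~\ref{th.4.1} by passing to the double cover $\mathbb{S}^n\to{\mathbb R}{\mathbf P}^n$. The covering $p\colon T^+\mathbb{S}^n\to T^+{\mathbb R}{\mathbf P}^n$ is induced by the deck transformation of $G/\mathrm{SO}(n)\to G/\mathrm{O}(n)$, i.e.\ by the right translation $r_s$ on $G/\mathrm{SO}(n)$ with $s$ a fixed representative of the nontrivial class of $\mathrm{O}(n)/\mathrm{SO}(n)$ (embedded in $G=\mathrm{SO}(n{+}1)$); denote by $\tau$ the resulting free involution of $T^+\mathbb{S}^n$. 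Since $r_s$ commutes with the left $G$-action, $\tau$ is $G$-equivariant, and since $\mathrm{SO}(n)^\bbC$ is the identity component of $\mathrm{O}(n)^\bbC$, the induced covering $G^\bbC/\mathrm{SO}(n)^\bbC\to G^\bbC/\mathrm{O}(n)^\bbC$ is holomorphic, so $p^*J^{K'}_c=J^K_c$ and $\tau$ is biholomorphic for $J^K_c$. Hence a $G$-invariant Kähler structure $(\omega,J^K_c)$ on $T^+\mathbb{S}^n$ is the $p$-pullback of a (necessarily $G$-invariant Kähler, and Ricci-flat when $\omega$ is, since $p$ is locally an isometric biholomorphism) structure on $T^+{\mathbb R}{\mathbf P}^n$ if and only if $\tau^*\omega=\omega$; conversely every $G$-invariant Ricci-flat Kähler structure on $T^+{\mathbb R}{\mathbf P}^n$ arises this way (cf.\ the remark preceding the corollary). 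So it suffices, using the explicit forms obtained in the proof of Theorem~\ref{th.4.1}, to decide when those structures are $\tau$-invariant. In the model $G\times_K\fm^R$ one checks $\phi^{-1}\circ dr_s\circ\phi\colon[(g,w)]\mapsto[(gs,\Ad_{s^{-1}}w)]$, so $\tau$-invariance of $\omega^R$ is equivalent to invariance of its pullback $\widetilde\omega^R$ to $G\times\fm^R$ under the transformation sending $(g,w)$ to $(gs,\Ad_{s^{-1}}w)$ and a left-invariant direction $\xi$ to $\Ad_{s^{-1}}\xi$; note also that $\tau$ fixes the radial coordinate $\bfr=\|w\|$.

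For $G/K\in\{\mathbb{S}^n(n\geqslant3)\}$ this is immediate: by the proof of Theorem~\ref{th.4.1} the form $\widetilde\omega^R=\Delta$ is given by~\eqref{eq.4.27}, which is assembled only from the $\Ad(G)$-invariant scalar product $\langle\cdot,\cdot\rangle$, the bracket $[\cdot,\cdot]$, the radial function $f'(\bfr)$ and the vector $w$ itself, and contains no fixed reference vector; therefore $\Delta$ is invariant under every $\Ad_{s^{-1}}$, in particular under $\tau$. Thus each $G$-invariant Ricci-flat Kähler structure $({\mathbf g}(C,C_1),J^K_c)$ on $T^+\mathbb{S}^n$ descends to $T^+{\mathbb R}{\mathbf P}^n$.

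For $n=2$ one has $\mathbb{S}^2={\mathbb C}{\mathbf P}^1$ and $\fg\cong\mathfrak{su}(2)\cong\mathfrak{so}(3)$, and by the proof of Theorem~\ref{th.4.1} the form $\widetilde\omega^R=\Delta$ is~\eqref{eq.4.31} with $n$ replaced by $1$ (so $Z_1=0$). Take $s=\diag(-1,-1,1)\in\mathrm{SO}(3)$; a direct computation of $\Ad_s$ on $\fg$ gives $\Ad_sX=X$ and $\Ad_s(IX)=-IX$, whence $\Ad_sZ_0=-Z_0$ and $\Ad_s\circ I\circ\Ad_s^{-1}=\ad_{\Ad_sZ_0}|_\fm=-I$ (equivalently, $\tau$ reverses the complex structure of the base ${\mathbb C}{\mathbf P}^1$, as the antipodal map of $\mathbb{S}^2$ does). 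Now~\eqref{eq.4.31} is the sum of the terms not containing an explicit factor $c_Z$---which form exactly the right-hand side of~\eqref{eq.4.27} and are thus $\tau$-invariant by the previous paragraph---and of the remaining $c_Z$-terms; each of the latter contains $Z_0$ or the operator $I$ linearly, so by the two sign relations above each acquires a factor $-1$ under $\tau^*$. Hence $\tau^*\Delta=\Delta-2(\text{sum of the }c_Z\text{-terms})$, and since these terms do not vanish identically for $c_Z\neq0$, we obtain $\tau^*\omega=\omega$ if and only if $c_Z=0$. This settles the first two assertions of the corollary.

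It remains to treat the extension and completeness. The involution $\tau$ extends smoothly and freely over the zero section of $T\mathbb{S}^n$, where it is the lift $(v,u)\mapsto(-v,-u)$ of the antipodal map, so $T{\mathbb R}{\mathbf P}^n=T\mathbb{S}^n/\tau$; hence a smooth $G$-invariant Ricci-flat Kähler extension over $T\mathbb{S}^n$ (which is automatically $\tau$-invariant, by continuity from $T^+\mathbb{S}^n$) descends to such a structure on $T{\mathbb R}{\mathbf P}^n$, and conversely any smooth extension over $T{\mathbb R}{\mathbf P}^n$ pulls back to one over $T\mathbb{S}^n$. By Theorem~\ref{th.4.1} the $T\mathbb{S}^n$-extension exists, and is then complete, if and only if $C_1=0$; and completeness passes in both directions through the finite Riemannian covering $T\mathbb{S}^n\to T{\mathbb R}{\mathbf P}^n$ (a geodesic extends downstairs if and only if any lift of it extends upstairs). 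This gives the last assertion. The step demanding the most care is the $n=2$ computation: pinning down the representative $s$, evaluating $\Ad_s$ on $X$, $Z_0$ and $I$, and verifying that the resulting signs single out precisely the $c_Z$-terms of~\eqref{eq.4.31}; the remaining arguments are routine covering-space considerations.
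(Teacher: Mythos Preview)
Your proof is correct and follows essentially the same route as the paper's: both reduce the question to right $K_1$-invariance (equivalently, invariance under your deck transformation $\tau$) of the form $\widetilde\omega^R=\Delta$ on $G\times\fm^R$; both observe that for $n\geqslant 3$ the expression~\eqref{eq.4.27} is built entirely from $\Ad(G)$-invariant data and is hence automatically $\mathrm{O}(n)$-invariant, and that for $n=2$ the obstruction is exactly the presence of the fixed vector $Z_0$ (equivalently the operator $I$) in the $c_Z$-terms of~\eqref{eq.4.31}. Your treatment is somewhat more explicit than the paper's---you spell out the biholomorphy of $\tau$, the sign computation for $\Ad_s$ on $Z_0$ and $I$, and the covering-space passage for extension and completeness, all of which the paper compresses into a sentence or two---but the substance is the same.
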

\begin{proof}
We will use the notations of the proof of Theorem~\ref{th.4.1}.
As it follows from its proof the
K\"ahler structure $({\mathbf g}(C,C_1), J_c^K)$
on $T^+(G/K)=T^+(\mathrm{SO}(n{+}1)/\mathrm{SO}(n))$
$(n\geqslant 3)$ is $\bbZ_2$-invariant if and only if the form
$\tio^R=\Delta$ (see~(\ref{eq.4.27})) on $G\times\fm$
is right $K_1$-invariant, where $K_1=\mathrm{O}(n)$
($K\subset K_1\subset G$). The form $\tio^R$ is
right $K_1$-invariant because $\Ad(K_1)(\fm)=\fm$
and $\Ad(K_1)$ is a subgroup of the group of inner
automorphisms $\Ad(G)$ of $\fg$. Similarly,
if $n=2$, the form $\tio^R=\Delta$ (see~(\ref{eq.4.31})) is
right $K_1$-invariant if and only if
$c_Z=0$ because $\Ad(K_1)Z\ne \{Z\}$ ($Z_0=Z$ if $n=2$
and ${\mathbb R}{\mathbf P}^2$ is not a homogeneous
complex manifold). Now the last assertion
of the corollary immediately follows from the last assertion
of Theorem~\ref{th.4.1}.
\end{proof}

\end{document}